\newcommand*{\declarecommand}{%
  \@star@or@long\declare@command
}
\newcommand*{\declare@command}[1]{%
  \provide@command{#1}{}%
  \renew@command{#1}%
}
\declarecommand{\x}{{\mathbf{x}}}
\declarecommand{\y}{{\mathbf{y}}}
\declarecommand{\z}{{\mathbf{z}}}
\declarecommand{\r}{{\mathbf{r}}}
\declarecommand{\n}{{\mathbf{n}}}
\declarecommand{\u}{{\mathbf{u}}}
\declarecommand{\v}{{\mathbf{v}}}
\declarecommand{\w}{{\mathbf{w}}}
\declarecommand{\f}{{\mathbf{f}}}
\declarecommand{\c}{{\mathbf{c}}}
\declarecommand{\bom}{{\boldsymbol{\omega}}}
\declarecommand{\X}{{\mathbf{X}}}
\declarecommand{\XE}{\X_\text{ext}}
\declarecommand{\XI}{\X_\text{int}}
\declarecommand{\tX}{\tilde\X}
\declarecommand{\tx}{\tilde x}
\declarecommand{\ty}{\tilde y}
\declarecommand{\btau}{{\boldsymbol{\tau}}}
\declarecommand{\bsig}{{\boldsymbol{\sigma}}}
\declarecommand{\bSig}{{\boldsymbol{\Sigma}}}
\declarecommand{\bgamma}{{\boldsymbol{\gamma}}}
\declarecommand{\S}{\mathcal{S}}
\declarecommand{\D}{\mathcal{D}}
\declarecommand{\SG}{\S_\Gamma}
\declarecommand{\DG}{\D_\Gamma}
\declarecommand{\G}{\mathcal{G}}
\declarecommand{\grad}{\nabla}
\declarecommand{\Id}{\mathbb{I}}
\newcommand{\bi}{\begin{itemize}}
\newcommand{\ei}{\end{itemize}}
\newcommand{\ben}{\begin{enumerate}}
\newcommand{\een}{\end{enumerate}}
\newcommand{\be}{\begin{equation}}
\newcommand{\ee}{\end{equation}}
\newcommand{\bea}{\begin{eqnarray}} 
\newcommand{\eea}{\end{eqnarray}}
\newcommand{\ba}{\begin{align}} 
\newcommand{\ea}{\end{align}}
\newcommand{\bse}{\begin{subequations}} 
\newcommand{\ese}{\end{subequations}}
\newcommand{\bc}{\begin{center}}
\newcommand{\ec}{\end{center}}
\newcommand{\bfi}{\begin{figure}}
\newcommand{\efi}{\end{figure}}
\newcommand{\ca}[2]{\caption{#1 \label{#2}}}
\newcommand{\ig}[2]{\includegraphics[#1]{#2}}
\newcommand{\bmp}[1]{\begin{minipage}{#1}}
\newcommand{\emp}{\end{minipage}}
\newcommand{\tbox}[1]{{\mbox{\scriptsize #1}}}
\newcommand{\mbf}[1]{{\mathbf #1}}
\newcommand{\half}{\mbox{\small $\frac{1}{2}$}}
\newcommand{\vt}[2]{\left[\begin{array}{r}#1\\#2\end{array}\right]} 
\newcommand{\R}{\mathbb{R}}
\newcommand{\Z}{\mathbb{Z}}
\newcommand{\C}{\mathbb{C}}
\newcommand{\ep}{\epsilon}            
\newcommand{\bigO}{{\mathcal O}}
\DeclareMathOperator{\re}{Re}
\DeclareMathOperator{\im}{Im}
\newtheorem{thm}{Theorem}
\newtheorem{lem}[thm]{Lemma}
\newtheorem{pro}[thm]{Proposition}
\newtheorem{rmk}[thm]{Remark}
\newcommand{\pO}{{\partial\Omega}}
\newcommand{\Oc}{{\R^d\backslash\overline{\Omega}}}   
\newcommand{\emach}{\epsilon_\tbox{\rm mach}}
\newcommand{\gim}{\gamma^\tbox{\rm imag}}
\newcommand{\ta}{\tilde\alpha}           
\newcommand{\tb}{\tilde\beta}
\newcommand{\Nc}{M}                  
\declarecommand{\T}{{\mathbf{T}}}
\declarecommand{\bphi}{{\boldsymbol{\phi}}}
\declarecommand{\bpsi}{{\boldsymbol{\psi}}}
\declarecommand{\ddist}{{d_{\tbox{\rm min}}}}        
\declarecommand{\cnoise}{{\boldsymbol{\eta}}}
\begin{document}

\author{David~B.~Stein \and Alex~H.~Barnett}

\institute{David B. Stein \at
  Center for Computational Biology, Flatiron Institute, New York, NY 10010, USA
  \email{dstein@flatironinstitute.org}
\and
Alex H. Barnett \at
Center for Computational Mathematics, Flatiron Institute, New York, NY 10010, USA}

\title{Quadrature by fundamental solutions: kernel-independent layer potential evaluation for large collections of simple objects}

\maketitle

\begin{abstract}
  Well-conditioned boundary integral methods for the solution of elliptic
  boundary value problems (BVPs) are powerful tools for static and dynamic physical simulations.
  %
  When there are many close-to-touching boundaries (eg, in complex fluids) or when the solution is needed in the bulk, nearly-singular integrals must be evaluated at many targets.
  %
  We show that precomputing a linear map from surface density to an
  {\em effective source} representation
  renders this task highly efficient,
  in the common case where each object is ``simple'', ie, its smooth boundary needs only moderately many nodes. 
  %
  We present a kernel-independent method needing
  only an {\em upsampled smooth} surface quadrature,
  and one dense factorization, for each distinct shape.
  No (near-)singular quadrature rules are needed.
  The resulting effective sources are drop-in compatible with
  fast algorithms, with no local corrections nor bookkeeping.
  Our extensive numerical tests include
  2D FMM-based Helmholtz and Stokes BVPs
  with up to 1000 objects (281000 unknowns),
  and a 3D Laplace BVP with 10 ellipsoids separated by $1/30$ of a
  diameter.
  We include a rigorous analysis for analytic data in 2D and 3D.
\end{abstract}

\keywords{Boundary integral equations \and Singular quadrature\and Near-singular quadrature\and Nystr\"om \and method of fundamental solutions \and Fluid dynamics}

\subclass{45A05 \and 35C15 \and 35J25 \and 76S05} 

\section{Introduction}
\label{section:introduction}

Boundary integral equations (BIEs)
are advantageous for the numerical solution of a wide variety of
linear
boundary-value problems (BVPs) in science and engineering
\cite{LIE,HW}.
They include electro/magnetostatics \cite{Moura94,yingbeale},
acoustics \cite{rokh83,kress91},
electromagnetics/optics \cite{coltonkress,CMS,laiaxi},
elastostatics/dynamics \cite{helsingbigelasto,chaillat08},
viscous fluid flow \cite{yanplatform,quaife2021hydrodynamics,sinha2016shape,nazockdast2017fast,nazockdast2017cytoplasmic},
electrohydrodynamics \cite{sorgentone2021numerical},
and many others.
BIEs also form a component in solvers for BVPs
with volume driving and/or nonlinearities
by solving for a homogeneous PDE solution which corrects
the boundary conditions \cite{mayo84,biros04,fryklund2018partition,fryklund2020integral,ludvig_nufft,young2021many}.
``Fast'' (quasi-linear scaling)
algorithms to apply the resulting discretized operators,
such as the fast multipole method (FMM) \cite{lapFMM,fmm1,CMS},
have revolutionized the size of problems that can be tackled \cite{pvfmm}.
More recently, fast direct solvers have enabled large gains when iterative
solution is inefficient \cite{hackbusch,gunnarbook,qpfds}.
Despite this progress, the issue of efficient and accurate discretization
of BIEs in complex geometries persists.
In this work we present a new tool to address this in the
common case of a large number of simple, possibly close-to-touching objects,
as can arise in numerical homogenization, porous media,
and complex fluids.

For example, and to fix notation,
let $\Omega$ be either one bounded obstacle or the union of many such
obstacles in $\R^d$,
let $L$ be a linear constant-coefficient 2nd-order
elliptic differential operator, and consider solving the BVP
\bea
{\cal L} u &=& 0 \qquad \mbox{ in } \R^d \backslash \overline{\Omega}
\label{pde}
\\
u&=& f \qquad \mbox{ on } \pO
\label{bc}
\eea
with an appropriate decay or radiation condition imposed on
$u(\x)$ as $\|\x\|\to\infty$.
In terms of the translationally invariant fundamental solution
(free space Green's function) for $\cal L$, denoted by
$G(\x,\y) = G(\x-\y)$, for $\x,\y \in \R^d$,
a common {\em layer potential} representation for the solution is,
in the scalar case,
\be
u(\x) = [(\alpha \S + \beta \D) \tau](\x) :=
\int_\pO \left( \alpha G(\x,\y) + \beta \frac{\partial G(\x,\y)}{\partial \n_\y}
\right) \tau(\y) ds_\y,
\quad \x \in \Oc.
\label{rep}
\ee
Here $ds_\y$ is the arc or surface element, $\n_\y$ the unit outward normal
at $\y\in\pO$, and the formula
defines the single-layer $\S$, and (again in the scalar case only)
double-layer $\D$ potentials.
The constants $\alpha$ and $\beta$ are given.
The unknown {\em density} function $\tau$ lives on $\pO$ and is found by solving
a so-called {\em indirect} BIE derived from \eqref{rep} by taking the exterior limit
$\x\to\pO$, using jump relations \cite{HW,coltonkress}.
In the case of Dirichlet boundary conditions, this BIE is
\be
   [\alpha S + \beta(\half I + D)] \tau = f
   ~,
   \label{bie}
\ee
where $f$ is given boundary data, and $S$ and $D$ are the principal
value {\em boundary integral operators} resulting by restricting $\S$ and $\D$
to $\pO$.
Note that, since (at least for $\pO$ smooth) $S$ and $D$ are compact,
for $\beta \neq 0$ the BIE is of Fredholm 2nd-kind;
in general $\alpha$ and $\beta$ are chosen to give this property
and to give a unique solution \cite{HW,atkinson}.

Despite this elegant framework,
in practice there remain two challenging tasks:
\ben
\item 
high-order accurate discretization of \eqref{bie},
meaning filling (or, for large problems, merely applying)
the $N \times N$ Nystr\"om matrix in a linear system
\be
A \btau = \mbf{f} ~,
\label{linsys}
\ee
which approximates \eqref{bie};
and
\item 
numerical evaluation of \eqref{rep} at target points
$\x$ including those arbitrarily close to $\pO$,
given the solution $\btau \in \C^N$ to \eqref{linsys}.
\een
Much of the difficulty of both tasks originates in the {\em singularity}
in $G(\x-\y)$ as $\x\to\y$.
Their troublesome nature for various kernels,
especially in 3D ($d=3$) and/or complex geometries,
is indicated by the large number of methods,
and its active growth as a research area
(briefly reviewed in Section~\ref{s:prior}).

Task 2 arises especially frequently in fluid simulations containing many interacting bodies, e.g. in blood and vesicular flow or sedimentation problems.
For either rigid or deformable particles
BIE solutions (or simpler hydrodynamic layer-potential evaluations \cite{ves2d})
are typically needed at every time-step, and regularizations \cite{sinha2016shape} or special near-field quadrature schemes must be used to maintain fidelity (see \Cref{s:prior}).
When non-Newtonian rheology arises, as in complex and active fluids, continuum models typically track extra stress or orientation fields \cite{saintillan2018rheology} in the bulk whose evolution requires knowledge of both hydrodynamic velocities and stresses.
Due to the difficulty of BIEs, simulations of such complex fluid
and active matter systems have instead primarily been done using finite/spectral element methods and cut cell methods in stationary geometries \cite{owens1996steady,theillard2017geometric}, and 
regularized methods \cite{cortez2001method,peskin2002immersed,li2019orientation} in moving geometries.
In certain cases such regularized methods are known to give inaccurate results, with nontrivial corrections required to ensure convergence \cite{krishnan2017fully,stein2019convergent}.
Thus robust methods for BIE with many near-boundary targets can enable complex fluid simulations in regimes that are currently hard to access.

In many applications the number of nodes needed on each distinct boundary
is ``small'' (at most a few thousand, in either 2D or 3D).
In this case dense, linear algebraic methods with $\bigO(N^3)$ cost
are practical for per-object precomputations.
This handles the diagonal (self-interaction) blocks of $A$;
an FMM, followed by local corrections,
may then apply its off-diagonal blocks. 
Focusing on Nystr\"om discretizations \cite[Ch.~12]{LIE} \cite{coltonkress}
for indirect BIEs, we exploit this idea to propose a simple but
efficient new approach to both tasks 1 and 2,
that is in large part kernel- and dimension-independent,
and furthermore is already in use \cite{young2021many}.
Its kernel-independence allows easy switching between PDEs,
or to axisymmetric, periodic, or multilayer Green's functions.
It is essentially automated in 2D, but requires parameter adjustment in 3D.
A key advantage at the implementation level is
that a {\em single} FMM-compatible representation covers
on-surface, near-surface, and far-field,
bypassing the bookkeeping that complicates high-performance codes
\cite{yanplatform}.

\bfi 
\ig{width=\textwidth}{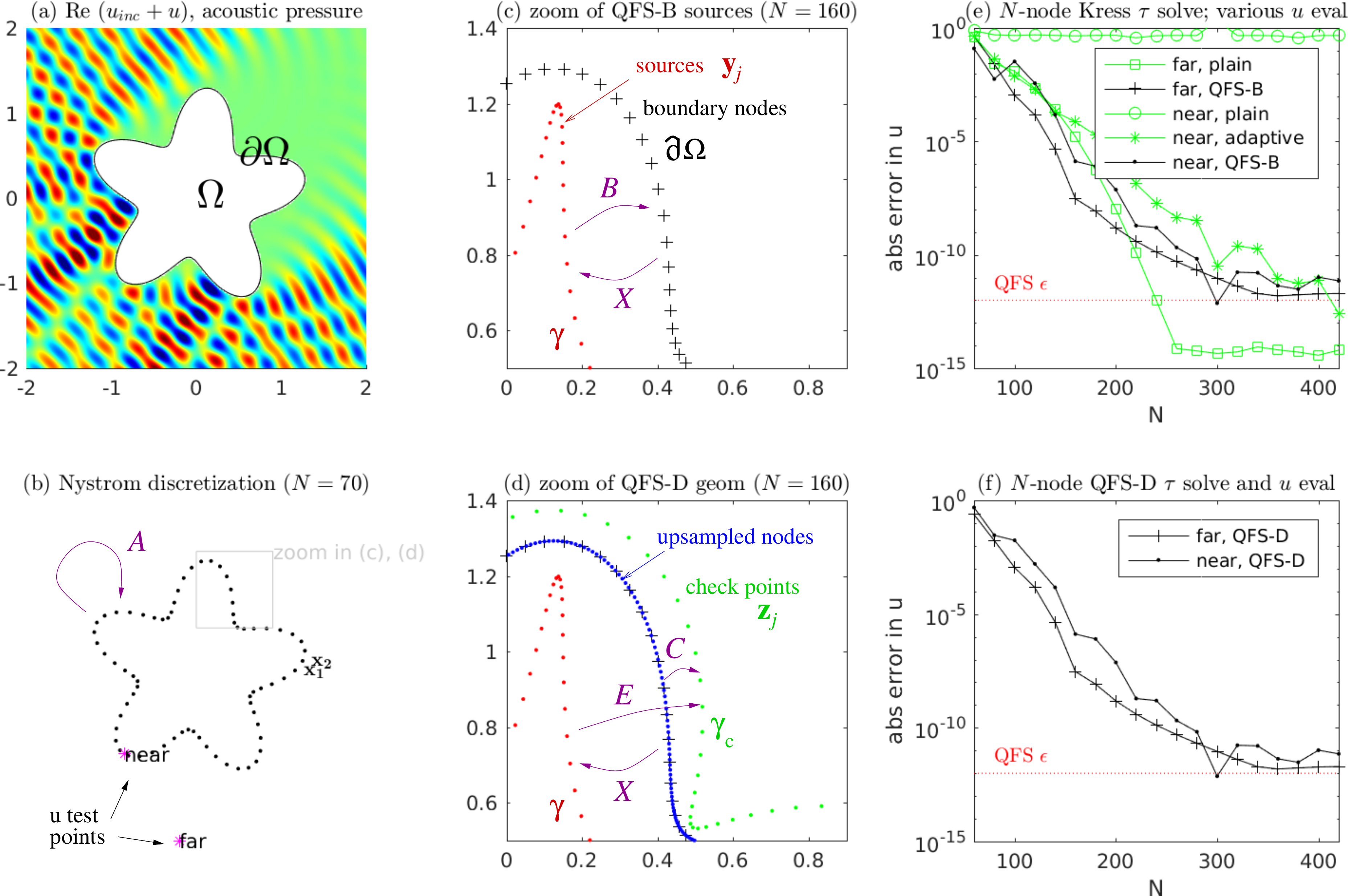}
\ca{Overview of proposal applied to an exterior Dirichlet Helmholtz 
  (sound-hard) scattering BVP from $\pO$
  parameterized in polars by $r(t) = 1 + 0.3\,\cos(5t + 0.2)$,
  that is, $\x(t) = (r(t) \cos t, r(t) \sin t)$.
  (a) Incident wave (from bottom left at $\theta=\pi/5$, wavenumber $k=20$) plus scattered wave (BVP solution $u$).
  (b) Discretization nodes (indicating action of Nystr\"om matrix $A$), and ``far'' and ``near'' test targets.
  (c) QFS-B proxy sources (red dots) and boundary nodes ($+$ symbols)
  (d) QFS-D proxy sources (red dots), check points (green dots), and
  upsampled boundary nodes (blue dots).
  (e) Errors in $u$ for various evaluation methods, having
  solved the density vector using Kress quadrature for $A$.
  (f) The full ``desingularized'' scheme: errors in $u$, having first
  solved the density using $A$ as filled via QFS-D.
  Note that the black curves converge at comparable rates to the green ones
  down to the requested tolerance $\ep$.
}{f:setup}
\efi 

Let us sketch the basic proposal in a simple 2D exterior acoustic frequency-domain scattering (Helmholtz) Dirichlet BVP, with a single boundary curve $\pO$; see Fig.~\ref{f:setup}.
The incident plane wave has wavenumber $k$, and the resulting
scattered wave $u$ solves the BVP with data the negative of
this incident wave on $\pO$.
This ensures that their sum (the physical solution shown in panel (a))
has zero Dirichlet boundary data.
In the representation \eqref{rep}, $\alpha = -ik$, $\beta=1$
(the usual ``combined field'' or CFIE \cite{coltonkress}), and $G(\r) = (i/4)H^{(1)}_0(k\|\r\|)$,
where $H_0^{(1)}$ is the Hankel function of the first kind.
Panel (b) shows the plain $N$-node periodic trapezoid rule (PTR)
quadrature used on $\pO$.

The main idea---which we call quadrature by fundamental solutions (QFS)---is to place roughly $N$ effective or proxy sources
a controlled distance from $\pO$ on its {\em non-physical}
(interior) side,
whose strengths are chosen to approximate the desired potential
\eqref{rep}, both on $\pO$
and throughout the solution domain $\Oc$.
We precompute a ``source-from-density'' matrix $X$ mapping any
smooth boundary
density sample vector $\btau$ to an equivalent proxy strength vector.
This matrix equation for $X$ is solved densely in a backward-stable fashion
by {\em collocation} (matching) of the potential,
either on the surface $\pO$ (as in panel (c)),
or on a nearby set of ``check points''
a controlled distance from $\pO$ but on the {\em physical} side (panel (d)).
Armed with $X$, given any density $\btau$
the desired potential \eqref{rep}
is well approximated by a sum over the proxy sources with strengths $\bsig= X\btau$.
This applies for targets $\x$ far from $\pO$, arbitrarily near to $\pO$, or on $\pO$ (the exterior surface limit),
and is compatible with the FMM.
This addresses task 2 above.
Furthermore, by filling the evaluation matrix $B$
(see panel (c)) from proxy sources to the desired data type (trace) on $\pO$,
then the product $BX$ is a good approximation to $A$, the Nystr\"om matrix in \eqref{linsys}, completing task 1.

In Fig.~\ref{f:setup}(e) we show convergence of potential
evaluation (task 2) for two target points (plotted in panel (b)), given a density $\btau$ already solved using the
Kress scheme \cite{kress91} generally considered a ``gold standard'' \cite{hao,helsing_helm}.
For the easy case of a far target, our QFS proposal has a similar
convergence rate as the plain PTR, down to the requested tolerance
of $\ep=10^{-12}$.
For a near target (a distance $10^{-4}$ from $\pO$, where the plain PTR of course
fails dismally), the same QFS scheme
has similar convergence to the expensive gold-standard method
of adaptive Gaussian quadrature applied to the trigonometric polynomial
interpolant of the density, again down to $\ep$.
In Fig.~\ref{f:setup}(f) we use QFS both to
fill the Nystr\"om matrix $A$ and for
potential evaluation (combining tasks 1 and 2);
again we see similar convergence.

\begin{rmk}[MFS]
  The idea of representing homogeneous PDE solutions by Green's function
  sources near the boundary has a 50-year history in the engineering community \cite{Kupradze67,doicu},
  being called the {\em method of fundamental solutions} \cite{Bo85,mfs,acper}, method of auxiliary sources \cite{fridon},
  charge simulation method \cite{Ka89,Ka96}, 1st-kind integral equations
  \cite{kangro2d,kangro3d,gonzalez09}, rational approximation \cite{hochmancorner,lightning}, etc.
  It is well known to produce exponentially ill-conditioned linear systems.
  Recently, similar ``proxy point'' ideas flourished in fast direct solvers \cite{gunnarbook},
  kernel-independent FMMs \cite{pvfmm}, and BIE quadrature \cite{qbkix}.
  Our novelty here is to use {\em off-surface} collocation
  to make a black-box general layer-potential evaluator tool,
  which can be inserted, for example, into standard {\em well-conditioned} 2nd-kind BIE frameworks.
\end{rmk}

The method's simplicity and MFS flavor
restricts
the body shapes to which it may be accurately applied.
This arises essentially from the need that the density $\tau$ and data $f$ be
smooth on the local node-spacing scale $h$.
Yet, this is also true for (non-adaptive) BIE quadrature schemes generally.
If smooth objects become extremely close ($\bigO(h^2)$ or closer),
{\em adaptive} surface quadratures are essential to capture $\tau$
\cite{helsingtut,wu2019,hedgehog}, a problem beyond even tasks 1 and 2.
We will not address adaptivity, since many BIE
applications use hand-tuned non-adaptive quadratures.
We target simulations involving simple bodies, but a large number of them,
hence we test only global quadratures,
leaving panel quadratures for the future.
We note that the MFS can also handle 2D corner domains
using a moderate number of clustered sources
\cite{hochmancorner,larrythesis,lightning}.



We structure the rest of the paper as follows.
Section~\ref{s:th} is a general description of QFS for evaluation
in exterior domains, and proves
(along with Appendix~\ref{a:mfs}) robustness criteria
for Laplace, Helmholtz, and Stokes PDEs in 2D and 3D.
The respective BVPs and fundamental solutions are also reviewed.
Section~\ref{s:2d} presents implementations in 2D of the two variants:
QFS-B (Section~\ref{s:qfs-b}) uses $\pO$ as the check curve,
while QFS-D (Section~\ref{s:qfs-d}) uses a displaced check curve.
The other subsections supply 2D convergence theory, and one-body
numerical tests.
Section~\ref{s:big2d} gives performance
tests of FMM-accelerated QFS-D for large-scale 2D Helmholtz and Stokes BVPs
(via geometry generation in Appendix~\ref{a:geometry}).
Section~\ref{s:3d} presents a preliminary 3D Laplace test involving
ellipsoids.
We draw conclusions in Section~\ref{s:conc}.

We host a Python implementation of QFS at
\url{https://github.com/dbstein/qfs}\\
MATLAB codes for some of 2D and 3D tests are also to be found at\\
\url{https://github.com/ahbarnett/QFS}

\subsection{Prior work on singular and near-singular BIE quadratures
for smooth boundaries}
\label{s:prior}

Here we give a brief and incomplete review of the
large literature on high-order Nystr\"om quadratures in 2D and 3D.
For background we suggest \cite{LIE,coltonkress,atkinson,CMS,gunnarbook,hao}.
(We do not address Galerkin discretizations, which have similar challenges.)

We first highlight some methods for task 1: filling $A$.
The density $\tau$ on $\pO$ is represented by an {\em interpolant}
from $\btau$, its samples at nodes.
Nystr\"om's original method \cite[Ch.~12.2]{LIE}
uses the kernel itself as interpolant, but this only applies to smooth kernels
(double-layers for zero-frequency PDEs in 2D).
Other cases need accurate integration of
the product of each interpolatory basis function
(which may be {\em global} or {\em panel-based})
with the weakly-singular kernel.
For 2D Helmholtz, Kress \cite{kress91} proposed a global
product quadrature, which needs analytic insight to split off the
logarithmically-singular part;
an analogous 3D product quadrature uses spherical harmonics \cite{ganesh,sorgentone18}.
Other 2D and 3D approaches include
local weight corrections of the existing grid \cite{kapur,zeta2d,zeta3d},
and interpolating to custom {\em auxiliary quadrature nodes}
\cite{alpert,laiaxi},
where in 3D a local polar transformation can remove the singularity
\cite{bruno01,ying06,bremer3d,gimbutasgrid}.

We turn to task 2: evaluation near the boundary $\pO$.
Here, in 2D and 3D, upsampling of a plain global rule
gets accuracy nearer to $\pO$
\cite{atkinson,ying06}, but cannot approach $\pO$ \cite{ce}.
Per-target {\em local} upsampling can be very efficient in 3D \cite{fmmbie3d}.
In 2D, Cauchy's theorem is a powerful tool,
either globally via barycentric evaluation \cite{helsing_close,lsc2d},
or via panel monomial bases \cite{helsing_close,helsing_helm,wu2019}.
The idea of extrapolation towards $\pO$ from near-surface data
evaluated by an upsampled plain rule underpins
quadrature by expansion \cite{qbx,ce,walaqbx2d,qbkix,klintporous}
and ``hedgehog'' \cite{hedgehog} schemes,
as it does our proposal.
Other approaches include
density interpolation via Green's theorem \cite{perezpw3d},
regularization \cite{beale}, and asymptotics \cite{khatri2d}.
Yet, for the exterior of the sphere, {\em uniform analytic
expansions} of the potential are available \cite{coronasphere,yanplatform};
the wish to extend this to general shapes inspired this work.

Finally, we note the interplay between the two tasks:
on-surface evaluation can aid with task 2,
while
many of the above off-surface methods can be, and are in practice,
applied to task 1.
The latter will also be true for our proposal.



\section{Description of QFS and theoretical background for three PDEs}
\label{s:th}

We present two variants of ``quadrature by fundamental solutions'',
each of which can evaluate layer potentials at targets far from, near to, or on, $\pO$:
\ben
\item
  QFS-B: 
  The boundary $\pO$ itself is used as the check surface, which requires
  the user to supply a Nystr\"om (on-surface self-interaction) matrix $A$. (In the name, ``B'' stands for boundary.)
\item
  QFS-D: A new check surface is used on the opposite side of $\pO$ from the proxy sources, thus the scheme is fully ``desingularized'' (hence ``D''). Only a smooth {\em upsampling} scheme on $\pO$ is needed.
  In addition it provides a method to fill $A$
  (task 1) without singular on-surface quadratures.
  \een
Both schemes have utility in applications;
if the $A$ matrix is already available then QFS-B is more convenient.

Given a density $\tau$ on a boundary $\pO$, and
desired layer potential representation
\eqref{rep} for $u$ in the exterior of $\Omega$,
both variants of QFS use new layer potentials
placed on $\gamma\in\Omega$, an auxiliary
closed curve in $d=2$ or surface in $d=3$,
\be
\tilde u(\x) \;\approx\; [(\ta \S_\gamma + \tb \D_\gamma) \sigma](\x)~,
\qquad \x \in \R^d \backslash \Omega~.
\label{qfsrep}
\ee
Here $\S_\gamma$ and $\D_\gamma$ denote single- and double-layer potentials
on $\gamma$, and the
QFS mixing parameters $(\ta,\tb)$ are generally distinct from
$(\alpha,\beta)$ in \eqref{rep}.
To solve for the QFS source function $\sigma$,
one collocates on a check curve (or surface) $\gamma_c=\pO$
(for QFS-B), or $\gamma_c$ exterior to and enclosing $\overline\Omega$
(for QFS-D; see Fig.~\ref{f:setup}(d)).
The desired Dirichlet data to match, which we call $u_c$, is given simply by evaluating the user-supplied potential,
\be
u_c := [(\alpha \S + \beta \D) \tau]|_{\gamma_c}~.
\label{uc}
\ee
For now we specialize to QFS-B where $\gamma_c=\pO$, so that
care must be taken to use the exterior limit (jump relation), giving
\be
u_c = [\alpha S + \beta(\half I + D)]\tau
\hspace{.5in} \mbox{ (matching data, continuous QFS-B case). }
\label{ucb}
\ee
Equating \eqref{qfsrep} to $u_c$ on $\gamma_c$
then gives the {\em first-kind integral equation} for $\sigma$,
\be
\int_\gamma \left( \ta G(\x,\y) + \tb \frac{\partial G(\x,\y)}{\partial \n_\y}
\right) \sigma(\y) ds_\y
\; = \;
u_c(\x)~, \qquad \x\in\gamma_c
\label{FKIE}
\ee
where for simplicity for now we use notation for the DLP valid only for
scalar PDEs.

We discretize \eqref{FKIE} by
applying quadrature on $\gamma$ with source nodes $\{\y_j\}_{j=1}^P$,
and discrete collocation on $\gamma_c=\pO$ at the user-supplied nodes
$\{\x_i\}_{i=1}^N$, to get the $N\times P$ linear system
\be
\sum_{j=1}^P \biggl[ \ta G(\x_i,\y_j) + \tb \frac{\partial G(\x_i,\y_j)}{\partial \n_{\y_j}} \biggr] \sigma_j
\;=\;
u_c(\x_i)~, \qquad i=1,\dots,N~.
\label{qfssys}
\ee
The right-hand side vector is given by
$\u_c := \{u_c(\x_i)\}_{i=1}^N \approx A\btau$,
where $A$ is the user-supplied exterior-limit Nystr\"om matrix,
and $\btau := \{\tau(\x_i)\}_{i=1}^N$ the user-supplied density vector.
Note that quadrature weights on $\gamma$ could be included;
here for simplicity we left them implicit in $\sigma_j$.
The linear system \eqref{qfssys} needs a direct solution, due to its poor conditioning, to get
$\bsig:=\{\sigma_j\}_{j=1}^P$.
Finally, the discretization of \eqref{qfsrep},
\be
\tilde{u}(\x) := \sum_{j=1}^P \biggl[ \ta G(\x,\y_j) + \tb \frac{\partial G(\x,\y_j)}{\partial \n_{\y_j}} \biggr] \sigma_j~,
\qquad \x \in \R^d \backslash \Omega~,
\label{qfsu}
\ee
defines our approximate QFS evaluation method for $u$ at all target points $\x$.
This completes the simplest mathematical description.

\subsection{Analysis of continuous QFS
  for the exterior Laplace case}
\label{s:thL}

While appealing, the above proposal raises questions:
What source curve/surface $\gamma$
and mixing parameters $(\ta,\tb)$ should be chosen? Is the
choice to match Dirichlet data on $\gamma_c$ robust?
We first give theoretical results
in the continuous case for the Laplace PDE, exterior case,
covering both $d=2$ (which has a curious twist) and $d=3$,
then distill into criteria for more general elliptic PDE.


Recall that the exterior Laplace Dirichlet BVP is,
given $f\in C(\pO)$ and, in $d=2$ also a {\em total charge} $\Sigma\in\R$,
to solve for $u$ obeying
\bea
\Delta u &=& 0 \qquad \mbox{ in } \R^d \backslash \overline{\Omega}
\label{Lpde}
\\
u&=& f \qquad \mbox{ on } \pO
\label{Lbc}
\\
u(\x)&=&
\left\{\begin{array}{ll}
\Sigma \log r + \omega + o(1)~, & d=2~, \\
o(1)~, & d=3~,
\end{array}
\right.
\qquad r:=\|\x\|\to\infty, \mbox{ uniformly in angle}.
\label{Sig}
\eea
This has a unique solution
(\cite[Thm.~6.24]{LIE} when $\Sigma=0$, otherwise see \cite[Sec.~1.4.1]{HW}).
In $d=2$ the {\em constant term} $\omega\in\R$,
which we emphasize is not part of the input data,
may be extracted after solution
as $\omega = \lim_{\|\x\|\to\infty}u(\x)-\Sigma \log \|\x\|$.

The subtlety in $d=2$ is that the desired
Laplace layer potentials \eqref{rep} on $\pO$,
while exterior Laplace solutions,
do {\em not} span the subspace of exterior harmonic
functions obeying \eqref{Sig}: in particular they are restricted to the
subspace with $\omega=0$.
To see this, recall the Laplace fundamental solution
\be
G(\x,\y) =
\left\{\begin{array}{ll}
\frac{1}{2\pi}\log\frac{1}{r},
& d=2, \\
\frac{1}{4\pi r},  & d=3,
\end{array}
\right.
\qquad r:=\|\x-\y\|~.
\label{LG}
\ee
Well known asymptotics \cite[(6.14-15)]{LIE}
as $r:=\|\x\|\to\infty$ mean that
any Laplace SLP with density $\tau$
has the asymptotic $C\log r + \bigO(1/r)$
in $d=2$, where $2\pi C = \int_\pO \tau$ is the total charge.
In $d=3$ the SLP is $\bigO(1/r)$.
The DLP has the bound $\bigO(1/r^{d-1})$ in $d=2,3$.
Thus a mixture \eqref{rep} has asymptotic $\alpha C\log r + o(1)$ in $d=2$,
or $o(1)$ in $d=3$.
A similar asymptotic of course holds for the QFS representation \eqref{qfsrep}.

Does matching Dirichlet data $u_c$ on $\gamma_c=\pO$
proposed in \eqref{uc}--\eqref{FKIE}
lead to a QFS approximation
$\tilde u$ equaling the correct exterior potential $u$?
In $d=3$ the answer must be yes, assuming \eqref{qfsrep} spans
all possible $u_c$, by uniqueness of the exterior Dirichlet BVP.
To handle the $d=2$ case we need
to flip the roles of $\Sigma$ and $\omega$ to consider a modified
BVP where the constant term is given (zero), but not the total charge (logarithmic growth).
The following lemma shows that this is almost always possible.

\begin{lem}[Modified exterior BVP in $d=2$]   
  Let $\Omega\subset\R^2$ be a bounded domain with
  logarithmic capacity $C_\Omega \neq 1$.
  Then
  the ``zero constant term exterior Dirichlet Laplace BVP,''
  where Dirichlet data $f$ on $\pO$
  is specified plus the decay condition $C\log r + o(1)$ as $r\to\infty$
  with $C\in\R$ unknown,
  has a unique solution.
  \label{l:0const}
\end{lem}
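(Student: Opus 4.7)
The plan is to reduce the modified BVP to two classical exterior potential-theoretic building blocks: (i) the unique bounded exterior Dirichlet solution $u_0$ with $u_0=f$ on $\pO$, whose limit at infinity $L := \lim_{r\to\infty}u_0(\x)$ is a constant fixed by $f$ via harmonic measure; and (ii) the Green's function of $\Oc$ with pole at infinity, call it $g_\infty$, characterized by being harmonic in $\Oc$, vanishing on $\pO$, and satisfying $g_\infty(\x) = \log r - \log C_\Omega + o(1)$ as $r\to\infty$. Both exist and are unique by standard theory, and the constant $-\log C_\Omega$ is precisely the definition of logarithmic capacity (as confirmed by the disk example $g_\infty(\x)=\log(\|\x\|/R)$).

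For existence, I would try the ansatz $u := u_0 + t\, g_\infty$ for a scalar $t\in\R$ to be chosen. Since $g_\infty$ vanishes on $\pO$, the boundary condition $u=f$ is inherited from $u_0$. At infinity the ansatz has asymptotic $t\log r + (L - t\log C_\Omega) + o(1)$, so imposing the zero-constant-term condition forces $t = L/\log C_\Omega$. This is finite exactly because $C_\Omega\neq 1$; the resulting $u$ then solves the modified BVP with $C=t$.

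For uniqueness, suppose $u_1,u_2$ are two solutions with coefficients $C_1,C_2$. Then $v := u_1-u_2$ is harmonic in $\Oc$, vanishes on $\pO$, and at infinity has the form $(C_1-C_2)\log r + o(1)$ with zero constant term (both $u_i$ having zero constant term by assumption). Form $w := v - (C_1-C_2)g_\infty$, which is harmonic in $\Oc$, vanishes on $\pO$, and tends to the constant $(C_1-C_2)\log C_\Omega$ at infinity; in particular $w$ is bounded. The 2D bounded exterior maximum principle for harmonic functions with vanishing boundary data then yields $w\equiv 0$, so its limit at infinity must vanish, and since $\log C_\Omega\neq 0$ this forces $C_1=C_2$; consequently $v\equiv 0$.

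The main obstacle is conceptual rather than technical: one has to realize that the usual exterior Dirichlet BVP \eqref{Lpde}--\eqref{Sig} fixes the charge $\Sigma$ and lets the constant term $\omega$ float, whereas the modified BVP swaps these two roles, producing a scalar coupling whose invertibility is exactly the condition $C_\Omega\neq 1$. A secondary care point when writing out the details is the invocation of the 2D bounded exterior maximum principle (the naive strong maximum principle on unbounded domains can mislead here); this is standard, but worth stating explicitly.
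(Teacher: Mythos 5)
Your proof is correct and is essentially the paper's argument: both decompose the solution as the bounded exterior Dirichlet solution (your $u_0$, the paper's $v$ with $\Sigma=0$) plus a scalar multiple of the Green's function of the exterior with pole at infinity (your $g_\infty$, the paper's $w$), the multiple being fixed by annihilating the constant term, which is possible precisely when $\log C_\Omega\neq 0$. The only addition is that you spell out uniqueness explicitly via the bounded exterior maximum principle, a step the paper leaves implicit; note also that your coefficient $t=L/\log C_\Omega$ is the consistent one for the normalization $g_\infty=\log r-\log C_\Omega+o(1)$, whereas the paper's displayed constant carries the opposite sign.
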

\begin{proof}
  Let $v$ solve the standard
  BVP \eqref{Lpde}--\eqref{Sig} with data $f$ and $\Sigma=0$.
  Let $\omega = v_\infty := \lim_{\|\x\|\to\infty} w(\x)$ be its constant term.
  Let $w$ solve the BVP \eqref{Lpde}--\eqref{Sig} with $f\equiv 0$
  and $\Sigma=1$; so $w$ is the Green function for $\Omega$ with a pole
  at infinity, and
  by definition $\log C_\Omega = -w_\infty := -\lim_{\|\x\|\to\infty} w(\x) + \log \|\x\|$ \cite[Sec.~4.2]{Landkofbook}.
  Note that $w_\infty$ is called the {\em Robin constant} for $\Omega$.
  If $C_\Omega\neq 1$, then
  $v - \omega w/(\log C_\Omega)$ solves the modified
  BVP stated in the Lemma, with resulting logarithmic constant
  \be
  C = -\omega/(\log C_\Omega)~.
  \label{Sigfail}
  \ee
\end{proof}
Failure when $C_\Omega=1$ can occur, as illustrated by $\pO$ the unit circle,
for which $f\equiv 0$ gives a 1-dimensional subspace
$c \log r$, $c\in\R$, of solutions to the modified BVP in the lemma.
For $f\equiv 1$, this BVP has no solution.

Armed with the above uniqueness results,
we state our main result for Laplace (proved in Appendix~\ref{a:mfs}).
It shows that: i)
apart from unit logarithmic capacity in $d=2$,
QFS-B as presented above is robust for analytic data and surfaces,
when the surface $\gamma$ is chosen appropriately;
ii) for this a pure SLP $(\ta,\tb) = (1,0)$
is sufficient as the QFS mixture.
The latter has an advantage over the obvious choice
$(\ta,\tb) = (\alpha,\beta)$, both in simplicity and numerical speed.

\begin{thm}[QFS robustness for exterior Laplace]   
  Let $u$ be a Laplace solution in $\Oc$
  with $u=u_c$ on $\pO$,
  and decay conditions
  $u(\x) = C\log r + o(1)$ for some $C$ if $d=2$ (ie, zero constant term),
  or $u(\x) = o(1)$ if $d=3$, for $r:=\|\x\| \to\infty$.
  Let $u$ also continue analytically as a regular Laplace solution throughout the closed
  annulus (or shell) between $\pO$ and a simple smooth interior surface $\gamma\subset\Omega$.
  Then the first kind integral equation
  \be
  \int_\gamma G(\x,\y) \sigma(\y) ds_\y = u_c(\x), \qquad \x\in\pO
  \label{Lmfsie}
  \ee
  has a solution $\sigma\in C^\infty(\gamma)$.
  If $d>2$, or the logarithmic capacity $C_\Omega\neq 1$, the
  solution is unique, and
  \be
  u(\x) = \int_\gamma G(\x,\y) \sigma(\y) ds_\y~, \qquad \x\in\R^d\backslash\Omega~.
  \label{Lmfsrep}
  \ee
  \label{t:qfslap} \end{thm}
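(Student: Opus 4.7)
The plan is to use the analytic-continuation hypothesis to push the collocation condition from $\pO$ onto $\gamma$, reducing the statement to a standard question about representing harmonic functions by single-layer potentials on $\gamma$. Let $D\subset\Omega$ denote the bounded region enclosed by $\gamma$, and let $U$ be the harmonic extension of $u$ to $\R^d\setminus D$ obtained via analytic continuation through the annulus (or shell) in the hypothesis. Then $U|_\gamma$ is real-analytic, and $U$ inherits the decay of $u$ at infinity, including the zero constant term in $d=2$.

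For existence, I would seek $\sigma\in C^\infty(\gamma)$ such that $[\S_\gamma\sigma](\x)=U(\x)$ for all $\x\in\R^d\setminus\overline{D}$. Via uniqueness of the exterior Dirichlet BVP on $\gamma$ (after $\int_\gamma\sigma\,ds$ has been fixed to reproduce the logarithmic coefficient of $U$ in $d=2$), this reduces to solving the boundary integral equation $S_\gamma\sigma = U|_\gamma$ on $\gamma$, where $S_\gamma$ is the SLP boundary operator. In $d=3$, $S_\gamma$ is a positive, self-adjoint, elliptic pseudodifferential operator of order $-1$, and hence an isomorphism between natural smooth function spaces on $\gamma$; existence of $\sigma\in C^\infty(\gamma)$ follows, with smoothness (in fact real-analyticity) inherited from $U|_\gamma$ by elliptic regularity. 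Having built such a $\sigma$, restriction of $\S_\gamma\sigma$ to $\pO$ yields \eqref{Lmfsie}, and restriction to $\Oc$ yields \eqref{Lmfsrep}.

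For uniqueness, I would suppose $\sigma\in C^\infty(\gamma)$ satisfies $[\S_\gamma\sigma]|_{\pO}=0$. Then $\S_\gamma\sigma$ is harmonic in $\Oc$, vanishes on $\pO$, and decays as $C\log r+o(1)$ (zero constant term) in $d=2$ or as $o(1)$ in $d=3$; so by Lemma~\ref{l:0const} (using $C_\Omega\neq 1$) or by classical 3D exterior Dirichlet uniqueness, $\S_\gamma\sigma\equiv0$ on $\Oc$. Real-analyticity of the SLP in $\R^d\setminus\gamma$ propagates this to $\S_\gamma\sigma\equiv0$ on $\R^d\setminus\overline{D}$, so in particular $\S_\gamma\sigma|_\gamma=0$. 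Uniqueness of the interior Dirichlet problem on $D$ then gives $\S_\gamma\sigma\equiv0$ on $D$ as well, and the jump relation for the normal derivative of the SLP across $\gamma$ finally yields $\sigma\equiv0$.

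I expect the main obstacle to be the $d=2$ existence step. When the logarithmic capacity of $\gamma$ equals $1$, the operator $S_\gamma$ develops a one-dimensional null space spanned by the equilibrium measure of $\gamma$, together with a matching cokernel; solvability of $S_\gamma\sigma=U|_\gamma$ then requires a Fredholm compatibility condition on $U|_\gamma$ that must be read off from the zero-constant-term hypothesis on $u$ at infinity, and the resulting (non-unique) $\sigma$ must be pinned down by requiring $\int_\gamma\sigma\,ds=-2\pi C$ to reproduce the correct logarithmic coefficient. Reconciling the two capacity conditions---$C_\gamma=1$ (a degenerate configuration on the QFS side, which can however always be avoided by rescaling $\gamma$ and is not listed in the theorem) and $C_\Omega=1$ (the genuine obstruction on the physical side, appearing explicitly through Lemma~\ref{l:0const})---is where the most care is required in Appendix~\ref{a:mfs}.
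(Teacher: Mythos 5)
Your uniqueness argument is essentially the paper's, but your existence argument takes a genuinely different route, and that route has a gap in $d=2$.

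\textbf{Where you diverge.} The paper proves existence constructively, without ever inverting $S_\gamma$. Writing the exterior Green's representation formula for the continued $u$ on $\gamma$, then adding the extinction form of the GRF for the auxiliary interior solution $v$ (the Dirichlet solution inside $\gamma$ with data $v=u|_\gamma$), the double-layer terms cancel and what remains is the single-layer potential with density $\sigma := v_\n^- - u_\n$. This $\sigma$ automatically satisfies \eqref{Lmfsrep} in $\R^d\backslash\overline{\Omega_-}$, hence \eqref{Lmfsie}, with no hypothesis whatsoever on $\gamma$ beyond smoothness. Your approach instead attempts to invert the first-kind boundary operator $S_\gamma$ on $\gamma$ (positive elliptic $\Psi$DO of order $-1$), which is clean in $d=3$ and, when it works, buys a more systematic functional-analytic framing. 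But the paper's construction is more elementary and, crucially, insensitive to the capacity of $\gamma$.

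\textbf{The gap.} In $d=2$ your chain of reductions relies twice on $C_\gamma \neq 1$, a hypothesis the theorem does not grant you. First, the step ``via uniqueness of the exterior Dirichlet BVP on $\gamma$'' is really an application of Lemma~\ref{l:0const} with $\Omega$ replaced by the region $D$ inside $\gamma$; that lemma requires $C_\gamma\neq1$. Second, when $C_\gamma=1$, $S_\gamma$ has a one-dimensional kernel (equilibrium density), and solvability of $S_\gamma\sigma = U|_\gamma$ then requires a compatibility condition which you correctly flag as needing to ``be read off from the zero-constant-term hypothesis'' but do not verify. (It is in fact automatic --- one sees this a posteriori from the paper's constructed $\sigma$, or by a Green's-identity pairing of $U$ against the exterior Green's function of $D$ with pole at infinity --- but the proposal as written leaves it open.) Your escape of ``rescaling $\gamma$'' is not available: $\gamma$ is part of the theorem's data, and the conclusion is asserted for the given $\gamma$. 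Also note that prescribing $\int_\gamma\sigma\,ds = -2\pi C$ \emph{in addition} to $S_\gamma\sigma=U|_\gamma$ is an overdetermined system precisely when $C_\gamma\neq1$, and whether the free parameter introduced when $C_\gamma=1$ can be tuned to hit both targets simultaneously is exactly what needs proof. The paper's GRF construction avoids all of this.
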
  

Since layer potentials \eqref{rep} obey the stated decay conditions,
this shows that, at least for densities sufficiently analytic
to allow $u$ to continue as an interior PDE solution up to the source
curve $\gamma$, QFS is robust.
In $d=2$, where complex analysis is available,
it is known (eg \cite[Prop.~3.1]{ce})
that $u$ continues as a regular PDE
solution as least as far into the nonphysical domain as
the density $\tau$ continues analytically from $\pO$.
In $d=3$ results on analytic continuation are uncommon \cite{kangro3d}.

We will show shortly in Remark~\ref{r:logcap}
how numerically to overcome the failure
of Dirichlet matching for the troublesome case $C_\Omega=1$ in $d=2$.

\subsection{Background and robustness results for exterior evaluation for other PDEs}
\label{s:thHS}

From the above Laplace analysis we can distill two criteria
that together guarantee that QFS is a
robust and accurate exterior layer potential evaluator for elliptic PDEs:
\ben
\item[C1)] (Completeness.)
  In the evaluation region $\Oc$, 
  the range of $\tilde u$ generated by densities $\sigma$ in
  the QFS representation \eqref{qfsrep} contains
  the range of $u$ generated by densities $\tau$ in \eqref{rep}.
\item[C2)] (Uniqueness.)
  There exists a linear subspace of exterior PDE solutions in $\Oc$
  that contains the range of QFS representations \eqref{qfsrep},
  and in which imposing the matching data type on $\gamma_c$ leads to
  {\em uniqueness within this subspace}.
  \een
  It is easy to check that C1 plus C2 implies robustness for QFS.

To illustrate, in the above Laplace case,
C1 (the fact that the pure SLP QFS representation spans
the potentials generated by \eqref{rep}) is assured,
at least for sufficiently analytic $\tau$, by Theorem~\ref{t:qfslap}.
Both of these representations lie in the subspace
of exterior harmonic functions with decay as in the hypothesis
of Theorem~\ref{t:qfslap} (ie, zero constant term), which serves as the
subspace in C2.
Apart from when $C_\Omega=1$ in $d=2$, C2 holds, since
{\em within that subspace} Dirichlet data leads to uniqueness
(Lemma~\ref{l:0const}).
The subtlety of the failure for $C_\Omega=1$ is that, while
\eqref{FKIE} is still soluble (shown by construction in the
proof of Theorem~\ref{t:qfslap}),
its lack of uniqueness will lead numerically to
$\tilde u$ values different from $u$ outside $\pO$.


\begin{rmk}
  These issues appear specific to exterior BVPs. Hence we need not
  (and do not) discuss the simpler interior case much in this work. We
  routinely use QFS for interior problems without issue, for example
  the enclosing boundary in Section~\ref{section:large_scale:stokes}.
\end{rmk}

We now apply these criteria to show robustness for QFS in the
examples of Helmholtz and Stokes layer potential evaluation.
We will first need standard background material for these PDEs.
The full theorems are deferred to the Appendix.

{\bf Helmholtz.}
The exterior Dirichlet BVP is,
given any wavenumber $k>0$ and complex function $f\in C(\pO)$, to solve
\bea
(\Delta+k^2) u &=& 0 \qquad \mbox{ in } \Oc
\label{Hpde}
\\
u&=& f \qquad \mbox{ on } \pO
\label{Hbc}
\\
\partial u/\partial r - ik u&=& o(r^{-(d-1)/2})~, \qquad
r:=\|\x\| \to\infty~,
\label{SRC}
\eea
where the last is the Sommerfeld radiation condition.
This has a unique solution
(see \cite[p.~67]{coltonkress} for $d=2$ and \cite[Thm.~3.7]{coltonkress} for $d=3$).
The fundamental solution at wavenumber $k>0$ is \cite[Sec.~2.2, 3.4]{coltonkress}
\be
G(\x,\y) = 
\left\{\begin{array}{ll}
\frac{i}{4} H^{(1)}_0(kr),
& d=2, \\
\frac{e^{ikr}}{4\pi r},  & d=3,
\end{array}
\right.
\qquad r:=\|\x-\y\|~,
\label{HG}
\ee
where $H^{(1)}_0$ is the outgoing Hankel function of order zero.
The resulting SLP and DLP also generate Helmholtz solutions obeying
\eqref{SRC} \cite[Sec.~3.1]{coltonkress},
so that C2 holds for this subspace.
Theorem~\ref{t:qfshelm} then shows that C1 is satisfied when using the
``combined field'' mixture $(\ta,\tb) = (-i\eta,1)$, for $\eta$ any nonzero
real number. Following standard practice we choose $\eta=k$ from now on
\cite{kress85}.
The proof illustrates that a pure SLP or DLP would lead to nonrobustness
for $k^2$ a Neumann or Dirichlet (respectively) eigenvalue of
the Laplacian in the interior of $\gamma$.

{\bf Stokes.}
We refer the reader to Ladyzhenskaya \cite{Ladyzhenskaya} and
Hsiao-Wendland \cite[Sec.~2.3]{HW} for background.
The exterior Dirichlet BVP is,
given constant fluid viscosity $\mu>0$,
velocity data $\f\in C(\pO)^d$, and in $d=2$ a growth condition $\bSig\in\R^2$,
to solve for a velocity vector field $\u$ and pressure scalar field $p$ obeying
\bea
-\mu\Delta \u + \nabla p &=& 0 \qquad \mbox{ in } \Oc
\label{Spde1}
\\
\nabla \cdot \u &=& 0 \qquad \mbox{ in } \Oc
\label{Spde2}
\\
\u&=& \f \qquad \mbox{ on } \pO
\label{Sbc}
\\
\u(\x)&=&
\left\{\begin{array}{ll}
\bSig\log r + \bom + o(1)~, & d=2~, \\
o(1)~, & d=3~,
\end{array}
\right.
\qquad r:=\|\x\|\to\infty~.
\label{bSig}
\eea
This has a unique solution
for $\u$, and $p$ is unique up to an additive constant
\cite[p.~60]{Ladyzhenskaya} \cite[Sec.~2.3.2]{HW}.
In $d=2$ the constant term may be extracted from the solution
via $\bom = \lim_{r\to\infty} \u(\x) - \bSig \log \|\x\|$,
thus when $\f\equiv\mbf{0}$ the BVP defines a $2\times 2$ matrix
mapping $\bSig$ to $\bom$.
The (tensor-valued) fundamental solution for velocity is
\be
G(\x,\y) = 
\left\{\begin{array}{ll}
\frac{1}{4\pi\mu}\left(I\log\frac{1}{r} + \frac{\r\r^T}{r^2} \right),
& d=2, \\
\frac{1}{8\pi\mu}\left(I\frac{1}{r} + \frac{\r\r^T}{r^3} \right),
& d=3,
\end{array}
\right.
\qquad \r := \x-\y~, \quad r:=\|\r\|~.
\label{SG}
\ee
In contrast to the above scalar PDEs, the DLP kernel
is not
$\partial G(\x,\y)/\partial \n_\y$. The Stokes DLP kernel is
\be
D(\x,\y) = 
\left\{\begin{array}{ll}
\frac{1}{\pi}\frac{(\r\cdot\n_\y)\r\r^T}{r^4},
& d=2, \\
\frac{3}{4\pi}\frac{(\r\cdot\n_\y)\r\r^T}{r^5},
& d=3.
\end{array}
\right.
\label{SD}
\ee
Also, in $d=2$ the corresponding pressure kernels
\eqref{SP} will later be needed.

The goal is to evaluate velocities due to arbitrary densities $\btau$
in
$\u = (\alpha {\cal S} + \beta {\cal D})\btau$, the vector version of \eqref{rep}.
Theorem~\ref{t:qfssto} shows that
the ``completed'' mixture $(\ta,\tb)=(1,1)$ is robust,
for all sufficiently analytic $\btau$,
in $d=3$, or when the above $2\times 2$ matrix is nonsingular.
The latter condition is
analogous to the Laplace capacity condition; see Remark~\ref{r:2x2}.
(We also see numerically, and can prove, that a pure SLP $(\ta,\tb)=(1,0)$
is robust for any $\btau$ which creates zero {\em net fluid flux}
$\beta \int_\pO \btau \cdot \n = 0$,
as occurs in rigid-body flows.)
The SLP and DLP generate Stokes solutions $(\u,p)$
obeying \eqref{bSig}, with $\bom=\mbf{0}$ in $d=2$,
so that C2 holds for this zero-constant-term subspace.
Finally, the theorem then shows that C1 is satisfied.
We note that in $d=3$ a related MFS-based Stokes BIE method has been analysed
\cite{gonzalez09}.

\section{The method for smooth curves in two dimensions}
\label{s:2d}

Here we first describe QFS-B for the exterior
of a single boundary curve in 2D. We next give some theoretical
justifications for the source location algorithm.
We then show numerical tests of QFS-B,
and finally describe and test QFS-D.

\subsection{Basic 2D scheme using collocation on the boundary (QFS-B)}
\label{s:qfs-b}

The user of QFS defines the boundary $\pO$
by supplying a set of nodes $\x_j\in\pO$ and weights $w_j$, $j=1,\dots,N$, which are a good quadrature rule for boundary integrals, meaning that
\be
\int_\pO f(\x) ds_\x  \; \approx \; \sum_{j=1}^N f(\x_j) w_j
\label{rule}
\ee
holds for all smooth functions $f$ on $\pO$.  Specifically we
assume that the error (relative difference between left and right
sides) is no larger than the user-requested tolerance $\ep$ for all
``relevant'' functions $f$, such as BIE integrands with distant targets.
The user also
supplies their vector of density values $\tau_j := \tau(\x_j)$ at these
nodes.  The goal is then to evaluate a potential of the form \eqref{rep}
everywhere in the exterior, also with error $\bigO(\ep)$.

We now set up $P$ sources at locations $\y_j$, $j=1,\dots,P$.
For efficiency reasons we prefer that $P=N$,
although it will sometimes need to be slightly larger.
We assume that a smooth $2\pi$-periodic counterclockwise
parameterization of $\x:\R\to\R^2$ of $\pO$ is available, meaning that
$\x([0,2\pi))=\pO$, and $\x(2\pi)=\x(0)$.
Such a parameterization (and its derivatives) can in practice be extracted by spectral interpolation from user-supplied nodes.
Our recipe for source locations is then equispaced in parameter on an
interior curve $\gamma$ controlled by a separation parameter $\delta>0$,
\be
\gamma_\delta := \bigl\{ \x(t) - \delta \, \|\x'(t)\| \,\n(t)
+ \delta^2 \x''(t) : \; 0\le t < 2\pi  \bigr\}
~,
\label{curve}
\ee
where $\n(t):= R_{-\pi/2} \x'(t)/\|\x'(t)\|$ is the
parametrized outward unit normal, $R_\theta$ denoting counterclockwise rotation
by $\theta$.
We now propose to set $\delta$ and $P$,
and choose source locations $\y_j \in \gamma_\delta$
equispaced in parameter,
via Algorithm~\ref{g:src}.
To first order, 
this separates sources from $\pO$ by
a constant multiple of the local node spacing $h$ on $\pO$;
for an example see Fig.~\ref{f:setup}(c).

\begin{algorithm}[t] 
  \algrenewcommand\algorithmiccomment[2][\footnotesize]{{#1\hfill\(\triangleright\) #2}}  
  \caption{Choosing source locations in 2D}
  \algorithmicrequire{
    $C^2$-smooth parameterization $\x(t)$ of $\pO$, user number of nodes $N$, user tolerance $\ep$,
    source upsampling parameter $\upsilon\ge 1$
    (by default 1).}
  \begin{algorithmic}[1]
    \State assign $P \leftarrow N$
    \State 
    assign a separation $\delta$ appropriate for the user tolerance $\ep$, via
    \be
    \delta \; =\; \frac{1}{P}\log \frac{1}{\ep}~.
    \label{dalias}
    \ee
    
  \If{the curve $\gamma_\delta$ defined by \eqref{curve}
    self-intersects or falls outside of $\Omega$}
    \State estimate $\delta_0$ as the supremum of $\delta$ values such that $\gamma_\delta$ does not self-intersect nor fall outside of $\Omega$
    \State reassign $\delta\leftarrow \delta_0$
     \Comment{this brings source curve closer to $\pO$}
    \State reassign $P$ via \eqref{dalias} \Comment{this increases $P$}
  \EndIf
  
  \State
  reassign $P \leftarrow \lceil \upsilon P \rceil$
  \Comment{possibly upsample, round up}
  
  \State return $P$ source locations $\y_j$ on $\gamma_\delta$ via
  \be
  \y_j \;=\; \x(t_j) - \delta \, \|\x'(t_j)\| \,\n(t_j)
  + \delta^2 \x''(t_j)
  ~,  \qquad t_j=2\pi j/P~, \qquad j=1,\dots, P~.
  \label{src}
  \ee
  
  \end{algorithmic}
  \label{g:src}
\end{algorithm}   

As described early in Section~\ref{s:th},
one now fills the dense ``boundary from source'' matrix $B$ with entries as
in \eqref{qfssys},
\be
B_{ij} = \ta G(\x_i,\y_j) + \tb \frac{\partial G(\x_i,\y_j)}{\partial \n_{\y_j}}~,
\qquad i=1,\dots,N,\; j=1,\dots,P~,
\label{B}
\ee
where we state only the scalar case (in the vector case each
entry is a $2\times 2$ matrix).
The action of $B$ is sketched in Fig.~\ref{f:setup}(c).
Recall that, given the user-supplied density vector $\btau$,
Dirichlet matching data is evaluated via
\be
\u_c \;=\; A \btau~,
\label{ucA}
\ee
where $A$ 
is a user-supplied exterior limit Nystr\"om matrix
(as in \eqref{bie}--\eqref{linsys}).

Mathematically, one then solves for the vector $\bsig\in\C^P$ in the linear system
\be
B\bsig \;=\; \u_c~,
\label{Bsuc}
\ee
which abbreviates \eqref{qfssys}, and is interpreted as matching $\tilde u$ in \eqref{qfsu} to $u_c$ on $\pO$,
then applies \eqref{qfsu} as the QFS approximation to $u$ for all exterior
target points.

Since it
is inefficient to do the dense solve of
\eqref{ucA}--\eqref{Bsuc} anew for each density,
we propose the following variant, firstly in exact arithmetic.
One precomputes the $P\times N$ matrix solution $X$ to the matrix equation
\be
B X \; = \; A~.
\label{BXA}
\ee
The action of $X$ is sketched in Fig.~\ref{f:setup}(c).
Then, for each new $\btau$ vector, one takes the product
\be
\bsig = X \btau
\label{sXt}
\ee
to give the desired $\bsig$ in only $\bigO(N^2)$ time per vector.
It is easy to check that \eqref{BXA}--\eqref{sXt} solves
\eqref{ucA}--\eqref{Bsuc} in exact arithmetic.

However \eqref{BXA}--\eqref{sXt} is unstable in finite-precision
arithmetic because the entries of $X$ are large, due to the
ill-conditioning of $B$
(which is exponentially bad, as we will quantify in Proposition~\ref{p:circeig}).
Catastrophic cancellation in applying \eqref{sXt} typically loses
several digits of accuracy; this cannot be avoided if $X$ is formed.
Thus, instead, following \cite[Rmk.~5]{junlai} \cite{pvfmm} we propose
storing $X$ as two factors $X = Y Z$.
Taking the SVD of $B$ (which, since $P\ge N$, is either square or ``tall''),
\be
U\Sigma V^* = B~,
\label{svdB}
\ee
where $\Sigma = $ diag $\{s_j\}_{j=1}^{N}$,
the singular values being denoted by $s_j$,
one then fills
\be
Y = V \Sigma^{-1}, \hspace{.5in} Z = U^* A~,
\label{YZ}
\ee
where $\Sigma^{-1} := $ diag $\{s_j^{-1}\}_{j=1}^{N}$.
New user-supplied density vectors can then be converted to QFS source vectors in $\bigO(N^2)$ time
via
\be
\bsig = Y (Z \btau) ~,
\label{applyYZ}
\ee
where the {\em order of multiplication} implied by parenthesis is crucial
for numerical stability.
This concludes the basic QFS-B description in 2D;
we will now motivate some aspects via more analytic results.

\begin{rmk}
  In practice, in 2D, although $B$ is ill-conditioned, it is not
  sufficiently so that a regularized inverse is needed in \eqref{YZ}.
  This is because sources chosen using \eqref{dalias}, combined
  with the upcoming Proposition~\ref{p:circeig}, predicts a minimum
  eigenvalue, hence singular value, of $\bigO(\sqrt{\ep})$, safely
  above $\ep_\tbox{\rm mach}$.
  \label{r:cond}
\end{rmk}

\subsection{Discrete theory for source point choice and convergence rate in 2D}
\label{s:2dth}

So far our analytic results have been at the continuous (integral operator)
level. We now introduce analytical background for the {\em discrete}
problem, to justify Algorithm~\ref{g:src}
and to understand the convergence {\em rate} of QFS.

For this analysis, and later numerical tests,
we specialize to quadrature of an analytic curve $\pO$ deriving from
the periodic trapezoid rule (PTR) \cite{PTRtref}.
Recall that
for general $2\pi$-periodic functions $g(t)$ the latter is
\be
\int_0^{2\pi} f(t) dt \; \approx \; \frac{2\pi}{N}\sum_{j=1}^N f(2\pi j/N)
\qquad \mbox{(PTR)~,}
\label{ptr}
\ee
and this rule is high-order accurate for $g$ smooth.
Moreover we have exponential convergence for $g$ analytic.
\begin{thm}[Davis \cite{davis59}]
  Let $f$ be $2\pi$-periodic and analytic,
  and continue analytically
  to a function bounded uniformly
  in the closed strip $|\im t|\le d$.
  Then the error in the PTR quadrature (difference between left and right sides of \eqref{ptr}) is  $\bigO(e^{-dN})$ as $N\to\infty$.
  \label{t:ptr}
\end{thm}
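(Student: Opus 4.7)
The plan is to prove the bound via the standard Fourier-analytic (aliasing) approach. First I would expand $f$ in its Fourier series $f(t)=\sum_{k\in\Z}\hat f_k e^{ikt}$, with $\hat f_k=(2\pi)^{-1}\int_0^{2\pi} f(t) e^{-ikt} dt$. Since $f$ is analytic and bounded on the closed strip $|\im t|\le d$, this series converges absolutely and uniformly on $\R$, so exchange of sum and integral (and sum with the finite PTR sum) is legal. The exact integral equals $2\pi \hat f_0$. Applied to a single mode $e^{ikt}$, the PTR gives $(2\pi/N)\sum_{j=1}^N e^{2\pi i k j/N}$, which equals $2\pi$ when $N\mid k$ and $0$ otherwise. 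Hence the PTR of $f$ equals $2\pi\sum_{m\in\Z}\hat f_{mN}$, and the quadrature error is the aliased sum
\[
E_N(f) \;=\; 2\pi \sum_{m\in\Z\setminus\{0\}} \hat f_{mN}.
\]

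Next I would bound the Fourier coefficients using analyticity. For $k>0$, shift the contour of the integral defining $\hat f_k$ from $[0,2\pi]$ on the real line down to $[0,2\pi]-id$; the two vertical segments at the endpoints cancel exactly because the integrand $f(t)e^{-ikt}$ is $2\pi$-periodic in $t$, and the shift is justified by Cauchy's theorem since $f$ is holomorphic in the open strip and continuous (hence bounded with some constant $M$) on the closed strip. This gives
\[
\hat f_k \;=\; \frac{1}{2\pi}\int_0^{2\pi} f(s-id)\, e^{-ik(s-id)}\, ds,
\]
whose absolute value is at most $M e^{-dk}$. For $k<0$ the symmetric upward shift to $[0,2\pi]+id$ yields $|\hat f_k|\le M e^{-d|k|}$.

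Combining, the aliased series is bounded by a geometric sum,
\[
|E_N(f)| \;\le\; 2\pi M \sum_{m\neq 0} e^{-d|m|N} \;=\; \frac{4\pi M\, e^{-dN}}{1-e^{-dN}} \;=\; \bigO(e^{-dN})
\]
as $N\to\infty$, which is the stated bound. The only real subtlety—hardly an obstacle—is justifying the contour shift: this is why the hypothesis demands a bound on the \emph{closed} strip rather than just holomorphy on the open one, so that the integrals along the shifted horizontal contours are genuinely convergent and Cauchy's theorem applies to the rectangular region. Everything else is bookkeeping on a geometric series.
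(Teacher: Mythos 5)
Your proof is correct: the aliasing identity $E_N(f)=2\pi\sum_{m\neq 0}\hat f_{mN}$ combined with the contour-shift bound $|\hat f_k|\le Me^{-d|k|}$ is the standard route to this bound, and the paper itself gives no proof (it simply cites Davis), so there is nothing to diverge from. The only cosmetic point is ordering: you invoke absolute/uniform convergence of the Fourier series before establishing the coefficient decay that justifies it, so in a written version prove the bound $|\hat f_k|\le Me^{-d|k|}$ first (or shift to $\im t=\pm(d-\eps)$ and let $\eps\to 0$ if analyticity is only assumed in the open strip), then deduce convergence and the aliasing formula.
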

Given a $2\pi$-periodic smooth parameterization $\x(t)$ of $\pO$,
a boundary quadrature rule \eqref{rule} follows by changing variable
from arclength to $t$ then applying the PTR,
$$
\int_\pO f(\x) ds_\x = \int_0^{2\pi} f(\x(t)) \|\x'(t)\| \, dt
\; \approx \; \frac{2\pi}{N} \sum_{j=1}^N f(\x(2\pi j/N)) \, \|\x'(2\pi j/N)\|
$$
implying that the nodes and weights in \eqref{rule} are
\be
\x_j = \x(2\pi j/ N)~, \qquad w_j = (2\pi/N) \|\x'(2\pi j/N)\|~.
\label{ptrpO}
\ee
Here $\x'(t) := d\x/dt$ is the parametric ``velocity'',
$\|\x'(t)\|$ its ``speed'', so the local node spacing ($h$) is $w_j$.

The source curve \eqref{curve} proposed above has its origin as
a 2nd-order Taylor approximation to the following
curve $\gim_\delta$ generated by
``imaginary parameter translation''.
Assume that the $\pO$ parameterization $\x(t)=[x_1(t),x_2(t)]$ is a pair of real analytic functions, and $\|\x(t)\|>0$, $t \in [0,2\pi)$.
  In that case it is possible to
  analytically extend the parameterization into an annulus around $\pO$
by identifying $\R^2$ with $\C$, as follows.
Let $Z(t) := x_1(t)+ix_2(t)$ for $t$ real, then $Z$ may be uniquely
analytically continued throughout some strip $I_0:=\{t\in\C: |\im t|<\delta_0\}$
about the real axis, defining an annular conformal map $Z:I_0\to\C$.
Then define
\be
\x(t,\delta):= [\re Z(t+i\delta), \im Z(t+i\delta)],  \qquad t,\delta \in \R~,
\label{imsh}
\ee
and note that $\x(t,0)=\x(t)$ for all $t$, which recovers $\pO$.
We call \eqref{imsh} an
{\em imaginary shift by $\delta$ in the complexified parameterization}.
Fixing $\delta$, it generates a curve
\be
\gim_\delta := \bigl\{ \x(t,\delta) : \; 0\le t < 2\pi  \bigr\}
\label{curveim}
\ee
which for $\delta>0$ lies inside $\pO$, and for $\delta<0$ lies outside $\pO$.
In practice we find that the 2nd-order approximation is good, ie,
$\gamma_\delta$ is very close to $\gim_\delta$, and
$\|\y_j - \x(2\pi j/P,\delta)\| \ll \delta$,
making the following theory relevant.

The imaginary translation idea has been studied in the setting of the MFS,
where it brings both practical and theoretical advantages
\cite{Ka96,Kark01,mfs,kangro2d,acper}.
It has led to a class of convergence results for various BVPs
which can be summarized, somewhat loosely, by the following.
\begin{thm}[MFS convergence rate \cite{Ka89,Ka96,mfs,kangro2d}]  
  Let the matching curve $\gamma_c=\pO$ be analytic, as above, with $Z$ analytic in a strip with half-width $\delta_0$.
  Let the data $u_c$ on $\pO$ be analytic, with parametric form $u_c(t)$
  continuing to an analytic function throughout some strip
  $|\im t|\le\delta_\ast$.
  Let $\delta>0$ be sufficiently small, generating
  source locations $\y_j = \x(2\pi j/P,\delta)$, $j=1,\dots,P$,
  and source mixture $(\ta,\tb)=(1,0)$.
  Then there is an algebraic order $a\ge 0$ such that
  the (MFS or QFS) solution method
  \eqref{qfssys}--\eqref{qfsu} in exact arithmetic has asymptotic error
  \be
  \|\tilde u - u_c\|_\pO = \left\{ \begin{array}{lll}
    \bigO (P^a e^{-\delta P}), & \delta<\delta_\ast/2 & \qquad \mbox{\rm (discrete source aliasing; smooth data)} \\
    \bigO (P^a e^{-\delta_\ast P/2}), & \delta\ge\delta_\ast/2 & \qquad \mbox{\rm (Nyquist frequency limit; rough data)}
  \end{array}\right.
  \label{mfsrates}
  \ee
  as $P\to\infty$, where $\|\cdot\|_\pO$ is some boundary norm.
  \label{t:mfs}
\end{thm}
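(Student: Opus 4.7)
\medskip
\noindent\textbf{Proof plan for Theorem~\ref{t:mfs}.}
The plan is to transport the whole problem to the parameter plane via the complexified parameterization $Z$, in which the source curve becomes the line $\{\im s = \delta\}$ and the matching curve $\pO$ becomes the real axis. Since the Laplace equation is conformally invariant, SLP potentials with sources on $\gim_\delta$ pull back to harmonic functions with singularities at the corresponding points $\{t_j + i\delta\}$, and $u_c$ becomes a $2\pi$-periodic analytic function $u_c(t)$ extending to the strip $|\im t|\le \delta_\ast$. The analysis then splits into a continuous MFS step and a discretization step, mimicking the arguments of Katsurada--Okamoto \cite{Ka89,Ka96} and Kangro \cite{kangro2d}.

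For the continuous step, I would first solve the integral equation \eqref{FKIE} (with $(\ta,\tb)=(1,0)$) exactly. Fourier-diagonalizing in $t$ on the model circle case, the SLP with source curve at imaginary shift $\delta$ has eigenvalues behaving like $e^{-\delta|n|}/(2|n|)$ for $n\neq 0$; for a general analytic $\pO$ the same bound holds up to lower-order factors via conformal pullback. Given $|\hat u_{c,n}|\lesssim e^{-\delta_\ast|n|}$, the continuous density therefore satisfies $|\hat\sigma_n|\lesssim |n|\,e^{-(\delta_\ast-\delta)|n|}$, which is absolutely summable for any $\delta<\delta_\ast$. Thus $\sigma(s)$ extends analytically in $s$ to the strip $|\im s|<\delta_\ast-\delta$.

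For the discretization step, I would decompose the boundary residual $\tilde u-u_c$ at collocation points into two contributions.
\emph{(i) Nyquist/truncation error:} restricting to $P$ equispaced sources effectively truncates $\sigma$ at Fourier mode $\lfloor P/2\rfloor$; the tail contributes a boundary residual of size $O(P^a e^{-(\delta_\ast-\delta)P/2})$ times the SLP gain $e^{-\delta P/2}$ at the Nyquist mode, giving $O(P^a e^{-\delta_\ast P/2})$.
\emph{(ii) Source aliasing error:} viewing the discrete sum $\sum_j G(\x(t),\y_j)\sigma_j$ as the PTR applied to the continuous integral $\int_0^{2\pi} G(\x(t),\x(s,\delta))\sigma(s)|\partial_s \x(s,\delta)|\,ds$, Theorem~\ref{t:ptr} gives error $O(P^a e^{-d P})$, where $d$ is the distance from $\R$ to the nearest singularity of the integrand in $s$. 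The integrand is analytic in $s$ except where the analytically continued source coincides with $\x(t)$, i.e.\ at $s=t-i\delta$, giving $d=\delta$. Combining (i) and (ii) gives $\|\tilde u - u_c\|_\pO = O(P^a(e^{-\delta P}+e^{-\delta_\ast P/2}))$; whichever exponent is smaller dominates, and these coincide exactly at $\delta=\delta_\ast/2$, yielding the stated piecewise rate.

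The main obstacle is that the exact Fourier diagonalization of the SLP used in step~(i) only holds cleanly on the unit circle, and that one must track the power $a$ carefully through the Jacobian factor $|\partial_s \x(s,\delta)|$ and the $|n|$ growth in $\hat\sigma_n$. For general analytic $\pO$ this is handled by pulling back through the conformal map $Z$, which is a bounded perturbation of the identity on each strip of width strictly less than $\delta_0$; the resulting SLP kernel differs from the flat (circle) kernel by an operator whose Fourier symbol remains within the same exponential envelope, so the eigenvalue estimates persist up to the algebraic factor $P^a$. The cited references \cite{Ka89,Ka96,mfs,kangro2d} carry out these details; our contribution is only to verify that our choice of $(\ta,\tb)=(1,0)$ and parameter-equispaced sources on $\gim_\delta$ falls squarely within their hypotheses.
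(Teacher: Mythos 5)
First, a caveat on the comparison: the paper does not actually prove Theorem~\ref{t:mfs}. It is stated ``somewhat loosely'' as a summary of results from \cite{Ka89,Ka96,mfs,kangro2d}, and the remark immediately following it stresses that for general analytic boundaries only one regime or the other of \eqref{mfsrates} has been established (Katsurada for the Nyquist case of interior Laplace, Kangro for the aliasing case of exterior Helmholtz), via operator approximation in exponentially-weighted Sobolev spaces with extra technical hypotheses. Your plan, which ends by deferring the details to the same references, is therefore in the same spirit as the paper; but as a proof sketch it has concrete gaps that are precisely the parts those references exist to handle.

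Two of these gaps are structural. (1) Your regime-2 bound does not follow from your decomposition. You need the continuous density $\sigma$ to exist and continue analytically into the strip $|\im s|<\delta_\ast-\delta$; for $\delta\ge\delta_\ast$ it need not exist at all, and for $\delta_\ast/2\le\delta<\delta_\ast$ its nearest singularity is at distance $\delta_\ast-\delta<\delta$, so the Davis-type aliasing step (your item (ii)) yields only $\bigO(e^{-(\delta_\ast-\delta)P})$, which is strictly worse than the claimed $\bigO(P^a e^{-\delta_\ast P/2})$ as $\delta\to\delta_\ast$. The true mechanism in that regime is that the \emph{discrete} solution is not the PTR sampling of $\sigma$ but a re-adjusted source vector; establishing that it achieves the Nyquist rate requires the weighted-space operator analysis, not a plug-in candidate. (2) Even in regime 1, exhibiting one good candidate source vector bounds a least-squares residual, not the boundary error of the exact square collocation solution between collocation points: the system is exponentially ill-conditioned, and smallness of the residual at the nodes does not by itself control $\|\tilde u-u_c\|_\pO$ or the norm of the discrete solution. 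Finally, the transfer from the circle to a general analytic curve via ``$Z$ is a bounded perturbation of the identity, so the Fourier eigenvalue envelope persists'' is exactly the hard technical content (the layer operators are no longer normal or Fourier-diagonal, and the perturbation is not small in the relevant exponentially-weighted norms); the paper's own remark notes this step is what makes the general-curve theory incomplete. Note also that conformal invariance is specific to Laplace, whereas the theorem is invoked for Helmholtz and Stokes mixtures as well, so the pullback argument would need to be replaced (as Kangro does) for those kernels.
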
            
Such results state that the MFS has near-exponential convergence
in the number of source points, with two
regimes of rate interpreted as follows:
if the sources are {\em close} to $\pO$ then their discrete aliasing error
prevents each Fourier mode of $u_c(t)$ from being accurately represented;
whereas, if the sources are {\em far} from $\pO$ the roughness of $u_c$ dominates the error because they cannot represent Fourier modes on $\pO$
with index magnitude exceeding the Nyquist frequency $P/2$.
\begin{rmk}
  The above theorem was first proven for the interior Laplace BVP in the disk
  by Katsurada \cite{Ka89}, and generalized to interior Helmholtz in the disk
  with $L^2$-minimization on $\pO$ in \cite[Thm.~3]{mfs}.
  These results used Fourier series in $t$, and in almost all cases
  there is no algebraic prefactor ($a=0$).
  For general analytic boundaries with annular
  conformal maps,
  only the second case in \eqref{mfsrates} is known for interior Laplace \cite[Thm.~3.2]{Ka94} \cite[App.~A]{Ka96},
  while for exterior Helmholtz problems, Kangro \cite[Thm~4.1]{kangro2d}
  has proven the first case in \eqref{mfsrates}.
  These latter results use
  integral operator approximations in exponentially-weighted Sobolev spaces,
  and involve quite technical other conditions that we do not state.
  We know of no such results for Stokes BVPs, even on the disk.
  So the analysis of the MFS, even for 2D analytic domains,
  is still incomplete.
\end{rmk}

Our criterion \eqref{dalias} for $\delta$ and $P$ is
now understood as follows: one
sets the boundary error norm to the user-requested tolerance $\ep$,
drops the algebraic prefactor
in the upper (aliasing or smooth data) case of \eqref{mfsrates},
then uses this as an equality.
If the resulting curve $\gamma_\delta$ self-intersects or falls outside
of $\Omega$, then $\delta>\delta_0$ was too large and Theorem~\ref{t:mfs}
does not apply.
This provides an analytic foundation for Algorithm~\ref{g:src}.

The above MFS convergence theorems rely on the smoothness of the layer
operators between separated curves. For the special case of concentric
circles this smoothness
is simple to show via polar separation of variables, giving the following
general result for scalar 2nd-order elliptic PDE.
\begin{pro}[ill-conditioning of first-kind integral equation]
  Consider $\pO$ the unit circle with complexified parameterization $Z(t)=e^{it}$, and let $\delta \neq 0$ so that $\gim_\delta$ is the concentric circle of radius $e^{-\delta} \neq 1$.
  Then the single- and double-layer integral operators from source $\gim_\delta$ to target $\pO$ have as eigenvectors the Fourier modes $e^{int}$, $n\in\mathbb{Z}$, with corresponding eigenvalues decaying
  as $\bigO(e^{-|\delta n|})$
  asymptotically as $|n|\to\infty$,
  ignoring algebraic prefactors.
  \label{p:circeig}
\end{pro}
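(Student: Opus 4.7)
The plan is to diagonalize the operators by exploiting rotational symmetry, then read off the eigenvalue decay rate from a classical expansion of the kernel.

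\emph{Diagonalization.} First I would observe that, because $\pO$ and $\gim_\delta$ are concentric circles and $G(\x,\y)$ depends only on $\x-\y$, the kernel evaluated with $\x\in\pO$ and $\y\in\gim_\delta$ depends only on the angular difference $u:=\theta_\x-\theta_\y$. Both the SLP and DLP operators then commute with every rotation of $\R^2$, so the Fourier characters $e^{int}$, $n\in\Z$, are automatically eigenfunctions, and their eigenvalues are (up to a trivial constant arc-length factor from $ds_\y=e^{-\delta}\,d\theta_\y$) the $n$th Fourier coefficients of the $2\pi$-periodic kernel $K(u):=G(e^{it},e^{-\delta}e^{i(t-u)})$.

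\emph{Explicit computation of eigenvalues.} Next I would compute the coefficients directly in the two most common cases. For Laplace, the classical expansion
\[ -\log|e^{it}-\rho e^{is}| = -\log\max(1,\rho) + \sum_{n\ge 1}\frac{\min(\rho,1/\rho)^n}{n}\cos n(t-s) \]
evaluated at $\rho=e^{-\delta}$ gives SLP eigenvalues of magnitude $e^{-|\delta n|}/(2|n|)$; a single differentiation in $\rho$ (the source-radial, hence normal, direction) then yields the DLP eigenvalues, which lose the $1/|n|$ prefactor but keep the same exponential rate. For Helmholtz I would use Graf's addition formula
\[ H_0^{(1)}(k\|\x-\y\|) = \sum_{n\in\Z} H_{|n|}^{(1)}(kr_>)\,J_{|n|}(kr_<)\,e^{inu}, \qquad r_>:=\max(\|\x\|,\|\y\|),\ r_<:=\min, \]
which identifies the SLP eigenvalues as $H_{|n|}^{(1)}(kr_>)J_{|n|}(kr_<)$; plugging in the uniform large-order asymptotics $J_n(z)\sim (ez/2n)^n/\sqrt{2\pi n}$ and $H_n^{(1)}(z)\sim -i\sqrt{2/(\pi n)}(2n/ez)^n$ yields product $\sim e^{-|\delta n|}/|n|$, the same exponential rate times an algebraic prefactor.

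\emph{General scalar 2nd-order case.} For an arbitrary constant-coefficient scalar 2nd-order elliptic PDE with rotation-invariant fundamental solution, I would argue abstractly: $K(u)$ is real-analytic on $\R$ and continues holomorphically throughout the strip $|\im u|<|\delta|$, since the only singularity of $G$ occurs at $\x=\y$, which corresponds to the complex equation $e^{it}=e^{-\delta}e^{i(t-u)}$ and first forces $u=\pm i\delta$. A standard Paley--Wiener-type bound (the same principle underlying Theorem~\ref{t:ptr}) then gives $n$th Fourier coefficient decay $\bigO(e^{-|\delta n|})$, up to an algebraic prefactor controlled by the type (logarithmic, inverse-power, etc.) of singularity of $K$ at $u=\pm i\delta$. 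The last step --- extracting the precise algebraic prefactor from the local structure of $G$ near its diagonal singularity --- will be the main technical obstacle, but since the proposition only claims the exponential rate ``ignoring algebraic prefactors,'' it is harmless for the stated bound.
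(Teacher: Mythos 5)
Your proposal is correct and follows essentially the route the paper itself indicates: the paper offers no written proof, only the remark that the result follows by polar separation of variables for concentric circles together with citations to the MFS literature for the Laplace and Helmholtz single-layer cases. Your Fourier diagonalization by rotational symmetry, the explicit $\log$ expansion and Graf addition theorem computations, and the strip-of-analyticity argument (singularities of the complexified kernel at $u=\pm i\delta$) are exactly the standard arguments behind those cited results, so this fills in the details rather than diverging from the paper's approach.
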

This is well known for Laplace and for fixed-$k$ Helmholtz 
(eg, see \cite{mfs} for the single-layer case).
For Stokes, numerically one sees similar upper (but not always lower) bounds;
the analysis is incomplete (but see \cite{hsiao85}).
For general curves obtained by imaginary translations
in the general annular conformal map case,
similar results feature
in the proofs of Theorem~\ref{t:mfs} in Laplace \cite{Ka96}
and Helmholtz \cite{kangro2d} settings.

\begin{rmk}[upper bound on separation]
  Given finite-precision arithmetic,
  Proposition~\ref{p:circeig} implies that the source curve $\gamma_\delta$
  should
  not be so far from the check curve (in this case $\pO$) that the influence
  on (eigenvalue of) the highest Fourier mode $|n|=N/2$ on the boundary
  drops below machine precision, $\ep_\tbox{mach}$.
  This gives the upper bound on separation
  $
  \delta \le \frac{2}{N}\log\emach^{-1}
  $
  We note that, since $P\approx N$ and $\ep>\emach$,
  the choice \eqref{dalias}
  is safely no more than about half this upper bound.
\end{rmk}


\bfi 
\ig{width=\textwidth}{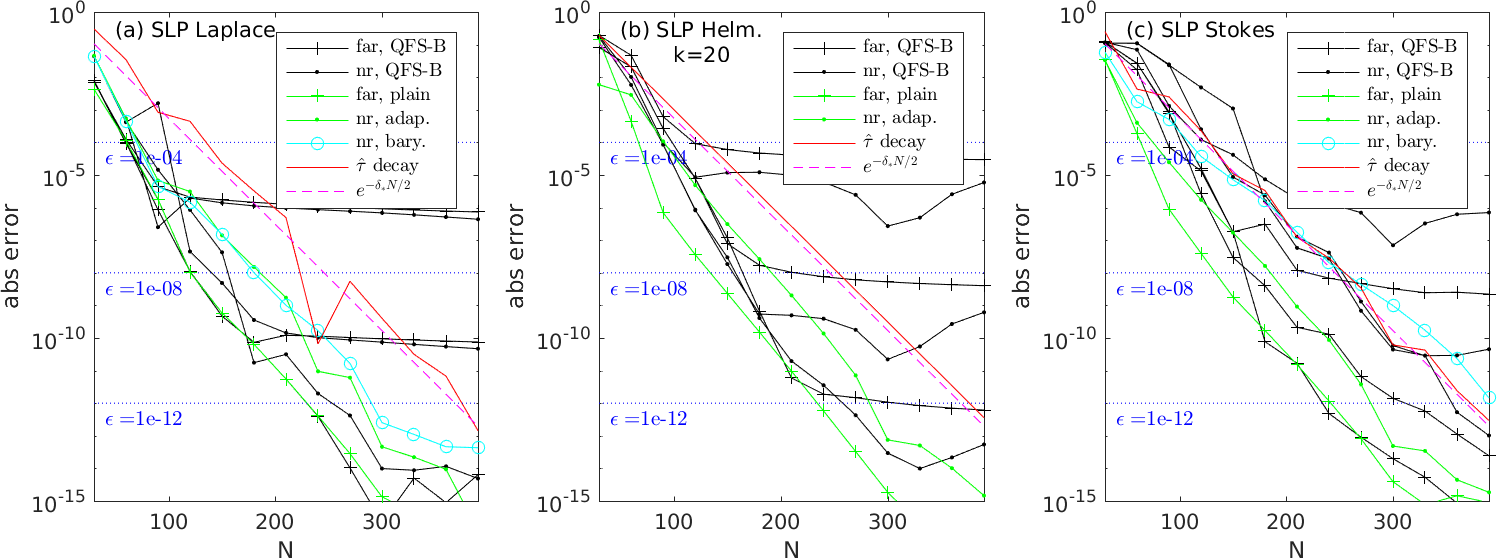}
\ca{Error convergence for 2D exterior evaluation
  of a given
  single-layer potential $\tau$ vs number of nodes $N$,
  for 3 PDEs (a,b,c); see Sec.~\ref{s:Btest}.
  The shape and (far, near) targets are as in Fig.~\ref{f:setup}.
  QFS-B from Sec.~\ref{s:qfs-b} is shown (black) for
  the three tolerances shown (blue).
  Green curves show the plain Nystr\"om rule for the far target, and
  adaptive Gaussian integration of the spectral interpolant for the near
  target.
  The barycentric method of \cite{lsc2d} is also shown for the near target
  (cyan circles).
  The Fourier decay \eqref{foud} of the density (red) and
  a predicted rate based on $\delta_\ast$ the parameter-plane singularity distance
  from the real axis (dashed) are compared.
}{f:BSLP}
\efi 

\bfi 
\ig{width=\textwidth}{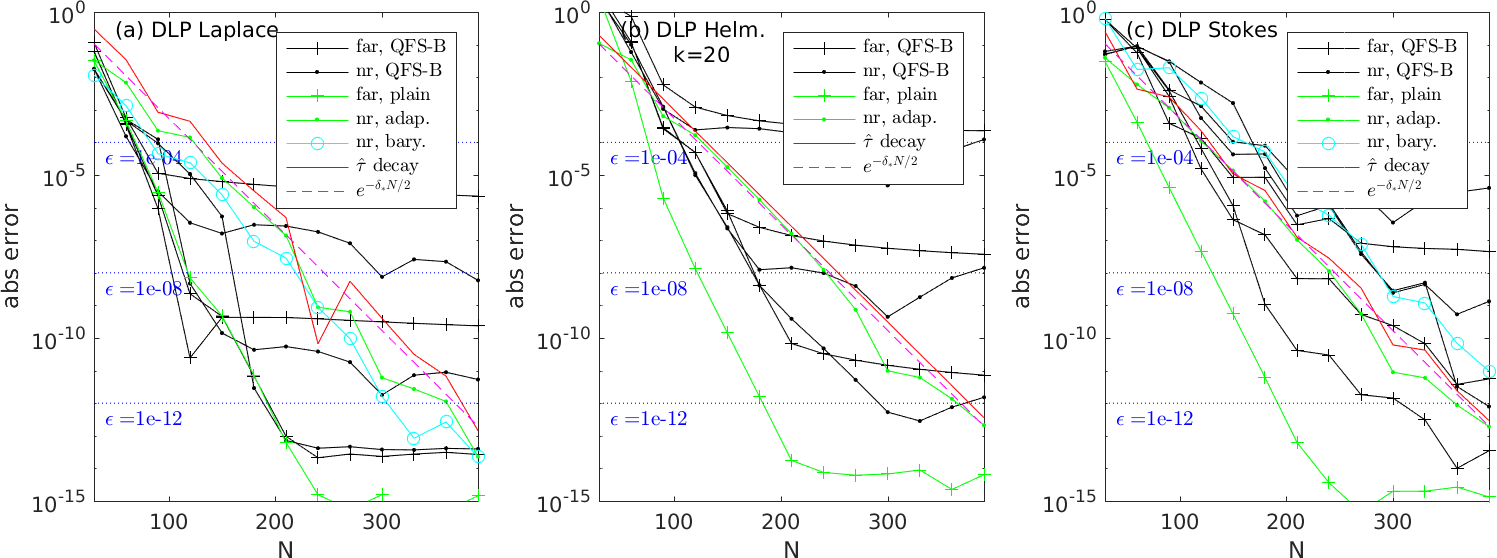}
\ca{Same as Fig.~\ref{f:BSLP} but testing evaluation of the
  double-layer potential.
}{f:BDLP}
\efi 


\subsection{Tests of QFS-B for Laplace, Helmholtz, and Stokes exterior evaluation}
\label{s:Btest}

Here we test, for three PDEs, and three tolerances $\ep$,
the error performance of QFS-B for evaluation of given single- or double-layer potentials (ie, $(\alpha,\beta) = (1,0)$ for SLP, or $(0,1)$
for DLP, in \eqref{rep}).
We use the same analytic starfish
domain as in Fig.~\ref{f:setup},
for which the maximum non-intersecting $\gamma$ separation distance
is found numerically to be $\delta_0 \approx 0.168$.
In each case we test far and near targets,
measuring errors relative to fully-converged plain or adaptive
integration, respectively. 
We work in MATLAB R2017 on a i7 CPU; calculations take only a few seconds.
We are forced to test a quite large near-boundary distance of $10^{-4}$
to retain all digits in adaptive integration (via MATLAB's {\tt integral} command), due to catastrophic cancellation in the integrand.
QFS, by contrast, can handle distances down to zero reliably.


{\bf Laplace.}
In QFS we use the pure SLP $(\ta,\tb)=(1,0)$, which is
robust by Theorem~\ref{t:qfslap}.
Recall the fundamental solution \eqref{LG}.
For each $N$ we use Algorithm~\ref{g:src}, with $\upsilon=1$
so that $P=N$ unless self-intersection triggered an increase in $P$.
At $\ep=10^{-12}$, in this domain, this was not triggered once $N\ge 150$.
Fig.~\ref{f:BSLP}(a) compares the convergence of QFS-B to other standard
methods, in a generic test case for evaluating a pure SLP,
and Fig.~\ref{f:BDLP}(a) shows the same for evaluating a pure DLP.
In both cases QFS-B shows very similar convergence
to the gold-standard plain Nystr\"om rule for the far target,
and adaptive integration of the trigonometric (spectral) interpolant
for a near target, down to the chosen tolerance $\ep$.
When errors hit $\ep$ (actually 1-2 digits below), they flatten out, as predicted by our separation choice \eqref{dalias}.
This, along with stability for larger $N$, indicates success.
A barycentric Cauchy method (see \cite{lsc2d} for SLP,
\cite{ioak,helsing_close} for DLP) is also compared for the near target,
and exhibits the same rate.

We now discuss some details about the density and convergence rates.
For the test we chose a
real-analytic density, in terms of the boundary parameter $t$,
\be
  \tau(t) \;=\; [0.5+\sin(3t+1)] \re\, \cot\frac{t-t_\ast}{2}~,\qquad   t\in[0,2\pi),
  \label{Ltau}
\ee
whose important feature is its
complex singularity location $t_\ast = 0.5 + i\delta_\ast$,
where $\delta_\ast = 0.15$ controls its distance and hence smoothness along the
real axis.
Note that $\delta_\ast$ is similar to the boundary's
$\delta_0$; this models low-frequency scattering problems, where data
smoothness is controlled by the geometry.
We have verified that the QFS performance is similar to gold-standard
methods down to $\ep$ also for other $\delta_\ast$ choices.

An estimate of the ability of $N$ boundary samples to capture any density
$\tau$ is the relative decay of its Fourier series $\tau(t) = \sum_{n\in\Z} \hat\tau_n e^{int}$ at the Nyquist frequency $n=N/2$,
which we measure by
\be
r_N[\tau] \;:=\; \frac{|\hat\tau_{N/2}|}{|\hat\tau_0|}~.
\label{foud}
\ee
This metric is included (in red) on the plots,
and compared against its asymptotic prediction $e^{-\delta_\ast N/2}$
(magenta) set by the
known singularity; the match is excellent.

\begin{rmk}[Laplace error decay rates]
  For the SLP in Fig.~\ref{f:BSLP}(a)
  we see that the Nyquist decay rate (red and pink) explains well
  the convergence of all the near-target methods. The far target
  rate is slightly faster.
  For the DLP in Fig.~\ref{f:BDLP}(a), in contrast,
  the plain Nystr\"om rule for the far target has about {\em twice}
  the Nyquist rate, and
  QFS-B achieves this faster rate for {\em both} near and far targets.
  The doubling of rate for the plain rule is
  believed to be due to the fact that for a
  distant target the kernel is as smooth as
  the geometry, so that the PTR rate $e^{-\delta_\ast N}$ of Theorem~\ref{t:ptr} is relevant, being twice the Nyquist rate.
  However, this does not explain why the rate for the SLP is less than doubled.
  We do not have an explanation for the doubling of the QFS-B DLP near rate.
  \label{r:double}
\end{rmk}

\begin{rmk}[robustness when the matching curve $\pO$ has logarithmic capacity near 1.]
  Lemma~\ref{l:0const}, in particular
  \eqref{Sigfail}, showed that numerical instability will occur when
  $C_\Omega\approx 1$: the Dirichlet matching on $\pO$ becomes
  unable to determine the total charge $\Sigma$, which is crucial
  to accurate evaluation of $u$.
  However, given a desired representation \eqref{rep}, the total charge
  is in fact known: $\Sigma = \alpha \int_\pO \tau$.
  Thus, stability is easily recovered by adding one row to each matrix
  $A$ and $B$ that enforces this condition in \eqref{ucA}--\eqref{Bsuc}.
  Specifically,
  we append to $A$ the row $\{\alpha w_j\}_{j=1}^N$ (where $w_j$ are the 
  weights in \eqref{ptrpO}), and to $B$ the row of all ones.
  We have verified that in the case of $\Omega$ the unit disk, where
  $C_\Omega=1$, this modification turns complete failure into successful
  error convergence similar to that shown above. We need not show the plots.
\label{r:logcap}
\end{rmk}

{\bf Helmholtz}.
Recall that the fundamental solution is \eqref{HG},
and that we use a combined-field QFS representation.
We again set source upsampling $\upsilon=1$.
For a complex-valued density $\tau$ we choose \eqref{Ltau} except with the Re operator removed,
and fix $k=20$ (around 8 wavelengths across the domain).
Fig.~\ref{f:BSLP}(b) compares the convergence of QFS-B
for evaluating the pure SLP to
the gold standard (as with Laplace, plain Nystr\"om for the far target, and adaptive integration of the spectral interpolant for the near target).
We again see success, meaning that the rates are similar and,
although the error at which
QFS-B saturates is 1-2 digits worse than for the Laplace case, it remains
consistent with the requested tolerance $\ep$.
We do not compare to a barycentric method, since we
know of no such published method for Helmholtz.
Fig.~\ref{f:BDLP}(b) shows similar performance for evaluating a pure DLP,
although, as in Remark~\ref{r:double}, there is a factor of two separating
the gold-standard far and near convergence rates; QFS-B falls somewhere
between the two rates.
One also sees saturation about 1 digit worse than $\ep$, indicating that
the user should set $\ep$ slightly below their desired tolerance.

{\bf Stokes}.
Recall that we use a completed QFS representation $S+D$,
and that the kernels are \eqref{SG}--\eqref{SD}.
We set viscosity to a generic near-unit value $\mu=0.7$, and choose
the vector-valued density function
\be
\btau(t) \;=\; \re \vt{e^{4i} \left(0.5+\sin(3t+1)\right) \cot\frac{t-t_\ast}{2}}{e^{5i} \left(0.5+\cos(2t-1)\right) \cot\frac{t-t_\ast}{2}}
~,
\qquad   t\in[0,2\pi),
\label{Stau}
\ee
Apart from its singularity distance $\delta_\ast=0.15$,
\eqref{Stau} is designed to be generic; eg, it has net flux
$\int_\pO \btau \cdot \n \neq 0$.
To achieve numerical and spectral stability
(see the upcoming Fig.~\ref{f:kappa}),
for this PDE we need to set source upsampling to $\upsilon=1.3$
Fig.~\ref{f:BSLP}(c) and Fig.~\ref{f:BDLP}(c) then compare QFS-B against
the same gold-standard methods used for Laplace, including
the barycentric Stokes methods introduced in \cite{lsc2d}.
With this choice, QFS-B again matches well the
gold-standard error convergence for SLP, and is close for DLP,
down to below $\ep$.
Together with stability at all $N$, this indicates success.

\begin{rmk}[Stokes breakdown for certain domains?]
  Unlike for Laplace where unit-capacity domains are easy to construct,
  we have not observed ``in the wild'' the Stokes QFS failure
  potentially allowed by Theorem~\ref{t:qfssto}.
  We also have not found literature about the possibility
of the $2\times 2$ matrix becoming singular.
For complete robustness, we have tested adding two extra rows to enforce
the known
$\bSig$ (analogous to Remark~\ref{r:logcap}): this is successful for
QFS-D, but limits near-target accuracy in QFS-B
to about $10^{-9}$, an issue that we leave for future study.
\label{r:2x2}
\end{rmk}

\subsection{Desingularized scheme using off-surface check points (QFS-D)}
\label{s:qfs-d}

We now show how only upsampled off-surface evaluations can be used to
evaluate layer potentials in the solution domain and on the boundary,
including the filling of the Nystr\"om $A$ matrix (task 2).
This makes QFS truly kernel-independent (apart from the choice
of QFS representation $(\ta,\tb)$ which may vary by PDE).

$P$ source points with separation parameter $\delta$
are chosen via Algorithm~\ref{g:src}, as in QFS-B.
Then recalling \eqref{dalias}, one chooses a check curve separation parameter
$\delta_c>0$ via
\be
\delta_c \;=\; \frac{1}{P}\log \frac{1}{\emach} - \delta
\;=\; \biggl(\frac{\log \emach}{\log \ep} - 1 \biggr) \delta
~,
\label{dc}
\ee
and if the resulting curve $\gamma_{-\delta_c}$ self-intersects
or hits $\Omega$,
$\delta_c$ is reduced to the supremum of values, $\delta_{c,0}$, for which this
no longer holds.
The number of check points is $\Nc = \lceil \upsilon_c N \rceil$,
with $\upsilon_c$ a small PDE-dependent upsampling parameter (by default 1).
The check points $\z_m$ are then
\be
\z_m \;=\; \x(t_m) + \delta_c \, \|\x'(t_m)\| \,\n(t_m)
+ \delta_c^2 \x''(t_m)
~,  \qquad t_m=2\pi m/\Nc~, \qquad m=1,\dots, \Nc~,
\label{chk}
\ee
noting the sign change which approximates an imaginary
translation by $-\delta_c$.
Fig.~\ref{f:setup}(d) shows source and check points when $\ep=10^{-12}$;
note that the check points are around three times closer to $\pO$ than
the source points.

\begin{rmk}[check point distance $\delta_c$]  
  The heuristic observation behind \eqref{dc} is that when collocation
  is performed to match potential values on $\gamma_c$ to $\bigO(\emach)$,
  there is an exponential deterioration of errors as one moves off this
  curve back towards $\pO$, as expected because numerical analytic
  continuation as a PDE solution is involved.
  Its rate is such that $\bigO(1)$ error is reached
  by $\gamma$, the source curve. Thus in order to insure the user-requested
  tolerance $\ep$ on $\pO$, the ratio condition
  \be
  \frac{\delta}{\delta+\delta_c} \;\ge\;
  \frac{\log \ep}{\log \emach}
  \label{ratio}
  \ee
  must hold.
  For example, when $\ep=10^{-12}$ and $\emach\approx 10^{-16}$, the right-hand
  side is $3/4$, leading to $\delta_c \le \delta/3$.
  Treating the condition as an equality leads to \eqref{dc}.
\end{rmk}

The $\Nc\times P$
``check from source'' matrix, which for QFS-D we now denote by $E$,
is then filled with elements
\be
E_{mj} = \ta G(\z_m,\y_j) + \tb \frac{\partial G(\z_m,\y_j)}{\partial \n_{\y_j}}~,
\qquad m=1,\dots,\Nc,\; j=1,\dots,P~,
\label{E}
\ee
where as before we state only the scalar case.
The action of $E$ is sketched in Fig.~\ref{f:setup}(d).
Since the separation parameter between source and check curves
is no more than $N^{-1}\log\emach^{-1}$, barring small upsampling factors,
Remark~\ref{r:cond} also applies to this $E$ matrix.

The final ingredient is to evaluate $u$ accurately on $\gamma_c$ to get $u_c$.
The user-supplied $N$ nodes are rarely adequate for this,
but plain quadrature from an {\em upsampled} set of nodes
can be very accurate. The following result, for the PTR case,
enables our choice of boundary upsampling factor.
\begin{thm}{\cite[Thm.~2.3, 2.9]{ce}}
  Let $\pO$ be analytic, and $Z$ be the analytic continuation of its
  complex parameterization, with $Z$ analytic and bijective in some strip $I_0=\{t\in\C: |\im t|<\delta_0\}$. Let $\tau(t)$, $t\in[0,2\pi)$, be an analytic
    density that continues analytically in $I_0$.
    Recalling \eqref{imsh}, let $\x = \x(t,\delta)$,
    for arbitrary $t\in\R$ and imaginary shift $\delta \neq 0$,
    be a target point.
    Then the error in applying the PTR \eqref{ptrpO} to
    evaluation of the Laplace layer potential \eqref{rep} at this target point
    is
    $\bigO(e^{-|\delta| N})$, as $N\to\infty$.
  \label{t:ce}
\end{thm}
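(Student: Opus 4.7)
The plan is to reduce to Davis's theorem (Theorem~\ref{t:ptr}) applied to the parametrized integrand as a function of the boundary variable $s$. Specifically, substituting $s$ for arclength in \eqref{rep} and combining with \eqref{ptrpO}, the PTR approximates
\[
I(t,\delta) \;=\; \int_0^{2\pi} K(s)\,ds~,\qquad K(s) \;:=\; \Bigl[\alpha G(\x(t,\delta),\x(s)) + \beta\,\partial_{\n_s} G(\x(t,\delta),\x(s))\Bigr]\tau(s)\,\|\x'(s)\|~.
\]
By hypothesis $\tau(s)$ and the geometric factor $\|\x'(s)\|$ (hence also $\n(s)$) extend holomorphically in $I_0=\{|\im s|<\delta_0\}$, so the only obstruction to extending $K$ analytically in $s$ comes from the kernel. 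Once the strip of analyticity of $K$ is established, Davis's theorem yields the claimed rate.

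The key step is to track the kernel's singularity in the complex $s$-plane. Identify $\R^2 \cong \C$ via $Z(s)=x_1(s)+ix_2(s)$ as in \eqref{imsh}; the target point corresponds to the complex number $w:=Z(t+i\delta)$. For real $s$ one has the splitting
\[
-2\pi\, G(\x(t,\delta),\x(s)) \;=\; \tfrac{1}{2}\log(Z(s)-w) + \tfrac{1}{2}\log\bigl(\overline{Z}(s)-\bar w\bigr)~,
\]
where $\overline{Z}$ denotes the coefficient-conjugated series, so that $\overline{Z}(s)=\overline{Z(s)}$ on $\R$. The first summand extends holomorphically to the upper half-plane slice of $I_0$, with a logarithmic branch point wherever $Z(s)=w$; since $Z$ is bijective on $I_0$, the unique such point (modulo $2\pi$) is $s=t+i\delta$. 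The second summand extends holomorphically to the lower half-plane slice of $I_0$, with a conjugate singularity at $s=t-i\delta$. Combining, $K$ is holomorphic in the strip $\{|\im s|<|\delta|\}$ (intersected with $I_0$, which contains this strip because $|\delta|<\delta_0$ is required implicitly via the hypothesis that $\x(t,\delta)$ is a valid target), and the nearest singularities sit exactly at distance $|\delta|$ from $\R$. The DLP kernel differentiates this expression with respect to $\n_s$, multiplying by a factor that is meromorphic (in fact analytic) in the same strip; it therefore shares the same singularity locus and distance $|\delta|$.

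A uniform-boundedness check on any closed sub-strip $|\im s|\le |\delta|-\eta$ is needed to satisfy the hypothesis of Theorem~\ref{t:ptr}: periodicity of $K$ is automatic from the $2\pi$-periodicity of $Z$, $\tau$, and $\n$, and the kernel stays bounded on such a sub-strip since $|Z(s)-w|$ is bounded below there (by continuity and the fact that the only zero lies at $s=t+i\delta$ on the boundary of the sub-strip). Theorem~\ref{t:ptr} then gives error $\bigO(e^{-(|\delta|-\eta)N})$ for every $\eta>0$, which after absorbing the $\eta$ into the implicit prefactor (or taking $\eta\to 0$ with a mild algebraic loss) yields the stated $\bigO(e^{-|\delta|N})$.

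The main obstacle is the clean treatment of the $\log|\cdot|$ term: because $|Z(s)-w|$ is not holomorphic in $s$, one must split the log into the two analytic pieces above and argue about them on their respective half-planes, then recombine. A secondary nuisance is verifying the uniform bound on $K$ up to the edge of the strip (rather than merely pointwise analyticity), which is handled by working on any closed sub-strip and letting it approach the full strip; this is the source of any logarithmic or algebraic prefactor that might appear but is suppressed in the $\bigO$ notation.
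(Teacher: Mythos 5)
The paper does not actually prove this statement: it is quoted from \cite{ce}, with the remark that the proof there works by \emph{modifying} Theorem~\ref{t:ptr} so that it can handle an integrand with a single singularity or branch cut inside the strip. Your identification of the analytic structure is in the spirit of that route and is essentially correct: complexifying in the source parameter, the integrand is $2\pi$-periodic and analytic in $|\im s|<|\delta|$, with a logarithmic branch point (SLP) or a simple pole (DLP) at $s=t\pm i\delta$, located via the bijectivity of $Z$ on $I_0$ and the Schwarz-reflected factor $s\mapsto\overline{Z(\bar s)}$. Two minor cautions: the two log summands are individually \emph{not} $2\pi$-periodic when the target is enclosed by the curve (their winding numbers are $\pm1$), so only the recombined kernel may legitimately be fed to any PTR theorem (you only use the split to locate singularities, which is fine, but the phrase about applying the pieces ``on their respective half-planes'' would not survive scrutiny); and the analyticity of the speed factor $\|\x'(s)\|$ in the strip needs $Z'\neq0$ (from injectivity) plus a periodic branch of the square root of $Z'(s)\overline{Z'(\bar s)}$, a step you gloss over.

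The genuine gap is the final quantitative step. Theorem~\ref{t:ptr} requires boundedness on the \emph{closed} strip, so you can only invoke it on $|\im s|\le|\delta|-\eta$, where the sup of the integrand grows like $\log(1/\eta)$ (SLP) or $1/\eta$ (DLP). For fixed $\eta$ this gives $\bigO(e^{-(|\delta|-\eta)N})$, and optimizing $\eta\sim 1/N$ gives $\bigO(N e^{-|\delta|N})$ for the DLP (or $\bigO(\log N\, e^{-|\delta|N})$ for the SLP): an $N$-dependent factor cannot be ``absorbed into the implicit prefactor,'' so your argument as written proves a strictly weaker bound than the asserted $\bigO(e^{-|\delta|N})$. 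The fix --- and what the cited proof does --- is to avoid the sup-norm bound altogether: use the exact PTR aliasing identity, error $=\sum_{m\neq0}\hat K_{mN}$, together with the coefficient decay $|\hat K_n|\le C e^{-|\delta||n|}$ (with an extra $1/|n|$ gain at the log branch point), obtained by shifting the Fourier-coefficient contour to $|\im s|=|\delta|$ and using that the edge singularity is integrable; summing the geometric series then yields the clean rate. Equivalently, one can push the contour in the cotangent/residue representation of the PTR error up to the singular line and estimate the pole or branch-cut contribution directly. Without some such modification of the Davis argument, the stated $\bigO(e^{-|\delta|N})$ does not follow.
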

Thus the exponential convergence rate with the number of boundary nodes is the {\em complexified parametric distance of the target from the source curve}.
The mechanism is the same as the upper case in \eqref{mfsrates}.
Its proof uses a modification of Theorem~\ref{t:ptr} to handle
integrands with a single singularity or branch cut in the strip (annulus).
The same rate is conjectured (and numerically verified)
for Helmholtz \cite{ce} and Stokes \cite{junwang}.

Applying Theorem~\ref{t:ce} (ignoring prefactors),
to reach full accuracy $\emach$ at the
check point imaginary translation of $\delta_c$, one needs
$\tilde{N} = \lceil \rho N \rceil$
boundary nodes, where the boundary upsampling factor $\rho$ is
\be
\rho = \max \left[ \frac{1}{\delta_c N} \log \frac{1}{\emach}, 1 \right]~.
\label{rho}
\ee
Here the max prevents downsampling, 
which would be wasteful.
For example, $\ep=10^{-4}$ results in $\rho\approx 1.3$, while
$\ep=10^{-12}$ results in $\rho\approx 4.3$.
The vector of check potentials is then evaluated by the upsampled plain rule,
\be
\u_c = C \btau~, \qquad \mbox{ where } \quad
C := \tilde{C} L_{\tilde{N} \times N}~,
\label{ucC}
\ee
where $\tilde{C}$ is an ``check from upsampled boundary'' matrix
with elements
\be
C_{mj} = \ta G(\z_m,\tilde\x_j) + \tb \frac{\partial G(\z_m,\tilde\x_j)}{\partial \n_{\tilde\x_j}}~,
\qquad m=1,\dots,\Nc,\; j=1,\dots,\tilde{N}~,
\label{C}
\ee
where $\tilde\x_j = \x(2\pi j/\tilde N)$, $j=1,\dots,\tilde N$,
are spectrally upsampled boundary nodes.
The matrix $L_{\tilde{N} \times N}$ in \eqref{ucC} is a standard
spectral upsampling matrix with elements
\be
(L_{\tilde{N} \times N})_{lj} = \frac{1}{N} \phi_N ( 2\pi [l / \tilde N - j/N ])~,
\qquad l=1,\dots,\tilde N,\; j=1,\dots,N~,
\label{L}
\ee
where $\phi_N(s):= 1 + 2 \sum_{k=1}^{N/2-1} \cos ks + \cos (N/2)s$ is a
slight variant of the Dirichlet kernel function,
and we took the case that $N$ and $\tilde N$ are both even.
The upsampled boundary nodes (blue dots) and action of the $\Nc\times N$
evaluation matrix $C$ in \eqref{ucC} are sketched in Fig.~\ref{f:setup}(d).

The dense factorization (precomputation stage) for QFS-D is very similar
to QFS-B. In the simplest version one takes the SVD
\be
U\Sigma V^\ast = E
\label{svdE}
\ee
then fills
\be
Y = V \Sigma^{-1}, \hspace{.5in} Z = U^* C~,
\label{YZC}
\ee
so that new density vectors may be converted
to source vectors as before by \eqref{applyYZ}.
Algorithm~\ref{g:qfs-d} summarizes these two steps.
The resulting sources may then be used to evaluate the potential
everywhere via \eqref{qfsu}.

\begin{algorithm}[t] 
  \algrenewcommand\algorithmiccomment[2][\footnotesize]{{#1\hfill\(\triangleright\) #2}}  
  \caption{QFS-D (desingularized) for single body, tasks 1 and 2}
\begin{algorithmic}[1]
  \Procedure{QFSDprecompute}{$N$ nodes $\x_j$ describing $\pO$,
    tolerance $\ep$,
    mixture $(\alpha,\beta)$,
    source and check upsampling parameters $\upsilon,\upsilon_c \ge 1$}
\Comment{set-up, $\bigO(N^3)$ work}
    \State Use the PDE type to choose robust QFS mixture $(\ta,\tb)$ as in Sections~\ref{s:thL}--\ref{s:thHS}.
    \State Choose $P$ source points $\y_j$ (if $d=2$ use Algorithm~\ref{g:src})
    \State Choose $\Nc$ check points $\z_m$ (if $d=2$ use \eqref{dc}--\eqref{chk})
    \State Choose boundary upsampling factor $\rho$ via \eqref{rho}
    \State Fill matrices: \,``check from source'' $E$ via \eqref{E}\\
     \makebox[.92in]{}``check from boundary'' $C$ via upsampling and matrix product \eqref{ucC}--\eqref{L}\\
     \makebox[.92in]{}``boundary from source'' $B$ via \eqref{B}
    \State Take SVD of $E$ then form $Y$ and $Z$ via \eqref{YZC}
    \Comment{(or LU as in Remark~\ref{r:LU})}
    \State Form Nystr\"om self-interaction matrix $\tilde A$ via \eqref{tA}
    \State \textbf{return} $Y$, $Z$, $\tilde A$
    \Comment{(or \textbf{return} $L$, $U$, $\bar{P}C$, $\tilde A$; Remark~\ref{r:LU})}
    
    \EndProcedure
    
    \Procedure{QFSDapply}{$Y$,$Z$,vector $\btau$ of $N$ density samples}
    \Comment{compute sources, $\bigO(N^2)$}
    \State \textbf{return} $\bsig = Y (Z \btau)$
\Comment{the layer potential $u(\x)$ may now be evaluated via \eqref{qfsu}}
    \EndProcedure

  \end{algorithmic}
  \label{g:qfs-d}
\end{algorithm}   

Finally, an approximation $\tilde A$ to the Nystr\"om self-interaction
matrix (including the exterior limit $\frac{\alpha}{2} I$ term due to the jump relation) is $\tilde A = BX$,
recalling \eqref{BXA}, with $B$ defined (in the scalar case) by \eqref{B},
and $X=YZ$.
While forming $X$ then $BX$ is adequate for a few digits of accuracy,
full accuracy requires reordering as
\be
\tilde A \; = \; (BY) Z~,
\label{tA}
\ee
to avoid catastrophic cancellations as in Sec.~\ref{s:qfs-b}.
After a couple of remarks, we proceed to numerical tests of the method.

\begin{rmk}
  The reader may wonder whether at the continuous level QFS-D has the
  same justification as QFS-B.
  Although, for simplicity, Theorem~\ref{t:qfslap}
  was phrased assuming $\gamma_c=\pO$, ie, for QFS-B, it
  easily generalizes to QFS-D by replacing $\pO$ by
  $\gamma_c$, and noting that in the proof \eqref{Lmfsrep} also
  applies throughout the exterior of $\gamma$, in particular throughout
  the exterior of $\Omega$ and on $\pO$.
  Theorems~\ref{t:qfshelm} and \ref{t:qfssto} similarly generalize.
\end{rmk}

A rigorous justification for the discrete convergence of QFS-D,
in particular the ratio \eqref{ratio} involving
extrapolation in finite-precision arithmetic,
we leave for future work.

\begin{rmk}[LU, and the case of rectangular $E$]
  So far, for simplicity, we described the use of the
  SVD \eqref{svdE}.
  However, 
  partially-pivoted
  LU is faster for precomputation, and is still stable by
  Remark~\ref{r:cond}.
  In the square case $P=\Nc$, one factorizes
  $\bar{P}E=LU$, where $\bar{P}$
  is a permutation matrix, and stores the factors and $C$.
  The apply step becomes $\bsig = U^{-1}(L^{-1}(\bar{P} C \btau))$
  where the parentheses and
  inverses indicate triangular back-substitutions done on the
  fly, taking $\bigO(N^2)$ work.
  The Nystr\"om matrix form \eqref{tA} becomes
  $\tilde A = (B U^{-1})(L^{-1}(\bar{P} C))$, where inverses
  require back-substitutions for full stability.
  If $P>\Nc$, as can occur when the source curve moves
  closer in Algorithm~\ref{g:src},
  we spectrally downsample to $\Nc$ source points, LU-factorize the
  resulting $\Nc \times \Nc$ matrix,
  then upsample $\bsig$ at  the end of the apply step.
  \label{r:LU}
\end{rmk}

\bfi 
\ig{width=\textwidth}{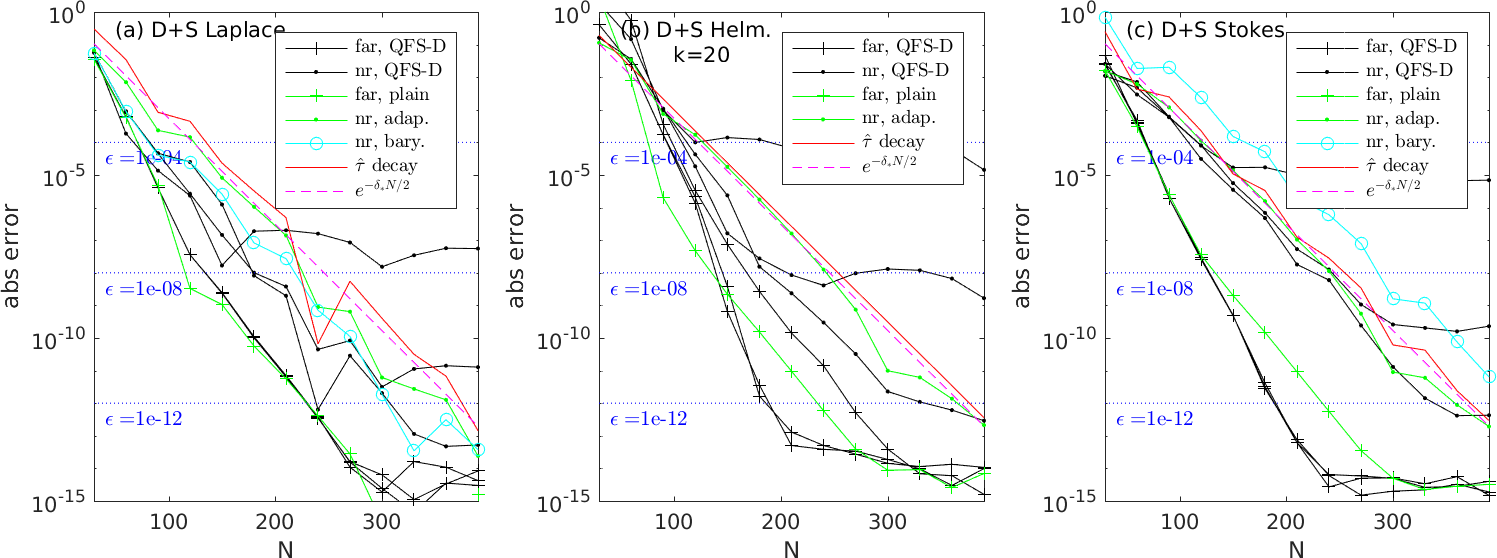}
\ca{Error performance of the desingularized
  method (QFS-D) in 2D, testing evaluation of the
  layer potential mixture ${\cal D} + {\cal S}$.
  All other details are as for Fig.~\ref{f:BSLP}.
}{f:DmLP}
\efi 

\bfi 
\ig{width=\textwidth}{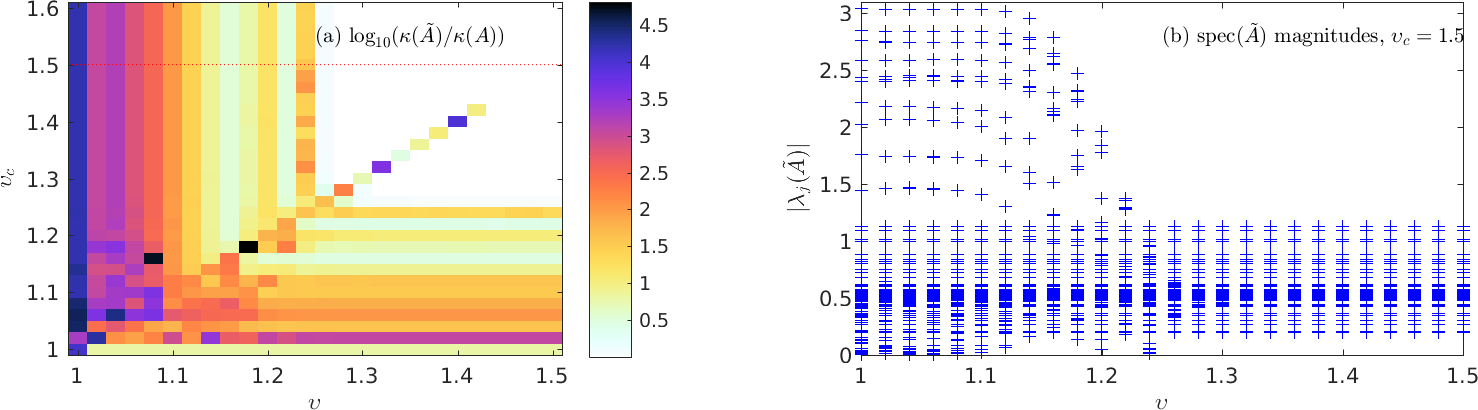}
\ca{Spectral properties of the QFS-D Nystr\"om matrix $\tilde A$
  compared to the gold-standard Kress matrix $A$, discretizing the
  ``completed'' Stokes operator $\half + D +S$.
  The number of nodes $N=200$ and tolerance $\ep=10^{-12}$ are fixed,
  for the star-shaded domain shown in Fig.~\ref{f:setup}.
  (a) shows a sweep of the condition number ratio over the source
  ($\upsilon$) and check ($\upsilon_c$) upsampling ratios.
  (b) shows the eigenvalue magnitudes as function of $\upsilon$ for fixed
  $\upsilon_c$.
  QFS-D also internally uses the representation $S+D$.
}{f:kappa}
\efi 

\subsection{Tests of QFS-D for Laplace, Helmholtz, and Stokes PDEs}
\label{s:Dtest}

Now we test the desingularized method for evaluation in 2D,
and study the conditioning of the resulting Nystr\"om matrix
$\tilde A$.

Our layer-potential evaluation tests are
shown in Fig.~\ref{f:DmLP}.
They use similar set-ups to those from Sec.~\ref{s:Btest},
with the following differences:
i) we combine the desired layer potentials to test
the mixture $(\alpha,\beta)=(1,1)$, which is expected to follow
the worse of either $S$ or $D$ alone;
ii) in the Stokes case we choose check upsampling
$\upsilon_c=1.5$ (recall that $\upsilon=1.3$ for source upsampling).
For this star-shaped domain
the supremum of acceptable
check point distances is $\delta_{c,0} \approx 0.09$,
controlled by its Schwarz singularities \cite{Da74}.

Fig.~\ref{f:DmLP} shows that for all three PDEs
the performance of QFS-D is better than QFS-B, sometimes exceeding
the convergence rate of the gold-standard quadrature methods.
A peculiar behavior for the far target is that convergence continues
down to at least $10^{-13}$ regardless of the tolerance $\ep$;
we believe this is due to the larger source-check distance
$\delta+\delta_c$.
The near target errors saturate around $\ep$, as expected.
The Stokes case clearly shows that the
Nyquist Fourier decay of the density controls
the near-target rate for all methods,
and that there is also a common rate
for the far target, about twice the near rate.
For Stokes, QFS-D is 1-2 digits better than the barycentric
method of \cite{lsc2d} at the near target.

The convergence of iterative methods for the linear system \eqref{linsys} is
sensitive to the spectrum, so it is
crucial that any Nystr\"om quadrature scheme
well approximate the spectral properties of the operator \eqref{bie}.
For Laplace and Helmholtz the condition numbers are very
close to those from the gold-standard Kress scheme,
regardless of upsampling factors $\upsilon\ge 1$ and $\upsilon_c\ge 1$, thus we recommend that both remain at 1.

However, Stokes demands upsampling, as we now show.
The ``completed''
operator $A=\half + D + S$, commonly used for exterior no-slip BVPs
\cite{hsiao85,hebeker,biros04,gonzalez09,wu2019}
is well conditioned (eg, see proof of Lemma~\ref{l:stoimp}).
In Fig.~\ref{f:kappa}(a) we compare the condition number
$\kappa(\tilde A)$ of its Nystro\"om matrix obtained by QFS-D
to the
condition number $\kappa(A)\approx 7.2$ for $A$ filled by
Kress quadrature.
The convergence (the white-colored region) of the QFS-D
condition number in the $\upsilon$ and $\upsilon_c$ plane justifies our earlier choice of $\upsilon=1.3$ and $\upsilon_c=1.5$.
Curiously, there is a ill-conditioned ``diagonal'' region $\upsilon_c \approx \upsilon$ up to about 1.42.
All such behavior is believed to be due to eigenvalues of $E$
passing through zero.
Panel (b) verifies convergence of the spectrum of $\tilde A$
(note the expected clustering at $1/2$),
showing a good, and stable, spectrum for $\upsilon\ge 1.3$.


To verify the correctness of $\tilde A$,
we now turn to multi-body BVP applications.

\section{Application to large-scale 2D boundary-value problems}
\label{s:big2d}

In this section we measure the accuracy, convergence, and speed of the proposed method, in the context of FMM-accelerated algorithms for solving
larger-scale BVPs.
We first solve exterior Helmholtz Dirichlet (sound-hard) scattering problems with both a moderate (100) and large (1000) number of bodies.
We then solve a forced Stokes flow past a moderate number of inclusions in a confined geometry.
We emphasize that the same QFS method,
with minimal changes, is used for both PDEs.

\subsection{Geometry generation}
\label{section:large_scale:geometry}

For all problems presented in this section, the
solution domain is the exterior of
$\Omega:=\bigcup_{i=1}^K \Omega_i$, the union of many smooth bodies.
For the Stokes case only, in order to drive nontrivial flows,
the solution domain will also be bounded by an enclosing circle.
It is critical for testing the near-boundary layer-potential evaluations
that many of the bodies are {\em nearly touching}, being approximately separated
by a controllable distance $\ddist$.
Thus in Appendix~\ref{a:geometry} we present a method
which produces $K$ bodies in a prescribed but random layout while insuring that:
\begin{enumerate}
  \item all bodies are disjoint ($\Omega_i \cap \Omega_j=\{\emptyset\}$ for $i\neq j$),
  \item at least some
    pairs of bodies have a minimum separation distance in the interval $(\ddist,1.1\ddist]$, and
  \item there is a controllable amount of polydispersity (variation of body size).
\end{enumerate}

\subsection{Exterior Dirichlet Helmholtz (sound-hard) scattering BVP}
\label{section:large_scale:helmholtz}

In this section we solve the BVP \eqref{Hpde}--\eqref{SRC} in the exterior of
$\Omega$, a collection of $K$ smooth objects, each with boundary $\pO_i$.
The data $f=-u_\tbox{inc}|_\pO$ derives from an incident
plane wave $u_\tbox{inc}(\x) = e^{ik\mbf{d}\cdot\x}$ with direction
$\mbf{d} = (1,0)$.
Once the BVP solution $u$ is solved for,
we plot the physical solution $u_\tbox{inc} + u$,
as in the single-body example Fig.~\ref{f:setup}(a).

We use the standard indirect CFIE \cite{kress85}
representation \eqref{rep} with mixture
$(\alpha,\beta) = (-ik,1)$, where $k$ is the wavenumber,
ie the 2nd-kind BIE
$$
(\half + D - ikS)\tau \; = \; f~.
$$
This has the $K\times K$ block form
\be
\begin{bmatrix}
    A^{(1,1)} & A^{(1,2)}
    & \dots\\
    A^{(2,1)} & A^{(2,2)} & \dots\\
    \vdots & \vdots & \ddots
  \end{bmatrix}
  \begin{bmatrix}
    \tau^{(1)} \\
    \tau^{(2)} \\
    \vdots
  \end{bmatrix}
  =
  \begin{bmatrix}
    f^{(1)} \\
    f^{(2)} \\
    \vdots
  \end{bmatrix}~,
  \label{helmblkie}
\ee
where the interaction operator to body $i$ from body $j$ is
$A^{(i,j)} = \delta_{i,j}/2 + D_{\pO_i,\pO_j} - ik S_{\pO_i,\pO_j}$,
the subscripts on operators indicating their target, source curves.
We now discretize this BIE using an $N_i$-node periodic trapezoid rule
on the $i$th body (postponing for now the choice of $N_i$),
giving $N=\sum_{i=1}^K N_i$ total unknowns.
For convenience, and without ambiguity, we also use the
above notation for the discretized system.

Quadrature precomputation is as follows.
We use Algorithm~\ref{g:src} for source and check points,
but for more accuracy we add to
lines 3--4 the condition that the local ``speed''
(magnitude of $t$-derivative of the function in \eqref{curve})
be no less than half its corresponding value $\|\x'(t)\|$ on $\pO$.
%
For each body $\Omega_i$ we independently run {\tt QFSDprecompute} from Algorithm~\ref{g:qfs-d} to fill its self-interaction Nystr\"om matrix $A^{(i,i)}$ (from now on we drop the tilde
notation).
Following Remark~\ref{r:LU} we store the LU-factors $L^{(i)}$, $U^{(i)}$ and $\bar{P}^{(i)}C_i$, so that on-the-fly back-substitution
is used whenever {\tt QFSDapply} is called.

Since each body is simple and acoustically small,
each $A^{(i,i)}$ is relatively well conditioned, so a plain iterative
solution of \eqref{helmblkie} is possible.
However, we find that the following
standard ``one-body'' block-diagonal preconditioning
can halve the iteration count.
One solves, via non-restarted GMRES \cite{gmres} with prescribed tolerance, the stacked preconditioned density vector $\tilde\tau := \{\tilde\tau^{(i)}\}_{i=1}^K$, in
\be
  \begin{bmatrix}
    I_{N_1} &  A^{(1,2)} (A^{(2,2)})^{-1}
    & \dots\\
    A^{(2,1)} (A^{(1,1)})^{-1} & I_{N_2} & \dots\\
    \vdots & \vdots & \ddots
  \end{bmatrix}
  \begin{bmatrix}
    \tilde \tau^{(1)} \\
    \tilde \tau^{(2)} \\
    \vdots
  \end{bmatrix}
  =
  \begin{bmatrix}
    f^{(1)} \\
    f^{(2)} \\
    \vdots
  \end{bmatrix}~,
  \label{helmblkiep}
  \ee
  where $I_{N_i}$ indicates the $N_i\times N_i$ identity matrix.
Then the density vectors on each body are recovered by
$\tau^{(i)} = (A^{(i,i)})^{-1} \tilde\tau^{(i)}$.
This corresponds to preconditioning \eqref{helmblkie} from the right
by a matrix containing only the diagonal blocks $(A^{(i,i)})^{-1}$.
Here, block inverses are dense and stored for later use.

What remains is to describe the FMM-accelerated
{\em matrix-vector multiply} performed in each
GMRES iteration.
This applies the $N\times N$ matrix in
\eqref{helmblkiep} to a vector $\tilde\tau$, as follows:
\ben
\item
  Split the preconditioned density $\tilde\tau$ into vectors $\tilde\tau^{(i)}$.
\item
  Recover actual densities $\tau^{(i)} = (A^{(i,i)})^{-1} \tilde\tau^{(i)}$
  for each body $i=1,\dots,K$.
\item
  Compute QFS strength vectors $\bsig^{(i)}$ from $\tilde\tau^{(i)}$
  via {\tt QFSDapply},
  for each body $i=1,\dots,K$.
\item Send the stack of strengths $\{\bsig^{(i)}\}_{i=1}^K$
  with corresponding
  QFS source locations $\{\{\y^{(i)}_j\}_{j=1}^{N_i}\}_{i=1}^K$
  into a single point FMM call
  with all $N$ boundary nodes as targets. 
\een
Note that the Helmholtz FMM must include monopoles and dipoles scaled as in the QFS representation $D-ikS$.

With the iterative solution $\tilde\tau$ complete,
evaluation of the solution $u$ at desired target points
proceeds by doing exactly the above steps 1--4, except with
the desired FMM targets instead of the boundary nodes in step 4.
Complicated bookkeeping is absent (by comparison, in \cite{dpls,junwang},
on-boundary, near-boundary, and far targets had to be handled separately).

We use a Julia implementation of Algorithm~\ref{g:qfs-d}
that makes efficient use of a multi-core shared-memory machine, plus a
custom Julia interface to the multithreaded library FMMLIB2D \cite{HFMM2D}.


\begin{rmk}[Sparse matrix storage of quadrature corrections to the FMM]
  It is possible to fill a sparse matrix whose action on
  $\tau$ applies all self- and close-quadrature corrections to a
  point FMM between boundary nodes alone \cite{fmmbie3d}.
  One advantage of our on-the-fly approach is that storage does not grow even with many target points in the near-field. 
  We show below that the cost it adds to the FMM is usually minor.
  \label{r:sparse}
\end{rmk}

We find that
when there are many wavelengths across the entire system, or boundaries are close, or the number $K$ of bodies grows, the GMRES convergence
rate for \eqref{helmblkiep} becomes progressively poorer.
This motivates two test cases: a moderate problem
(which allows detailed comparison with the Kress scheme) with $K=100$ quite near-to-touching inclusions solved to a high tolerance ($10^{-12}$),
then a larger problem with $K=1000$ with larger $\ddist$ solved to lower tolerance ($10^{-10}$).
In both cases, body centers lie near
two entwined spiral curves, each generated by the function
$\r:\R \to \R^2$,
\be
\r(s) := (as + b)^p(\cos(s+\xi),\sin(s+\xi))~,
\label{eq:spiral_definition}
\ee
for various values $a$, $b$, and $p$.
The first spiral has $\xi=0$, the second $\xi=\pi$.
Each ``arm'' is leaky for waves, allowing partial
resonance, hence keeping the iteration count tolerable.

\subsubsection{Computers}
\label{section:computers}
Throughout these examples, timing benchmarks will be measured on three computers:
\begin{enumerate}
  \item A Macbook Pro with 16GB of RAM and a single quad-core Intel(R) Core(TM) i7-8569U CPU @ 2.80GHz,
  \item A workstation with 128GB of RAM and two six-core Intel(R) Xeon(R) CPU E5-2643 v3 @ 3.40GHz,
  \item A single compute node with 1TB of RAM and two AMD EPYC 7742 64-Core Processors @ 3.34GHz.
\end{enumerate}
We will henceforth refer to these machines as the \emph{Macbook}, \emph{Workstation}, and \emph{AMD Node}, respectively. For some of the benchmarks, we will force the computer to run all computations serially; this is done by setting the environment variables {\tt OMP\_NUM\_THREADS}, {\tt MKL\_NUM\_THREADS}, and
{\tt BLAS\_NUM\_THREADS} to 1.

\subsubsection{Moderately sized Helmholtz problem}
\label{section:large_scale:helmholtz:medium}  

We pick $K=100$ bodies with centers $\mbf{c} = \r(s) + \cnoise$,
where
$\r(s)$ is the spiral \eqref{eq:spiral_definition} with parameters $a=3$ and $b=p=1$,
$s$ is uniform random in $[\pi, 5\pi/2]$, and
$\cnoise$ is uniform random in $[-1,1]^2$.
(Specifically, this formula for $\c$ acts as the
 {\tt randomcenter} function in Appendix~\ref{a:geometry}.)
The base radius is $r_0=1$, and $\ddist=0.02$,
so that the ratio of perimeter to $\ddist$
(called $f_\tbox{clup}$ in \cite{helsing_close,dpls}) is about 300.
The wavenumber is $k=10$; the geometry
is about 86 wavelengths across.
Both the GMRES tolerance and QFS tolerance $\ep$ were $10^{-12}$.

The baseline number $N_i$ of quadrature nodes on the $i$th body
is chosen such that $2\pi R_i/N_i \approx \sqrt{\ddist}$,
where $R_i$ is the maximum body radius;
this is motivated by asymptotics that the smoothness scale of the
density varies as the square-root of the distance between curves
\cite{sanganimo} \cite[Ex.~1]{junwang}.
For convergence studies, larger $N_i$ are generated simply as integer
multiples of this,
and the {\em average $N$ per body}, $N/K$, is reported.
The geometry and physical solution $u_\tbox{inc}+u$
is shown in \Cref{figure:helmholtz_medium:solution}(a).

We compare QFS to the use of
Helmholtz Kress quadratures \cite{kress91} on the $N_i$ nodes,
combined with plain quadrature from upsampled boundary nodes
to handle nearby targets. Here boundaries are upsampled aggressively
so that their target error (recalling Theorem~\ref{t:ce}) is around $10^{-16}$.
This makes the Kress scheme very expensive, so we do not even
report CPU times for it.

\begin{figure}  
  \centering
  \begin{subfigure}[c]{0.45\textwidth}
    \centering
    \includegraphics[width=\textwidth]{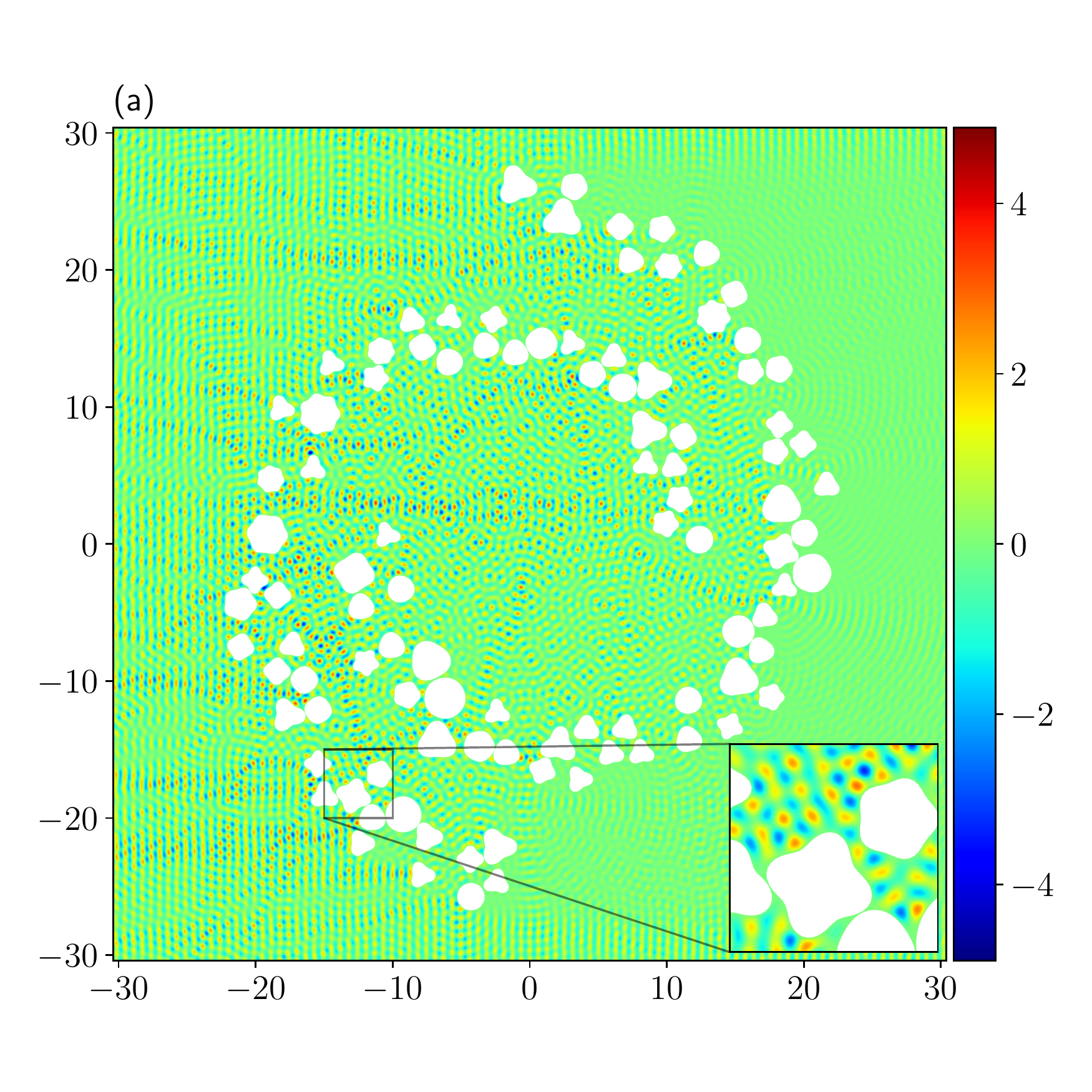}
  \end{subfigure}
  \begin{subfigure}[c]{0.45\textwidth}
    \centering
    \includegraphics[width=\textwidth]{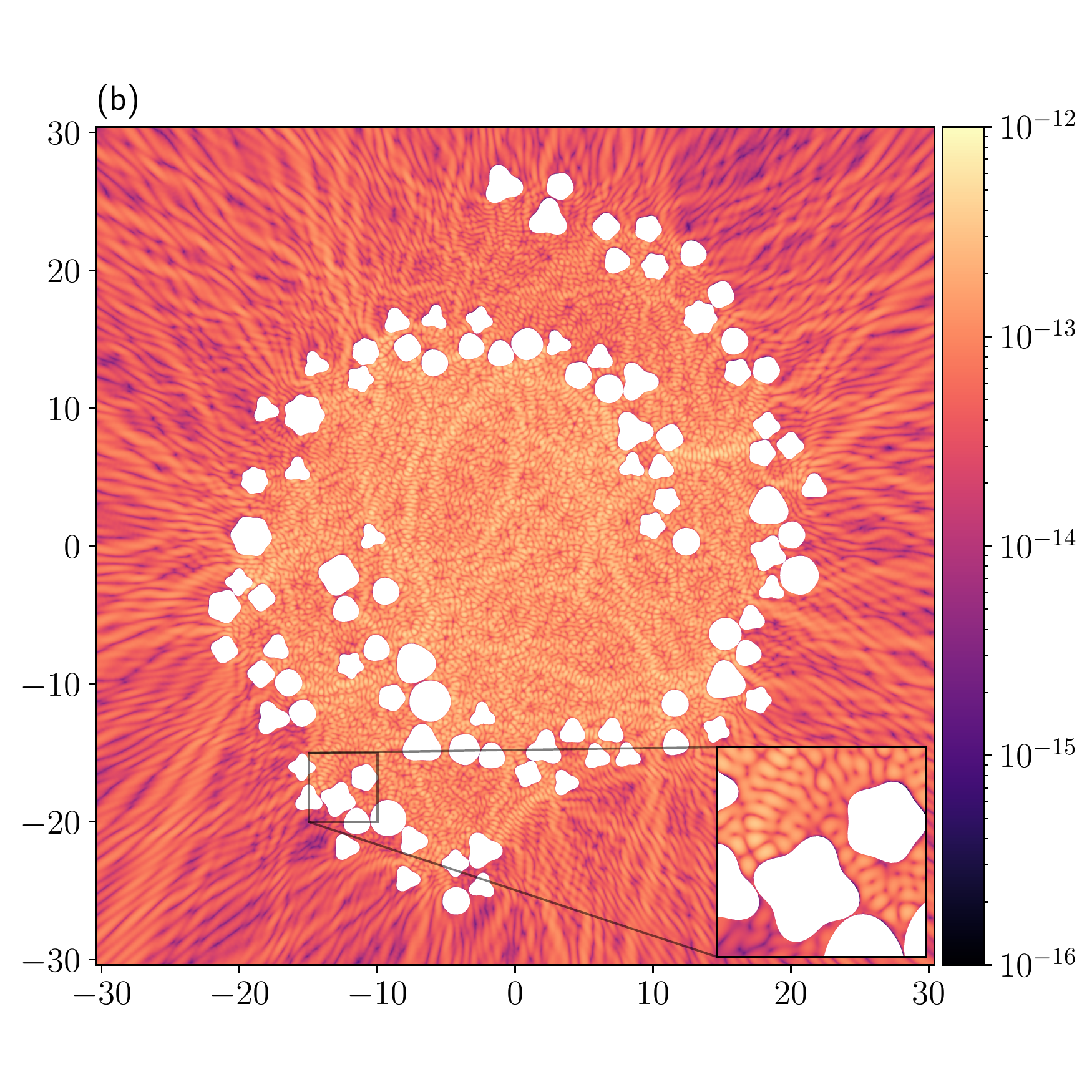}
  \end{subfigure}
  \caption{Exterior Dirichlet Helmholtz scattering from 100 inclusions.
    (a) shows the physical solution $u_\tbox{inc}+u$, and (b) the
    absolute difference between the two finest discretizations.}
  \label{figure:helmholtz_medium:solution}
\end{figure}

The absolute difference between the solution computed by the two finest discretizations is shown in \Cref{figure:helmholtz_medium:solution}(b). Its maximum is $6.70\times 10^{-13}$, less than the GMRES tolerance.
A more detailed view of convergence is shown in \Cref{figure:helmholtz_medium:refinement}(a), which also compares the QFS solution
to that with Kress with upsampling.
The black QFS ``self-convergence'' curve shows the maximum ($L^\infty$)
difference over a $2000\times2000$ grid
between solutions computed at successive levels of refinement.
Clearly, the convergence appears to be spectral, down to 12-digit accuracy.
A much higher spectral rate is observed for the self-convergence
at the (distant) target $(0,0)$, both for QFS and for Kress.
Their rates are indistinguishable,
and furthermore converge to the same answer to 12 digits (red curve).
(We note that Kress, even with upsampling, is unable to accurately
evaluate on all of the grid points used for $L^\infty$-norm testing of QFS.)

\begin{figure}  
  \centering
  \begin{subfigure}[c]{0.4\textwidth}
    \centering
    \includegraphics[width=\textwidth]{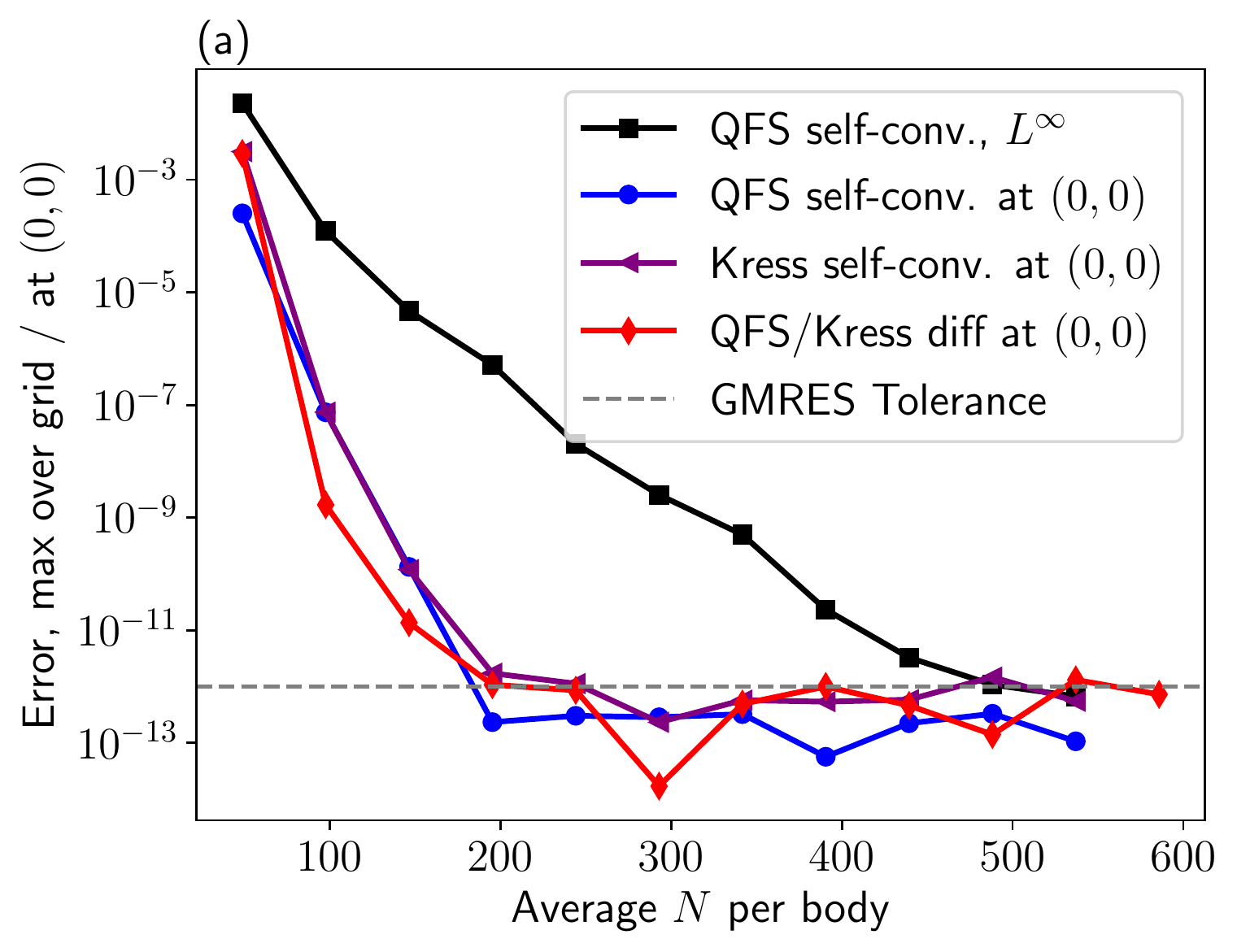}
  \end{subfigure}
  \begin{subfigure}[c]{0.29\textwidth}
    \centering
    \includegraphics[width=\textwidth]{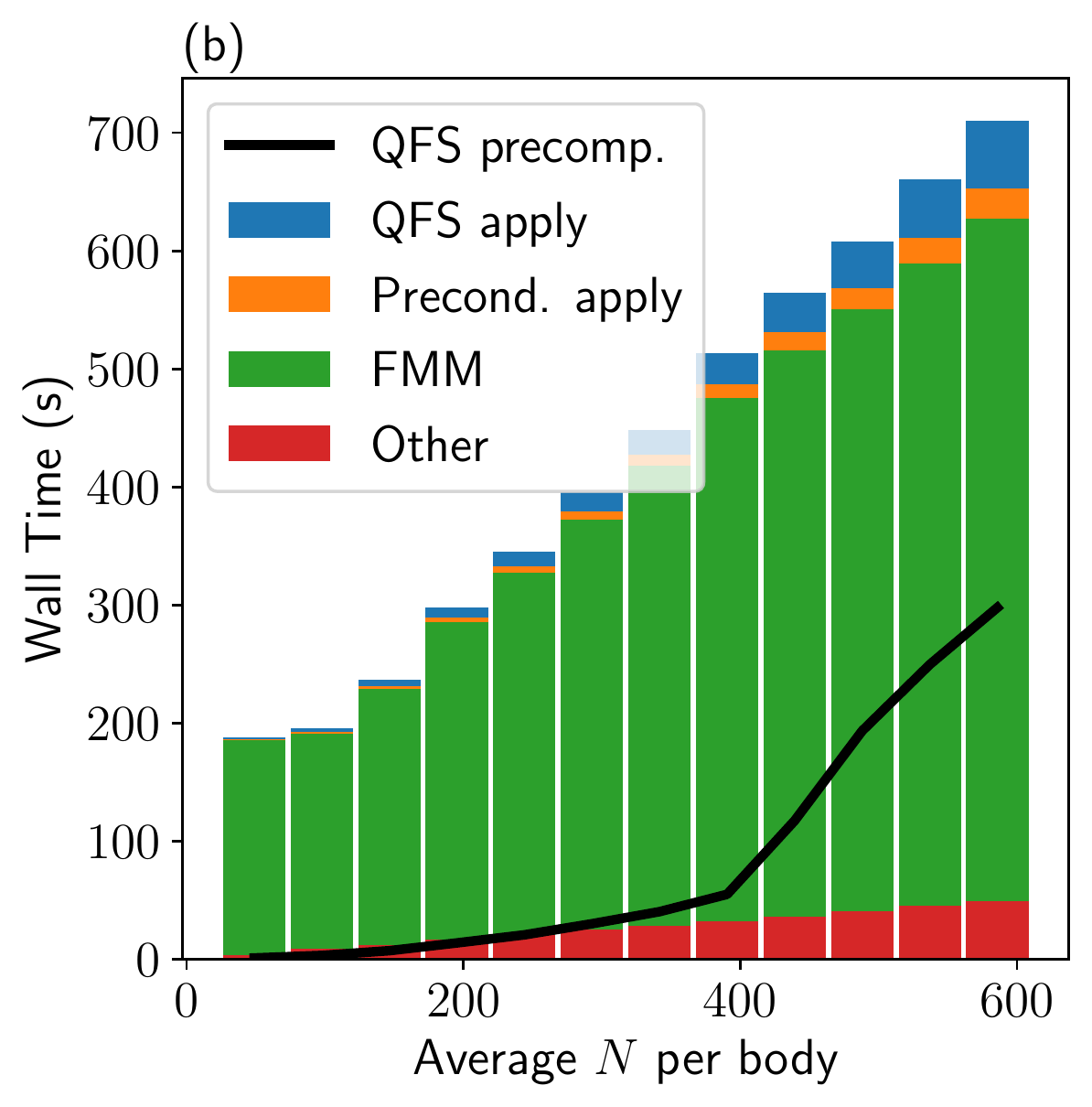}
  \end{subfigure}
  \begin{subfigure}[c]{0.29\textwidth}
    \centering
    \includegraphics[width=\textwidth]{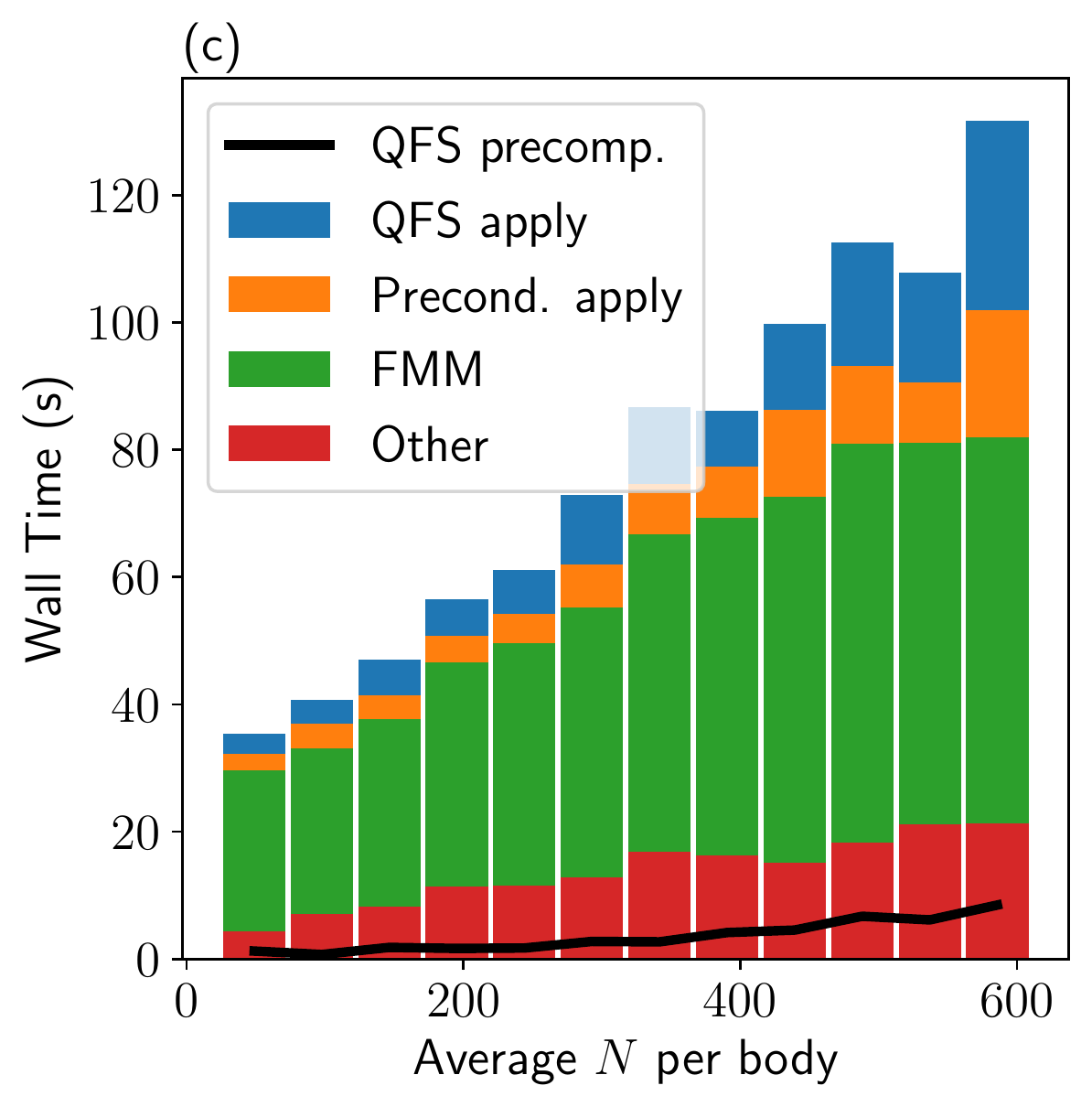}
  \end{subfigure}
  \caption{ Dirichlet Helmholtz scattering problem with 100 inclusions.
  (a) shows convergence of the proposed QFS scheme at a similar rate to that of gold-standard Kress quadratures plus expensive upsampling.
  CPU timings are shown in (b) and (c) on the Macbook and AMD Node, respectively (see Section~\ref{section:large_scale:helmholtz:medium}).}
  \label{figure:helmholtz_medium:refinement}
\end{figure}

Finally, in \Cref{figure:helmholtz_medium:refinement}(b) and (c), we show wall-clock times for these simulations on the Macbook and AMD Node, respectively.
The black line shows the QFS precomputation time, which includes the time required to form and invert the Nystrom matrices $A^{(i,i)}$.
Bars show the total time (accumulated over all GMRES iterations) for {\tt QFSDapply} routine (blue), the block-diagonal preconditioner (orange), and the point FMM (green).
The red block shows all time not accounted for by these processes, which is dominated by internals of GMRES. Although the percentage of the total solve time consumed by {\tt QFSDapply} grows as $N$ does, it never exceeds 8\% or 25\% on the Macbook and the AMD Node, respectively. In fact, for this high-iteration count problem, the time used by orthogonalization within GMRES is similar to that used by QFS for quadrature, especially as the thread-count over which the FMM and the {\tt QFSDapply} routines can be split increases. A tabulation of results is presented in \Cref{table:helmholtz:medium}. For all discretizations for both methods, the iteration count for GMRES to converge to $10^{-12}$ is exactly 854.

\begin{rmk}[Optimizations for repeated objects]
  The timing results presented here are actually a worst-case scenario: all of the scattering surfaces are unique so need their own QFS precomputation. If objects repeat, two optimizations appear: (1) the {\tt QFSDprecompute} stage needs to be done only once for each unique object, and (2), because the matrices appearing in the {\tt QFSDapply} algorithm are the same, all steps in \Cref{g:qfs-d} can be packed together into highly-optimized BLAS3/LAPACK
  calls. These optimizations are possible for the QFS-B algorithm, as well.
\end{rmk}

\begin{table}
  \centering
  \begin{tabular}{l|rrrrrr}
    \toprule
    Average $N$ per body    & 49     & 147     & 244     & 342     & 439     & 537     \\
    \midrule
    GMRES Iter., QFS        & 854    & 854     & 854     & 854     & 854     & 854     \\
    GMRES Iter., Kress      & 854    & 854     & 854     & 854     & 854     & 854     \\
    Self-conv., $L^\infty$  & 2.3e-2 & 4.7e-6  & 2.0e-8  & 5.0e-10 & 3.3e-12 & 6.7e-13 \\
    Self-conv., $(0,0)$     & 2.5e-4 & 1.3e-10 & 3.0e-13 & 3.3e-13 & 2.2e-13 & 1.0e-13 \\
    Kress/QFS Diff          & 2.9e-3 & 1.3e-11 & 1.1e-12 & 4.6e-13 & 5.2e-13 & 8.3e-13 \\
    \midrule
    \multicolumn{7}{l}{Timing (in seconds, Macbook)}                                   \\ 
    \midrule
    QFS Precomp.           & 0.9    & 7.1     & 20.5    & 40.1    & 117.2   & 249.5   \\
    QFS Apply              & 1.5    & 5.4     & 12.0    & 21.3    & 33.3    & 48.9    \\
    Precond. apply         & 0.6    & 2.2     & 5.5     & 9.3     & 15.0    & 22.1    \\
    FMM                    & 181.9  & 216.8   & 307.1   & 389.8   & 479.7   & 544.3   \\
    Other                  & 6.7    & 10.1    & 16.1    & 16.7    & 20.3    & 23.5    \\
    Total Solve            & 187.7  & 236.4   & 344.8   & 448.4   & 564.2   & 660.2   \\
    QFS \% of Total        & 0.8    & 2.3     & 3.5     & 4.8     & 5.9     & 7.4     \\
    \bottomrule
  \end{tabular}
  \caption{Tabulation of the results from the moderately size Helmholtz problem from \Cref{section:large_scale:helmholtz:medium}. See discussion and \Cref{figure:helmholtz_medium:solution,figure:helmholtz_medium:refinement} for further analysis.}
  \label{table:helmholtz:medium}
\end{table}

\subsubsection{Large Helmholtz problem}
\label{section:large_scale:helmholtz:large}

We repeat the tests from the previous section, with $K=1000$ obstacles
and spiral parameters $a=4$, $b=7$ and $p=1.4$,
with $s$ in \eqref{eq:spiral_definition}
uniform random in $[\pi, 7\pi/2]$,
and $\c=\r(s)+\cnoise$,
but now $\cnoise$ is
uniform random in $[-2,2]^2$.
We set $\delta=0.05$, two and a half times larger than before.
The GMRES and QFS tolerances are $10^{-10}$.
The wavenumber is $k=1$, giving about 76 wavelengths across;
see \Cref{figure:helmholtz_large:solution}(a).

\begin{figure}   
  \centering
  \begin{subfigure}[c]{0.45\textwidth}
    \centering
    \includegraphics[width=\textwidth]{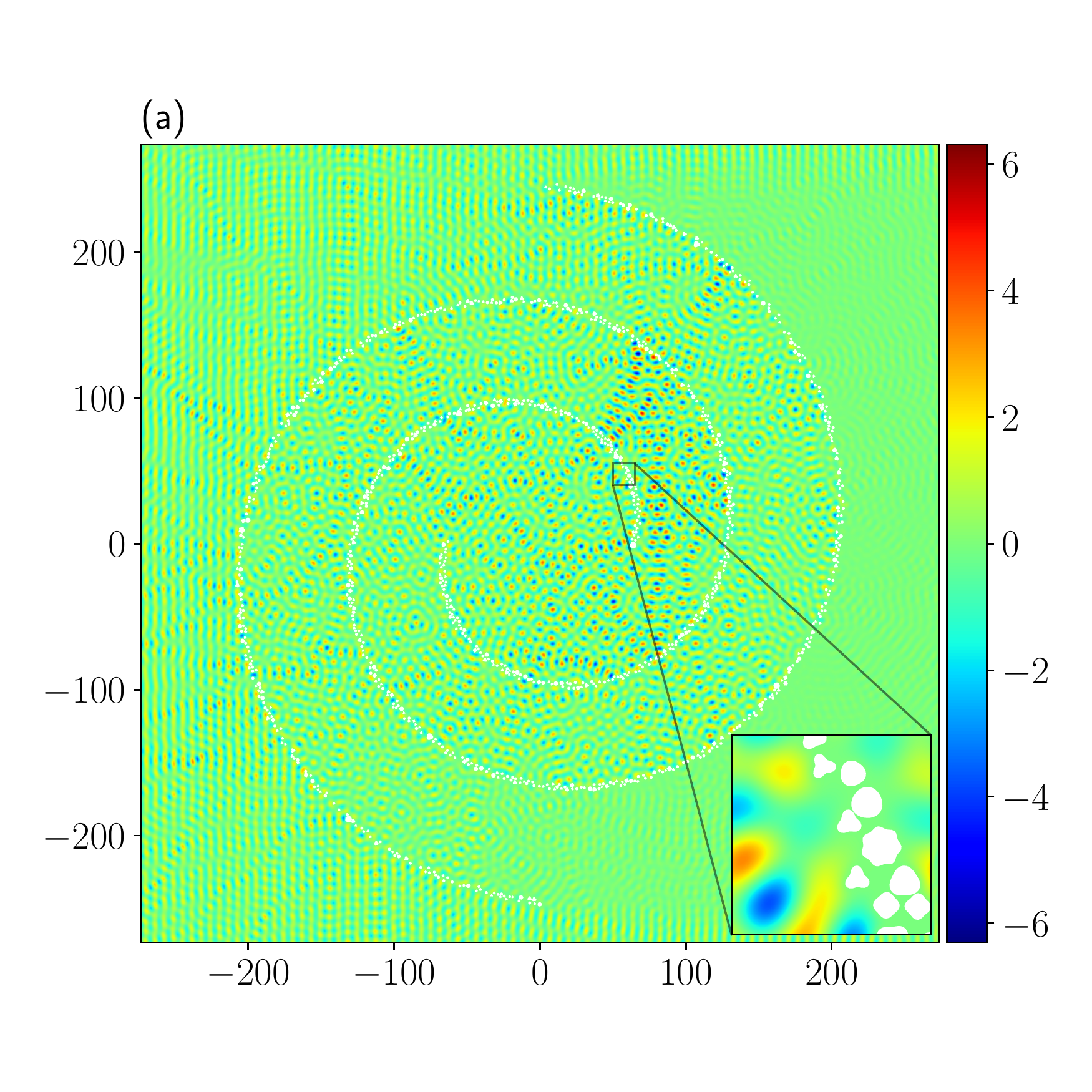}
  \end{subfigure}
  \begin{subfigure}[c]{0.45\textwidth}
    \centering
    \includegraphics[width=\textwidth]{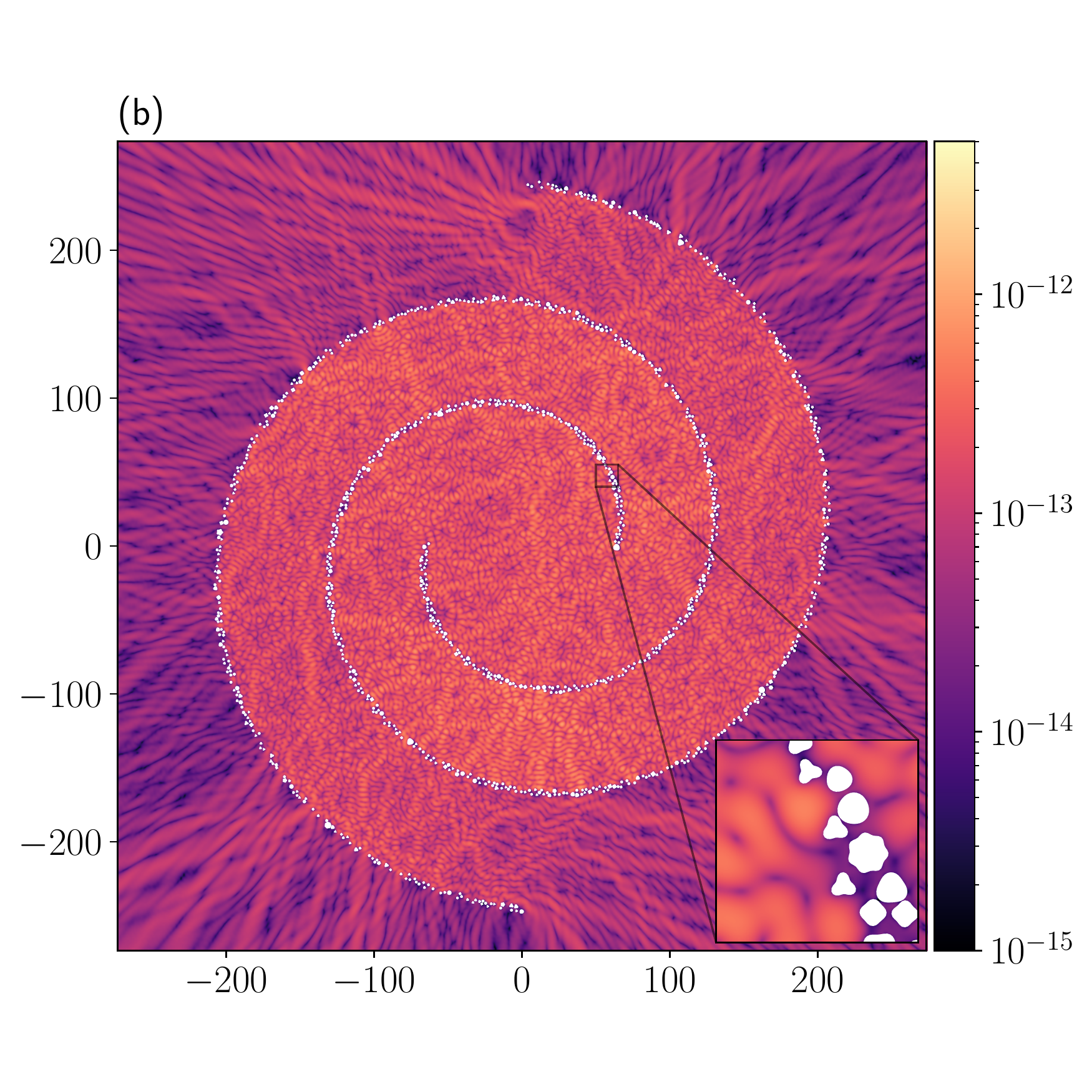}
  \end{subfigure}
  \caption{Exterior Dirichlet Helmholtz scattering from 1000 inclusions.
    (a) shows the physical solution $u_\tbox{inc}+u$, and (b) the
    absolute difference between the two finest discretizations.}
  \label{figure:helmholtz_large:solution}
\end{figure}

As in \Cref{section:large_scale:helmholtz:medium}, we measure self-convergence in $L^\infty$ and at the far field point $(0,0)$, but do not compare to a Kress-based solver due to computational cost.
A self-convergence study is shown in \Cref{figure:helmholtz_large:refinement}(a), consistent with a spectral rate, with convergence stagnating below the GMRES tolerance for both near and far targets.
Timings on the AMD Node are shown in \Cref{figure:helmholtz_large:refinement}(b).
As before, the {\tt QFSDapply} stage consumes a small portion of the total solve time ($<22\%$).
A tabulation of all results is presented in \Cref{table:helmholtz:large}.

\begin{figure}
  \centering
  \begin{subfigure}[c]{0.45\textwidth}
    \centering
    \includegraphics[width=\textwidth]{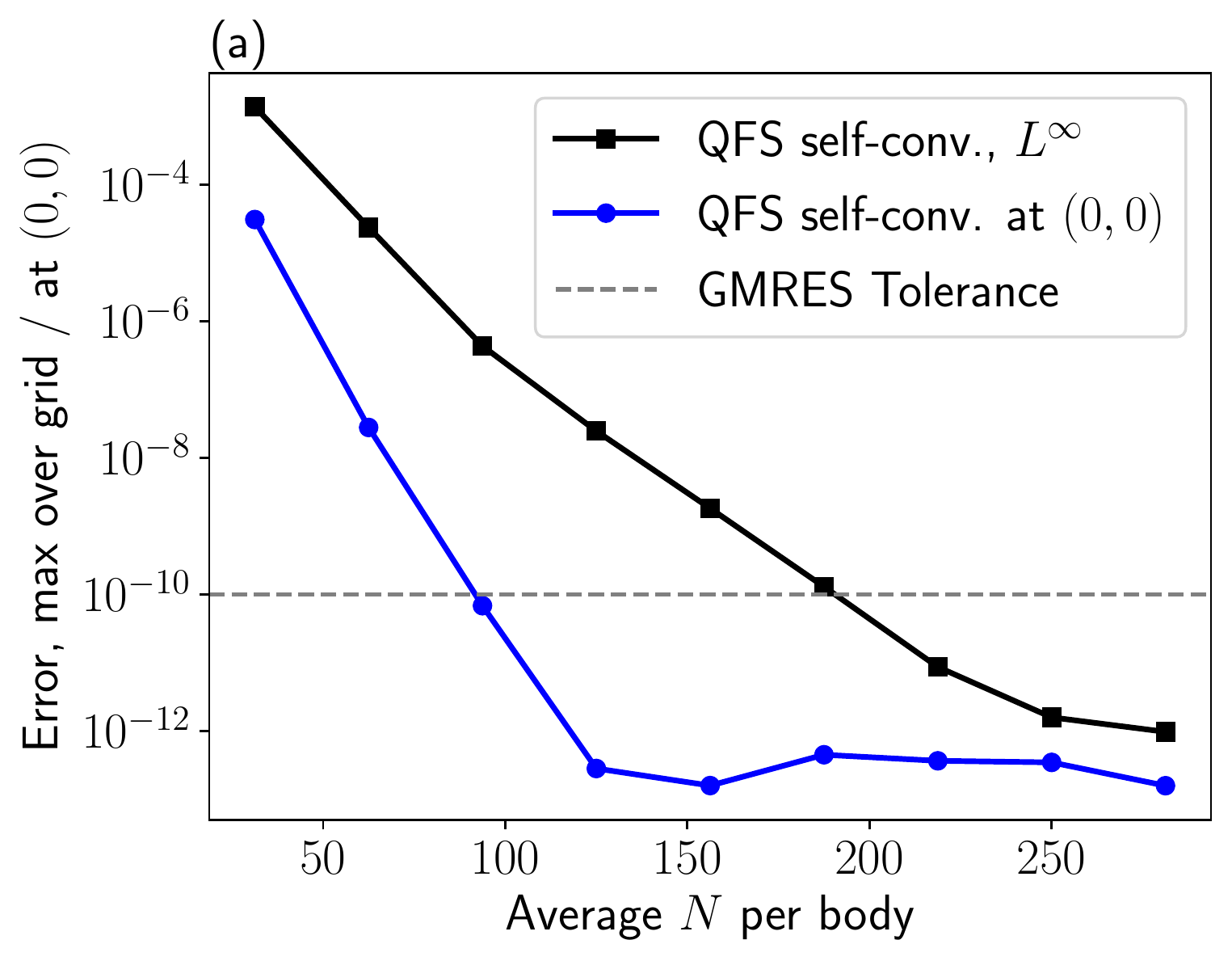}
  \end{subfigure}
  \begin{subfigure}[c]{0.45\textwidth}
    \centering
    \includegraphics[width=\textwidth]{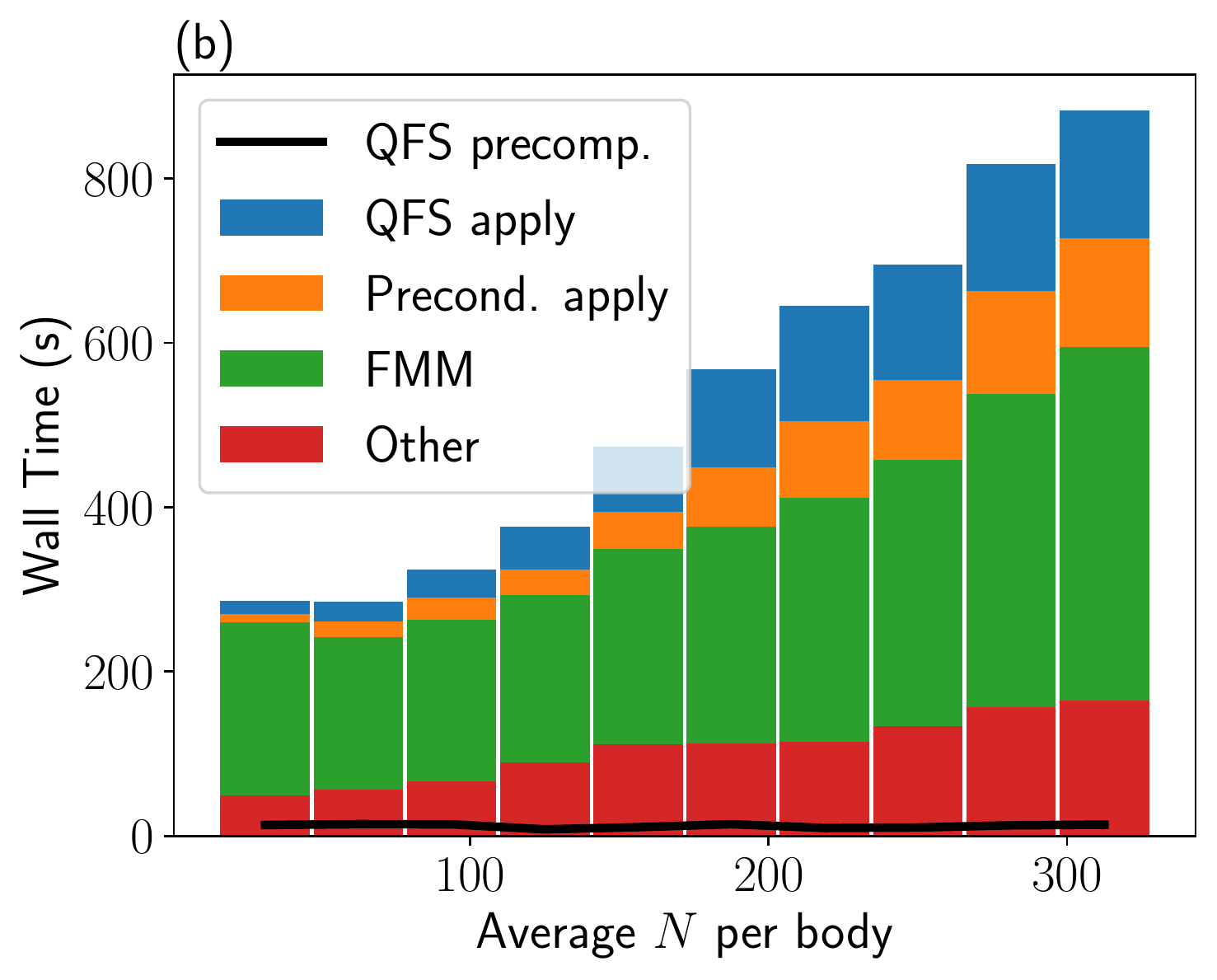}
  \end{subfigure}
  \caption{Exterior Dirichlet Helmholtz scattering problem with 1000 inclusions. Panel (a) shows self-convergence results, both maximum over the grid (black) and at $(0,0)$ (blue). Panel (b) shows computational timings.}
  \label{figure:helmholtz_large:refinement}
\end{figure}

\begin{table}
  \centering
  \begin{tabular}{l|rrrrr}
    \toprule
    Average $N$ per body   & 31     & 94      & 156     & 219     & 281     \\
    \midrule
    GMRES Iterations       & 1331   & 1331    & 1331    & 1331    & 1331    \\
    Self-conv., $L^\infty$ & 1.4e-3 & 4.3e-7  & 1.8e-9  & 8.6e-12 & 9.7e-13 \\
    Self-conv., $(0,0)$    & 3.1e-5 & 6.8e-11 & 1.6e-13 & 3.6e-13 & 1.6e-13 \\
    \midrule
    \multicolumn{6}{l}{Timing (in seconds, AMD Node) }        \\ 
    \midrule
    QFS Precomp.           & 13.2   & 14.1    & 10.6    & 9.6     & 12.9    \\
    QFS Apply              & 16.0   & 33.8    & 79.3    & 140.1   & 155.3   \\
    Precond. apply         & 9.5    & 27.5    & 45.7    & 93.2    & 125.2   \\
    FMM                    & 210.1  & 196.7   & 237.6   & 296.5   & 381.6   \\
    Other                  & 49.5   & 66.4    & 111.4   & 114.9   & 156.2   \\
    Total Solve            & 285.8  & 324.4   & 474.0   & 645.3   & 818.2   \\
    QFS \% of Total        & 5.6    & 10.4    & 16.7    & 21.8    & 18.9    \\
    \bottomrule
  \end{tabular}
  \caption{Tabulation of the results from the large Helmholtz problem from \Cref{section:large_scale:helmholtz:large}.}
  \label{table:helmholtz:large}
\end{table}

\subsection{Driven Stokes flow in 2D}
\label{section:large_scale:stokes}

In this section we consider pressure-driven Stokes flow past a fixed array of
obstacles. The centers for the individual obstacles are drawn by rejection sampling $\mathbf{c}$ uniformly in $[-15,15]^2$, discarding those with $|\mathbf{c}|>14-\ddist$, with $\ddist=0.05$, and geometry generation otherwise proceeds as described in \Cref{a:geometry}, with the additional stipulation that no part of any obstacle curve can lie within $\ddist$ of an outer confining ring of radius $15$. The flow is forced by fixing constant $\mathbf{u}=(1,0)$
velocity (Dirichlet) data
on the outer boundary; a no slip condition ($\mathbf{u}=\boldsymbol{0}$) is enforced at all inner obstacles.
This differs from \eqref{Spde1}--\eqref{Sbc} only in that the domain
is bounded, removing the decay condition.
The solution $(\u,p)$ is then unique up to a constant in $p$.
\Cref{figure:stokes:solution} shows the geometry.

We use the completed formulation ${\cal D} + {\cal S}$
on each body, which is proven to remove their nullspaces \cite{hsiao85}
\cite[p.50]{manasthesis}.
On the outer circle, denoted by $\pO_0$, we use a plain
DLP, but perturb the self-interaction of that block
with a rank-1 operator $\n\n^\intercal$, where $\n$ in the unit normal.
This removes the 1D nullspace associated with the interior BVP
\cite[Table~2.3.4]{HW} \cite{biros04}.
With $j=1,\dots,K$ indexing the interior obstacles,
the BIE takes the $(K+1)\times (K+1)$ block form
\be
\begin{bmatrix}
    -\half+D_{\pO_0,\pO_0} + \n\n^\intercal & D_{\pO_0,\pO_1}+S_{\pO_0,\pO_1}
    & \dots\\
D_{\pO_1,\pO_0} & \half +D_{\pO_1,\pO_1} + S_{\pO_1,\pO_1} & \dots\\
    \vdots & \vdots & \ddots
  \end{bmatrix}
  \begin{bmatrix}
    \btau^{(0)} \\
    \btau^{(1)} \\
    \vdots
  \end{bmatrix}
  =
  \begin{bmatrix}
    \mbf{f}^{(0)} \\
    \mbf{f}^{(1)} \\
    \vdots
  \end{bmatrix}~.
  \label{stokesblkie}
\ee
Each boundary is discretized with the PTR as for Helmholtz in \Cref{section:large_scale:helmholtz}, giving $N=2\left(N_\text{circ}+\sum_{i=1}^K N_i\right)$ total unknowns.
The discretization of $\n\n^\intercal$ is as in Remark~\ref{rmk:pressure_null}.
The QFS mixture becomes $(\ta,\tb)=(1,1)$, proven
to be robust for Stokes in Theorem~\ref{t:qfssto}.
Other than this, the use of QFS-D is almost identical to the Helmholtz case,
a key advantage of the scheme.

A couple of remarks are in order, mostly relating to the new enclosing boundary and pressure evaluation.

\begin{rmk}[Circular boundaries]
  In the special case of a circular boundary, the ``check from source'', ``check from boundary'', and ``boundary from source'' matrices $E$, $C$, and $B$ are
  circulant (immediately for scalar PDEs, and so long as expressed in
  a polar
  coordinate system for vector PDEs). Thus for $N_\text{circ}$ large,
  the circulant property may be exploited via FFTs to stably apply and invert
  $D_{\pO_0,\pO_0}$.
\end{rmk}

\begin{rmk}[Rank-deficiency removal for interior Stokes QFS]
  We use QFS on the confining boundary $\pO_0$
  by negating $\delta$ and $\delta_c$ for interior evaluation.
  The ``check from source'' matrix $E$ now must have a 1D
  null-space associated with the pressure constant ambiguity.
  Although this can be dealt with by regularizing the SVD
  \eqref{svdE},
  we prefer to use a low-rank perturbation to remove rank deficiency.
  We replace $E$ by
  \begin{equation}
    E_\text{augmented}=E + \n_\text{check} (w_\text{source}\n_\text{source})^\intercal,
  \end{equation}
  where $w_\text{source}$ elementwise-multiplies by arc-length quadrature weights for the source curve $\gamma$, and $\n$ indicates a column-vector of normals at
  source or check points.
  \label{rmk:pressure_null}
\end{rmk}

\begin{rmk}[QFS for pressure evaluation]
  The kernels \eqref{SG}--\eqref{SD} are for velocity $\u$ evaluation.
  With $\btau$ solved for in \eqref{stokesblkie}, the pressure solution $p$ may also
  be evaluated via the associated 2D pressure kernels
  \cite{Ladyzhenskaya} \cite[Sec.~2.3]{HW}
  \be
  G_p(\x,\y) = \frac{1}{2\pi}\frac{\r}{r^2},
  \quad
  D_p(\x,\y) = \frac{\mu}{\pi}\biggl(-\frac{\n_\y}{r^2} + 2(\r\cdot \n_\y) \frac{\r}{r^4}\biggr),
  \quad \r := \x-\y, \quad r:=\|\r\|
  \label{SP}
  \ee
  with the QFS source strengths $\bsig$, analogous to \eqref{qfsu}.
  Since the interior BVP solution is only defined up to a pressure constant, we are done.

  However, a more general requirement is that QFS evaluate $p$ with the correct
  constant. This is guaranteed in exterior QFS, because both the desired
  representation \eqref{rep} and QFS source representation have pressures
  vanishing as $r\to\infty$, simply because the kernels \eqref{SP} do.
  For interior QFS evaluation the collocation of $\u$ on $\gamma_c$ leaves
  $p$ generally off by a constant.
  To fix this we compute the $p$ difference relative to plain quadrature
  at a far-field point, then add this difference as a multiple
  of $\n_\text{source}$ to the QFS source strengths. This corrects $p$ but leaves $\u$ unaffected.
\end{rmk}

The density $\btau$ is found by solving \Cref{stokesblkie} using one-body right-preconditioned GMRES to a tolerance of $10^{-9}$, as in \Cref{section:large_scale:helmholtz}.
\Cref{figure:stokes:solution} shows the resulting solution $(\u,p)$ evaluated
by QFS.
Forcing the flow through these tightly packed obstacles requires an enormous pressure gradient, and large flows avoid the narrowest constrictions, instead rushing through the widest contiguous paths. The pointwise difference in $u/\|\mathbf{u}\|_{L^\infty}$ between the two finest resolutions is shown in panel (c); the largest difference is $8.7\times 10^{-11}$.

\begin{figure}
  \centering
  \begin{subfigure}[c]{0.32\textwidth}
    \centering
    \includegraphics[width=\textwidth]{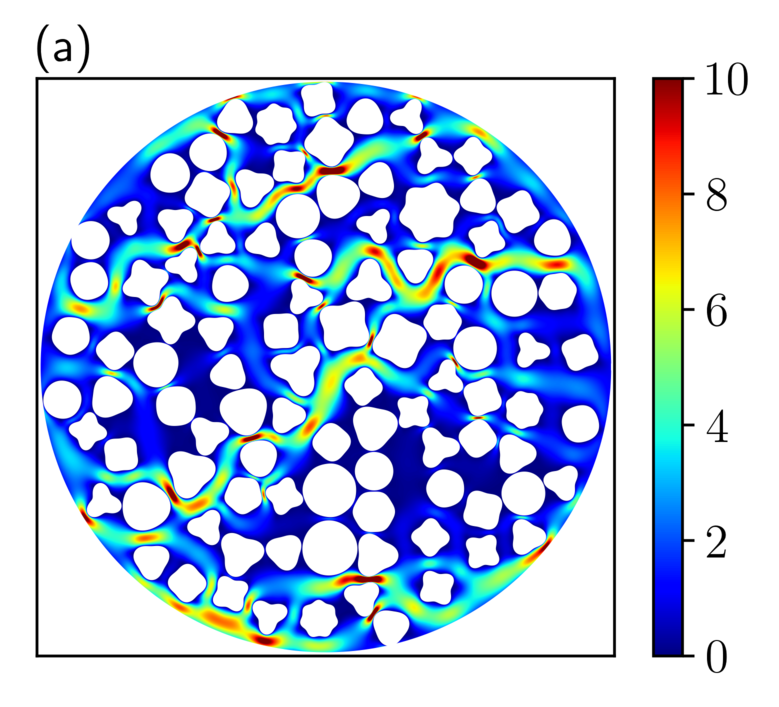}
  \end{subfigure}
  \begin{subfigure}[c]{0.32\textwidth}
    \centering
    \includegraphics[width=\textwidth]{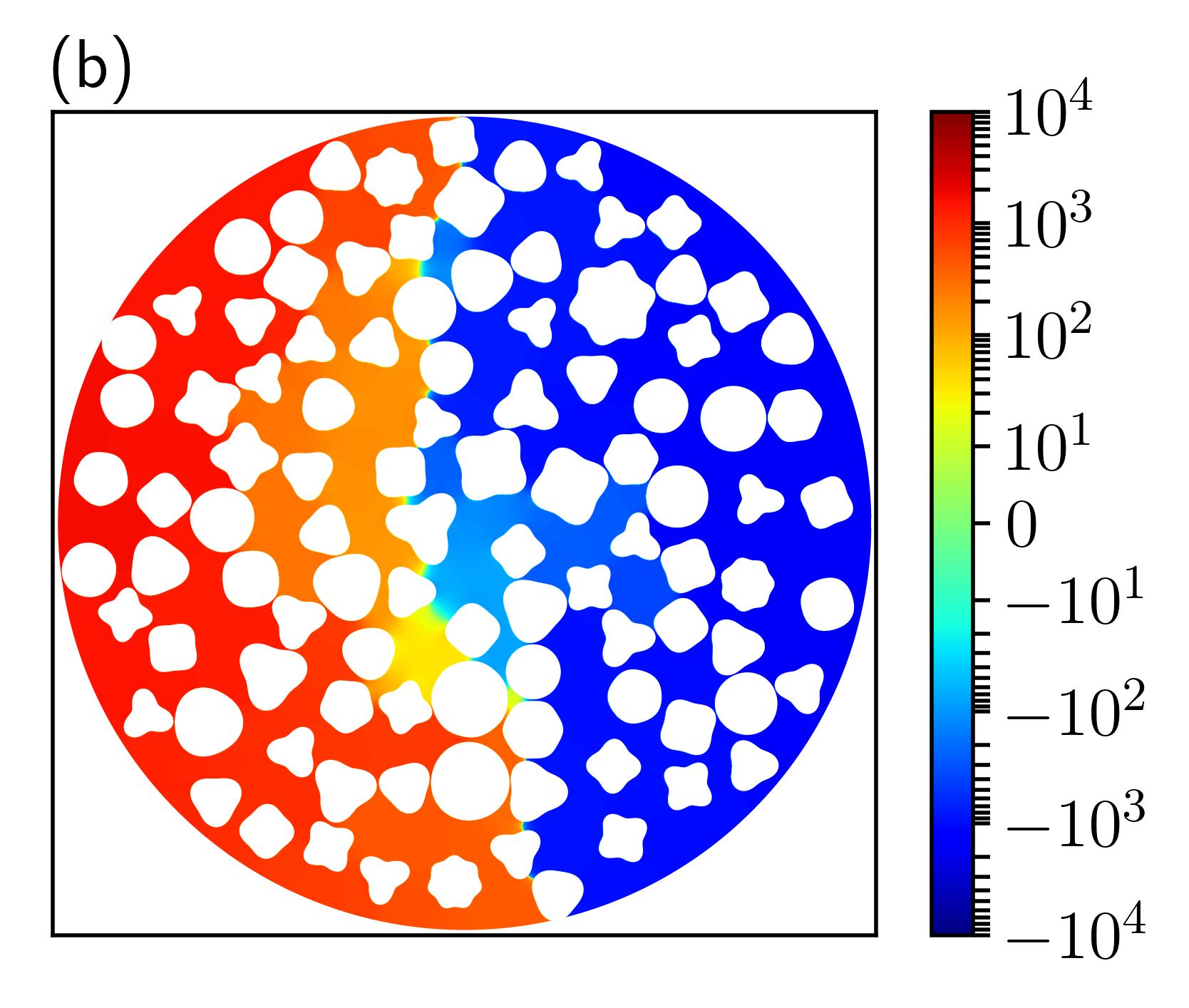}
  \end{subfigure}
  \begin{subfigure}[c]{0.32\textwidth}
    \centering
    \includegraphics[width=\textwidth]{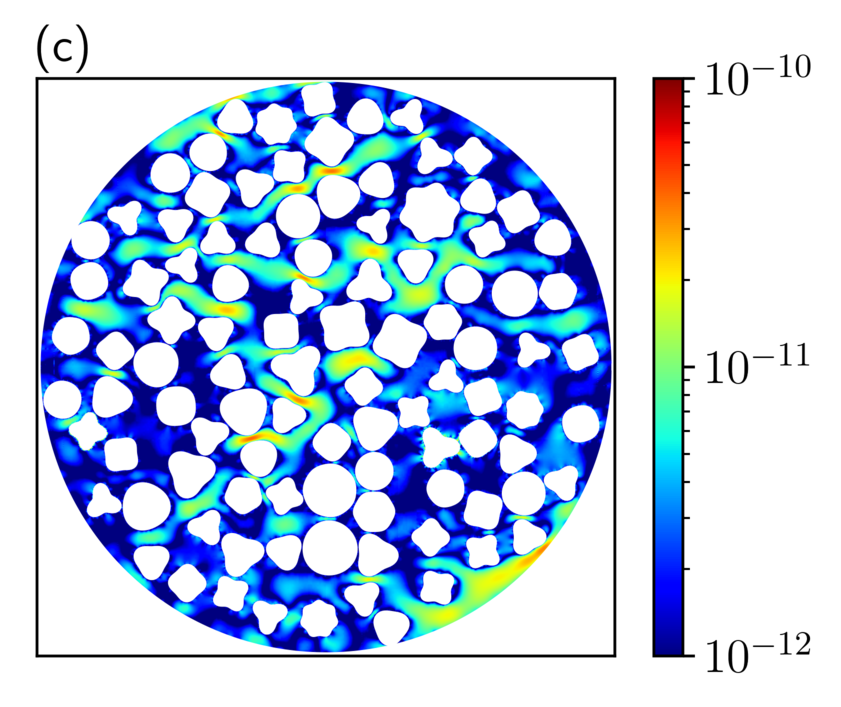}
  \end{subfigure}
  \caption{Driven Stokes flow around 100 no-slip inclusions. Panels (a) and (b) show the speed $|\u|$ and pressure $p$, respectively, computed at the finest discretization. Panel (c) shows the estimated error in $u$, normalized by $\|\mathbf{u}\|_{L^\infty}$.}
  \label{figure:stokes:solution}
\end{figure}

The convergence with respect to the mean value of $N_i$ appears spectral
in \Cref{figure:stokes:refinement}(a),
with a faster far-field rate than in the near-field.
We compare QFS again to the Kress quadratures (which apply to the log-singular
$S$ operator): for both $\u$ and $p$,
they converge rapidly to one another at a far-field point.

\Cref{figure:stokes:refinement}(b) shows timings on the Workstation, using all 12 cores. Unlike for our Helmholtz implementation, the {\tt QFSDapply} routine often uses a large percentage of the compute time. The Stokes code used here is based on an older
Python implementation that uses an unthreaded loop over the bodies, relying on BLAS for parallelization, which gives poor scaling when solving many small-sized problems. In contrast, we use a wrapper to a Fortran biharmonic 2D FMM
\cite{manasthesis} that
scales well across processors, leaving the {\tt QFSDapply} stage to dominate the computation.
Note that since $\upsilon=1.3$, the FMM involves about $1.3 N$ sources
and $N$ targets.
Panel (c) shows timings on the same computer, forced to run in serial; now the {\tt QFSDapply} stage takes no more than $23\%$ of the total time for any discretization. These serial results are collected in \Cref{table:stokes}. For very sparse discretizations, QFS uses slightly more iterations than Kress (592 vs. 571), but the differences disappear upon refinement.

\begin{figure}
  \centering
  \begin{subfigure}[c]{0.4\textwidth}
    \centering
    \includegraphics[width=\textwidth]{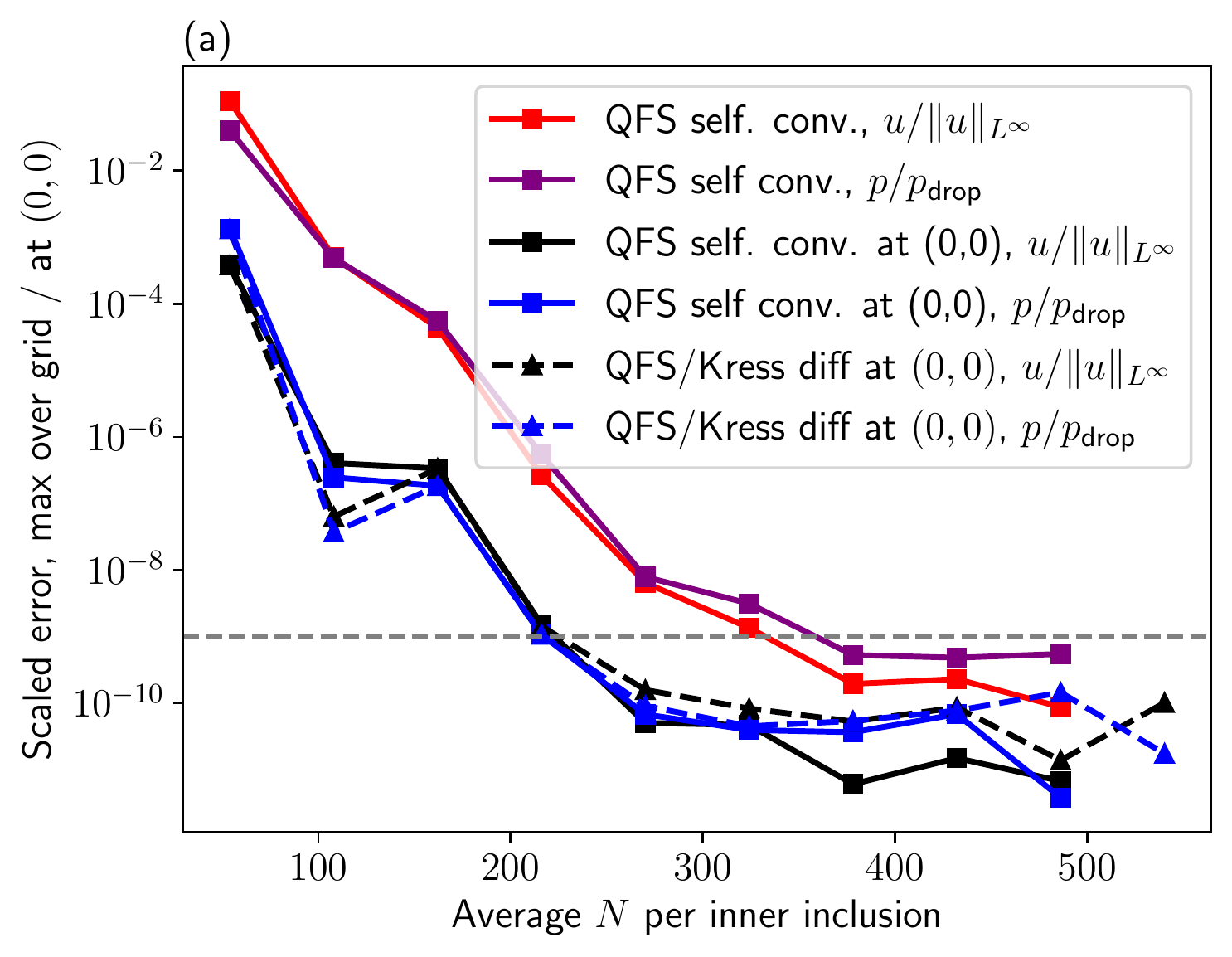}
    \caption{\centering}
  \end{subfigure}
  \begin{subfigure}[c]{0.29\textwidth}
    \centering
    \includegraphics[width=\textwidth]{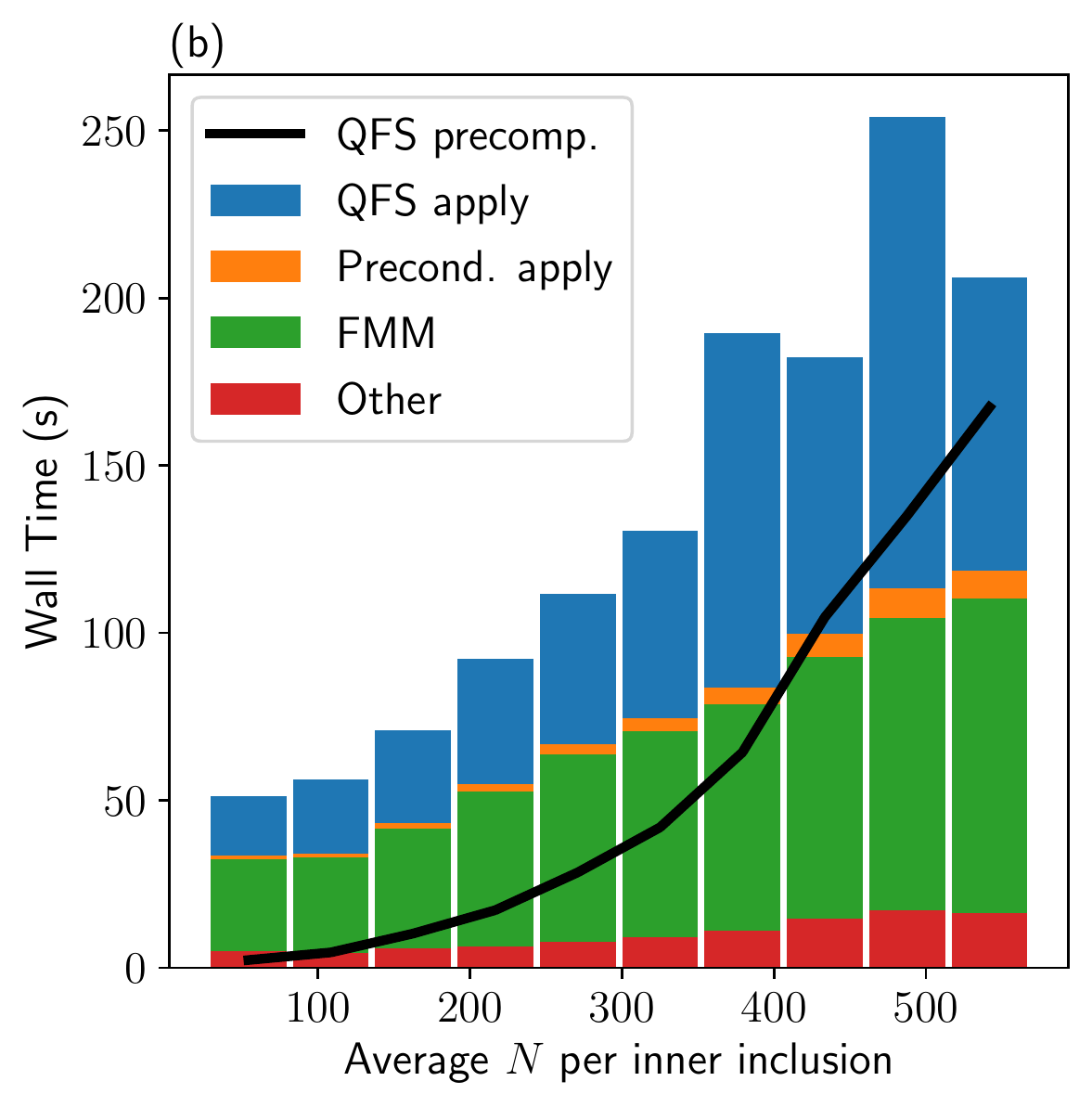}
    \caption{\centering}
  \end{subfigure}
  \begin{subfigure}[c]{0.29\textwidth}
    \centering
    \includegraphics[width=\textwidth]{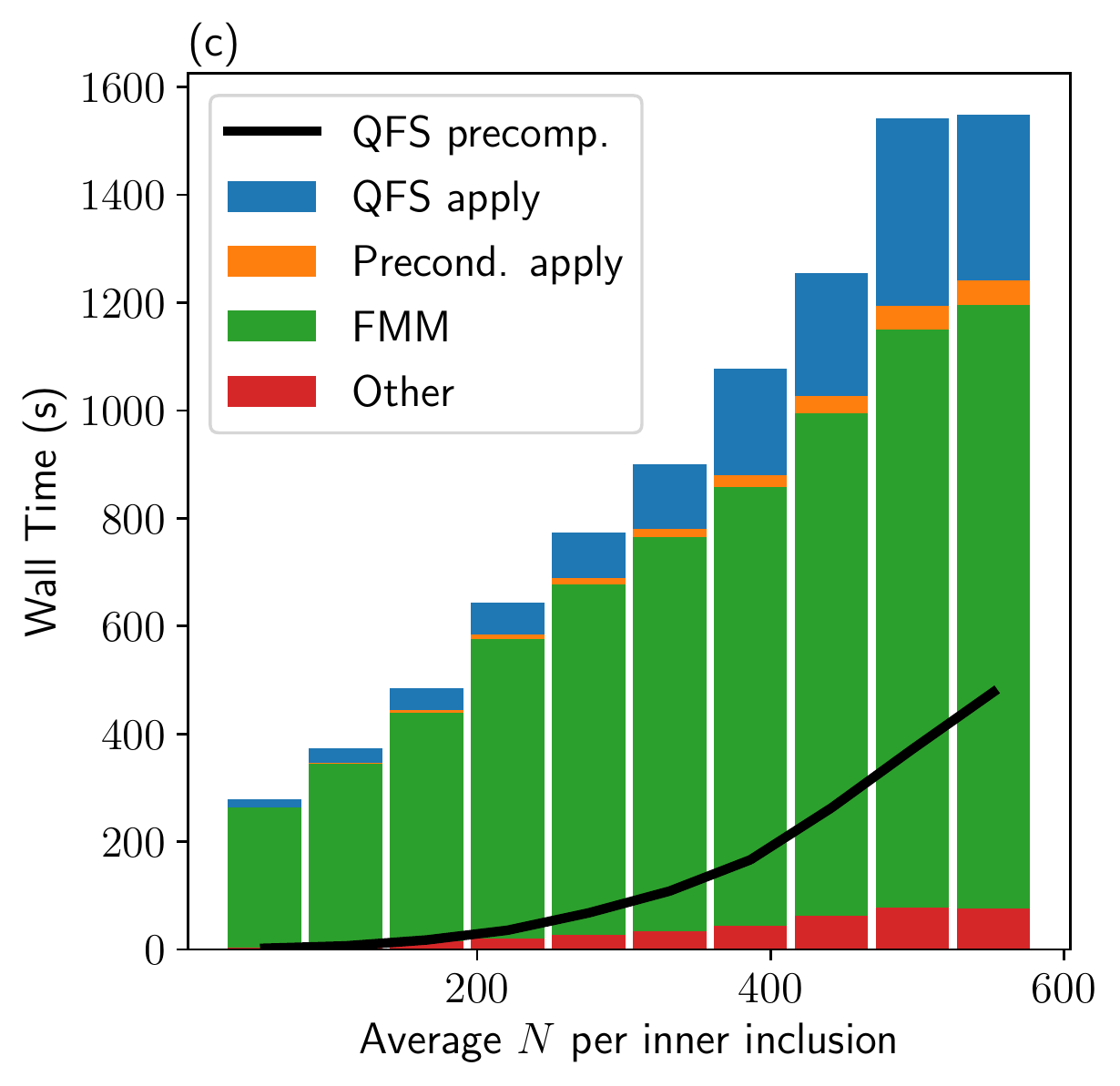}
    \caption{\centering}
  \end{subfigure}
  \caption{Driven 2D Stokes flow past 100 inclusions. Panel (a) shows the self-convergence of QFS (measured both over a grid and at only a far-field point) and the difference between the QFS solution and gold-standard Kress solution at a far-field point. Errors in $u$ are normalized by $\|\mathbf{u}\|_{L^\infty}$; errors in $p$ are normalized by $p_\text{drop}$, the maximal pressure drop over the domain. Panels (b) and (c) show computational timings on the Workstation; using all cores (b) or in serial (c).}
  \label{figure:stokes:refinement}
\end{figure}

\begin{table}
  \centering
  \begin{tabular}{l|rrrrr}
    \toprule
    Average $N$ per body      & 54     & 162     & 270    & 378    & 486     \\
    \midrule
    GMRES Iterations, QFS    & 592    & 571     & 570    & 573     & 570     \\
    GMRES Iterations, Kress  & 571    & 578     & 570    & 570     & 571     \\
    \midrule
    \multicolumn{6}{l}{Convergence for $u/\|u\|_{L^\infty}$}                 \\
    \midrule
    Self-conv., $L^\infty$   & 1.1e-1 & 4.4e-5 & 6.4e-9  & 2.0e-10 & 8.7e-11 \\
    Self-conv., $(0,0)$      & 3.8e-4 & 3.4e-7 & 5.0e-11 & 6.1e-12 & 6.8e-12 \\
    QFS/Kress diff, $(0,0)$  & 3.8e-4 & 3.4e-7 & 1.6e-10 & 5.3e-11 & 1.4e-11 \\
    \midrule
    \multicolumn{6}{l}{Convergence for $p/p_\text{drop}$}                    \\
    \midrule
    Self-conv., $L^\infty$   & 4.9e-2 & 554e-5 & 7.9e-9  & 5.3e-10 & 5.5e-10 \\
    Self-conv., $(0,0)$      & 1.3e-3 & 1.8e-7 & 6.8e-11 & 3.7e-11 & 3.8e-12 \\
    QFS/Kress diff, $(0,0)$  & 1.3e-4 & 1.8e-7 & 9.1e-11 & 5.4e-11 & 1.4e-10 \\
    \midrule
    \multicolumn{6}{l}{Timing (in seconds, Workstation, serial)}             \\ 
    \midrule
    QFS Precomp.             & 2.4    & 17.7   & 67.9    & 166.5   & 371.6   \\
    QFS Apply                & 14.9   & 40.1   & 84.2    & 196.3   & 348.7   \\
    Precond. apply           & 0.9    & 5.4    & 11.4    & 22.3    & 43.4    \\
    FMM                      & 259.2  & 424.0  & 651.4   & 814.4   & 1072.3  \\
    Other                    & 4.2    & 14.9   & 26.6    & 44.0    & 78.3    \\
    Total Solve              & 279.2  & 484.4  & 773.6   & 1077.0  & 1542.3  \\
    QFS \% of Total          & 5.3    & 8.3    & 10.9    & 18.2    & 22.6    \\
    \bottomrule
  \end{tabular}
  \caption{Tabulation of the results from the Stokes problem from \Cref{section:large_scale:stokes}. See discussion and \Cref{figure:stokes:solution,figure:stokes:refinement} for further analysis.}
  \label{table:stokes}
\end{table}


\bfi  
\raisebox{.2in}{\ig{height=1.5in}{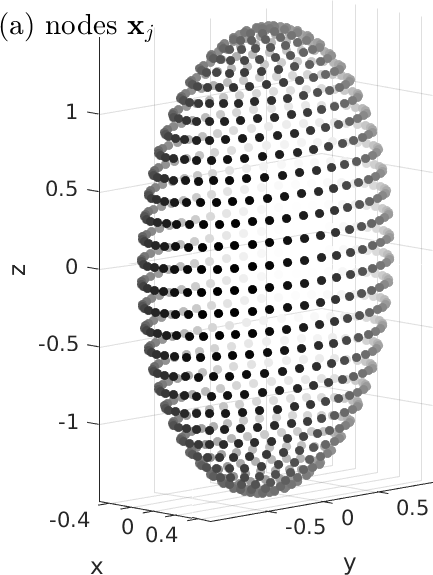}}
\;
\ig{height=1.7in}{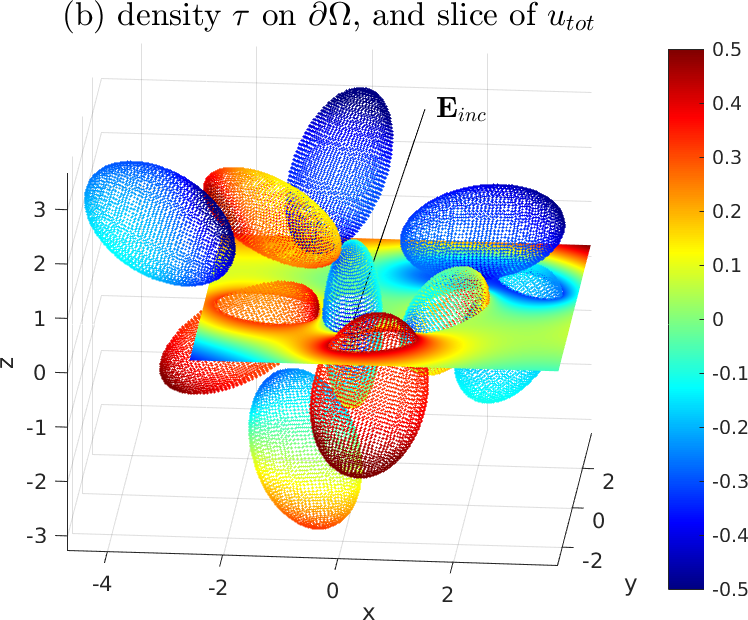}
\;
\ig{height=1.7in}{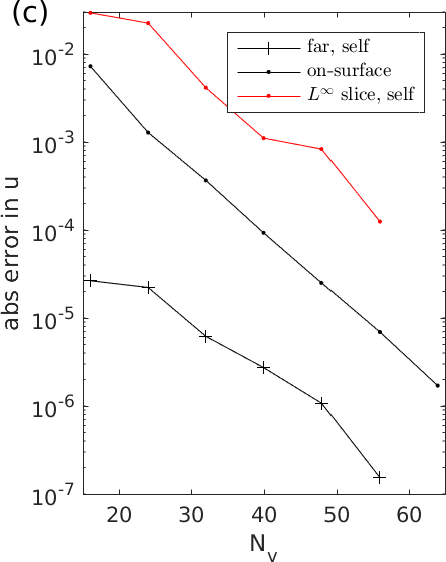}
\ca{3D Exterior Laplace BVP example for $K=10$ triaxial ellipsoids, with
  minimum separations $\ddist=0.1$, solved and evaluated with QFS-D.
  (a) shows the $N_0=1408$ surface nodes for the resolution $N_v=32$.
  (b) shows the physical potential
  $u_\tbox{tot}$ (using the colorscale)
  on a slice $z\approx 0.192$,
  and density $\tau$ (multiplied by 0.3 to fit the same colorscale),
  for $N_v=64$.
  (c) shows convergence of the solution potential $u$ at various points.
  At the highest $N_v=64$, there are $N=36040$ total degrees of freedom.
}{f:3d}
\efi

\section{Laplace 3D implementation and test}
\label{s:3d}

We now describe a preliminary dense (non-accelerated) test of QFS-D
in the exterior of several
identical ellipsoids of semiaxes $(1/2,1,3/2)$, ie, aspect ratio 3.
Since the ellipsoid is triaxial,
quadrature techniques for bodies of revolution
(eg \cite{klintporous,laiaxi}) do not apply.

We ``grew'' a cluster $\Omega$ of $K=10$ such ellipsoids, each a distance $\ddist=0.1$
from at least one other, as follows: for each new body, after choosing a
random orientation in SO(3), we translate it along a line with random orientation pointing towards $\mbf{0}$ until the minimum distance to any other body approximates $\ddist$ to $10^{-6}$.\footnote{The distance between any two ellipsoids is found by alternating
projection, with each projection using a Newton iteration for a Lagrange multiplier.}
We model an electrostatics problem where the $j$th conducting body has constant
voltage $V_j$ (chosen at random in $[-\half,\half]$), plus there is
an applied electric field $\mbf{E}_\tbox{inc}$ (with strength 0.3
in the direction shown in Fig.~\ref{f:3d}(b),
imposing a voltage drop of 2.25 across the cluster).
The physical potential is $u_\tbox{tot} = u_\tbox{inc} + u$,
where $u_\tbox{inc}(\x) = \mbf{E}_\tbox{inc} \cdot \x$,
whereas $u$ solves the $d=3$ Laplace BVP \eqref{Lpde}--\eqref{Sig}
with data on the $j$th boundary
$f_j(\x) = V_j - u_\tbox{inc}(\x)$, $\x\in \pO_j$.

We use the completed representation $u=({\cal D}+{\cal S})\tau$
in $\R^3\backslash\overline{\Omega}$,
leading to the well-conditioned BIE
$$
(\half + D + S)\tau \; = \; f~,
$$
which we discretize via QFS-D as follows.
The ellipsoid (axis-aligned at the origin) is
parameterized
$\r(u,v) = (\sqrt{1-v^2}(\cos u)/2,\sqrt{1-v^2}\sin u,3v/2)$, for
$(u,v)\in[0,2\pi]\times[-1,1]$.
Our spectrally-accurate global surface quadrature
is controlled by $N_v$, the number of Gauss-Legendre nodes $v_j$ covering
$v\in[-1,1]$. On each loop $v=v_j$ we place
an $n_j$-node periodic trapezoid rule in $u$, where
$n_j$ is the smallest even number larger than $\min\bigl[(4N_v/3)(1-v_j^2)^{1/2},8\bigr]$.
This scales $n_j$ by the loop circumference to uniformize the node density.
Weights are the products of the 1D rule weights and the Jacobian
$\|\r_u \times \r_v\|$.
There are about $N_0=0.9 N_v^2$ nodes on the ellipsoid;
see Fig.~\ref{f:3d}(a).

QFS sources are located by simple constant normal displacement
without upsampling: $\y_j = \x_j- \delta\n_j$,
where $\delta=0.08$ (somewhat below the smallest radius of curvature)
was chosen by experiment.
Similarly, the exterior check points are $\z_j = \x_j+\delta_c\n_j$ with
$\delta_c = 0.01$. We chose an upsampling factor $\rho=3$,
sufficient for around 9 digits of accuracy at check points.

The only missing ingredient is a spectral upsampling matrix $L_{\tilde{N}\times N}$ that maps values at nodes from a $N$-node rule controlled by $N_v$ to an $\tilde N$-node rule controlled by $\tilde{N}_v = \rho N_v$.
In brief this applies length-$n_j$ 1D discrete Fourier transforms (DFTs)
on the $j$th loop, zero-pads all modes up to the frequency $\max_j n_j/2$,
applies barycentric Lagrange interpolation onto new nodes in the
$v$ direction (separately for each Fourier mode),
then applies zero-padded length-$\tilde n_j$
inverse 1D DFTs to recover values on the $j$th output loop.
In practice this is a chain of matrix-matrix or Kronecker products.
A subtlety is that for odd modes only, the Lagrange interpolation must be
performed on the function divided by $\sqrt{1-v^2}$,
recalling that associated Legendre functions $P^{m}_n(v)$ with odd $m$
are polynomials in $v$ multiplied by this factor
\cite[Sec.~12.5]{arfken}.
Our code to fill $L_{\tilde{N}\times N}$ is about 30 lines of MATLAB.

We now have all the QFS-D pieces, so
use Algorithm~\ref{g:qfs-d} (LU variant as in Remark~\ref{r:LU})
to precompute $L$, $U$ and $\bar{P}C$,
using the $d=3$ Laplace kernel \eqref{LG} and the pure SLP $(\ta,\tb)=(1,0)$
QFS mixture.
Since we do not seek accuracies near $\emach$,
we simply store $X = U^{-1}(L^{-1}(\bar P C))$ then get the
$N_0\times N_0$ 1-body Nystr\"om matrix $A_0 = BX$.
\begin{rmk}
  The spectrum of $A_0$ may be improved by
  two-sided averaging \cite{qbx,hedgehog}: $A_0$ becomes the average
  of interior and exterior QFS discretizations, with the $I/2$ jump
  term then added explicitly.
  This gives $\kappa(A_0)\in[2.5,2.9]$ for all resolutions tested.
\end{rmk}
The dense $KN_0\times KN_0$ Nystr\"om matrix $A$ is now filled with $A_0$
as diagonal blocks, and offdiagonal blocks $A^{(i,j)} = B^{(i,j)} X$,
where $B^{(i,j)}$ is a matrix evaluating the SLP kernel $G$
from the QFS source locations for body $j$ to the nodes of body $i$.

GMRES with tolerance $10^{-8}$ is used with dense matrix-vector multiplication,
requiring exactly 24 iterations
for all resolutions tested, apart from the smallest $N_v=16$.
Thus we do not use one-body preconditioning.
The density $\tau$ solving the linear system is shown in Fig.~\ref{f:3d}(b).

The convergence of various errors in $u$
with $N_v$ is shown by Fig.~\ref{f:3d}(c), and appears to be spectral.
At a ``far'' target ($\x=(1,-1,2)$, a distance 1.15 from the nearest
body), 7-digit accuracy is reached by $N_v=56$,
or $N_0=2792$ per body, estimated by self-convergence.
At a generic on-surface target on body $j=1$
the value of $u_\tbox{tot}-V_1$ (and hence the error, shown with black dots),
reaches close to 6-digit accuracy at $N_v=64$.
A tougher test is the $L^\infty$ error over a 2D
slice of $54246$ exterior targets
(see Fig.~\ref{f:3d}(b)) with grid spacing 0.025,
and passing through at least one nearest-touching point,
and including a target $2\times 10^{-5}$ from one of the bodies.
Shown by the red curve, this reaches only 4-digit accuracy at $N_v=56$,
although the rate seems the same.
As expected, the worst errors occur at near-touching regions
and are oscillatory at the node scale.

\begin{rmk}
  Out of curiosity we have made this BVP challenging by imposing
  $\bigO(1)$ voltage differences
  between bodies, resulting in large fields $\|\grad u\|\approx10$,
  and density near-singularities at close-touching points.
  This may be analogous to velocity differences that occur in Stokes
  with rigid bodies \cite{mobility,corona3dmob,junwang}.
  With the same $u_\tbox{inc}$, if we set all $V_j=0$, the BVP
  becomes easier, the densities nonsingular, and all
  $L^\infty$ and on-surface errors improve by at least 1 digit.
\end{rmk}

Even though our implementation was naive, using a dense $A$,
timings were reasonable.
We worked in MATLAB
on a laptop with a quad-core Intel i7-7700HQ CPU and 32 GB RAM.
At $N_v=40$ (giving uniform 3-digit accuracy), the entire calculation
is done in 11 seconds.
At the largest $N_v=64$, two-sided QFS-D took 30 s to fill $A_0$
(dominated by wielding $L_{\tilde N\times N}$),
70 s to fill $A$, and 14 s for its GMRES solution.
Storing this $A$ needs 10 GB; obviously
an FMM-accelerated version would not have this limitation.
New densities can be converted into QFS source strengths at a rate
of at least $10^6$ points/s; this would enable a 3D FMM to perform
accurate evaluations close to or on surfaces with little extra cost.




\section{Conclusions}
\label{s:conc}

We have explored in depth, analytically and numerically, a proposal
to use an {\em effective source representation} for
the efficient spectrally-accurate
evaluation of layer potentials living on simple curves and surfaces.
The map to source strengths is precomputed by {\em collocation} either on
the boundary (QFS-B) or on a nearby ``check boundary'' (QFS-D).
The latter needs only a family of {\em smooth} quadratures on $\pO$,
and a high-order upsampling (interpolation) rule between members of the family.
We show that, with 2D periodic trapezoid nodes,
error performance is similar to the best-known
schemes: Kress for on-surface and expensive
adaptive quadrature for off-surface.
We expect it to add to the toolkit for large-scale simulations in
complex media, including viscous flows, wave scattering, electrostatics
(three cases we study here), as well as
sedimentation, vesicle dynamics, Maxwell, elastostatics, and elastodynamics.

The counterintuitive underlying idea---solving an ill-conditioned
1st-kind integral equation
to give a new global evaluator for a well-conditioned 2nd-kind integral
equation---brings several advantages:
distant, near, and on-surface targets all use the {\em same}
accurate representation
(making acceleration almost trivial, given a point-FMM code),
singular quadratures are replaced with
an upsampled smooth rule (as in QBX \cite{qbx,ce} or hedgehog
\cite{hedgehog}),
and the method is kernel-independent, allowing
easy implementations for various scalar and vector PDEs.

Our scheme is efficient when there are many simple bodies.
This hangs on the philosophy that {\em it is worth spending
  a lot of effort to create a good layer-potential representation
  that will be reused a huge number of times
  (GMRES iterations, simulation time-steps, etc).}
Since the idea is essentially a precomputed solution operator for
the method of fundamental solutions (MFS),
it comes with the same caveats
about the shape as the MFS.
While even 2D corners
can be handled by the MFS \cite{hochmancorner,larrythesis,lightning},
in 3D there is probably a limitation to simple smooth bodies.

On the theory side, we proved robustness
(assuming potential value collocation on the check curve),
showing that general conditions (C1-C2) hold for three common PDEs,
in 2D and 3D, for sufficiently analytic data.
Our discrete analysis in 2D invoked MFS
and BIE literature, but aspects such as the ratio condition
\eqref{ratio} seem more difficult to analyze.


\begin{rmk}[Why not just use MFS?]
  Given the success of 1st-kind representations
  ``under the hood'' of QFS, the reader may wonder
  whether one should just instead use (one-body preconditioned) MFS
  to solve the entire multi-body BVP. Such a method has utility
  (eg \cite{acper}).
  However, this would not fit within our goal of providing a
  general black-box layer-potential evaluator tool.
\end{rmk}




Future work suggested by this study includes i) application
to Neumann and other boundary conditions,
ii) FMM-accelerated 3D mobility solvers,
and iii) clustered MFS source locations to handle corner domains
\cite{hochmancorner,larrythesis,lightning}.
There also remain interesting analysis questions such
as understanding upsampling factors for 2D Stokes on the disk.






  \begin{acknowledgements}
    We are grateful for discussions with Manas Rachh, and the use of his
    2D biharmonic FMM code.
We thank Ralf Hiptmair for asking a question
(along the lines of ``why can't interior multipoles be used to precompute a quadrature for a rigid object?'') at an ICOSAHOM 2018 talk that helped inspire this work.
The Flatiron Institute is a division of the Simons Foundation.
\end{acknowledgements}

\appendix
\section{Robustness of continuous QFS representations for analytic data
  in three PDEs}
\label{a:mfs}

Here we prove Theorem~\ref{t:qfslap}, then state and prove
versions for Helmholtz and Stokes, which need adjustments.
We use ideas from Doicu--Eremin--Wriedt \cite[Ch.~IV, Thms.~2.1-2]{doicu}, who considered the only the pure SLP for Helmholtz.
These theorems are thus also useful for any MFS (first-kind IE) method
for exterior BVPs.

We consider $\Omega\subset\R^d$ with smooth boundary $\pO$,
and $\gamma\subset\Omega$ a smooth simple closed source curve (in $d=2$)
or source surface ($d=3$),
enclosing a domain $\Omega_-\subset\Omega$.
We abbreviate $u_\n := \partial u / \partial \n$.

\begin{proof}[Proof of Theorem~\ref{t:qfslap}.]
  Using $u$ to also denote the continuation of the solution,
  one may read off its data on $\gamma$,
  and the exterior Green's
  representation formula \cite[(1.4.5)]{HW} holds,
  \be
  u(\x) = \int_\gamma \biggl[ -G(\x,\y) u_\n(\y) + \frac{\partial G(\x,\y)}{\partial \n_\y} u(\y) \biggr] ds_\y~, \qquad \x \in \R^d\backslash\overline{\Omega_-}~.
  \label{GRFe}
  \ee
  Note that, by the decay condition, no constant term is needed.
  Let $v$ be the unique solution to
  the Laplace BVP interior to $\gamma$ with Dirichlet
  data $v=u$ on $\gamma$, then let $v_\n^-$ be its normal derivative,
  then the exterior extinction GRF holds
  \be
  0 = \int_\gamma \biggl[ G(\x,\y) v_\n^-(\y) - \frac{\partial G(\x,\y)}{\partial \n_\y} v(\y) \biggr] ds_\y~, \qquad \x \in \R^d\backslash\overline{\Omega_-}~.
  \label{GRFn}
  \ee
  Adding the last two equations cancels the DLP terms, leaving \eqref{Lmfsrep}
  with
  $$
  \sigma(\y) := v_\n^-(\y) - u_\n(\y)~,\qquad \y\in\gamma~,
  $$
  a density solving \eqref{Lmfsie}.
  Since all data on $\gamma$ was analytic, $\sigma$ is certainly smooth.
  
  For uniqueness,
  instead let $\sigma$ solve \eqref{Lmfsie} with zero RHS.
  Construct $u$ via \eqref{Lmfsrep} from this $\sigma$, then $u$ vanishes
  on $\pO$ by the uniqueness of the exterior Dirichlet BVP in $d=3$,
  or by Lemma~\ref{l:0const} in $d=2$, $u\equiv0$ in $\R^d\backslash\Omega$.
  By unique continuation from Cauchy data $u\equiv u_n\equiv 0$ on $\pO$,
  $u$ vanishes also in $\R^d\backslash\overline{\Omega_-}$.
  Since the potential is continuous across a single-layer \cite[Thm.~6.14]{LIE},
  and $u$ is harmonic in $\Omega_-$, then $u$ solves the interior
  Dirichlet BVP in $\Omega_-$ with vanishing data. By uniqueness of this
  BVP, $u$ vanishes in $\Omega_-$, thus both limits of $u_\n$ either side
  of $\gamma$ vanish, so by the jump relation \cite[Thm.~6.18]{LIE}, $\sigma\equiv0$.
\end{proof}


We now state and prove variants for the other two PDEs tested in this work.

\begin{thm}[QFS robustness for exterior Helmholtz]
  Let $u$ solve \eqref{Hpde} and \eqref{SRC} for $k>0$, with $u=u_c$ on $\pO$.
  Let $u$ also continue as a Helmholtz
  solution throughout the closed
  annulus (or shell) between $\pO$ and a simple smooth interior surface $\gamma\subset\Omega$. Let $\eta\in\R$, $\eta \neq 0$.
  Let $G$ be the fundamental solution \eqref{HG}.
  Then the first kind combined-field integral equation
  \be
  \int_\gamma \left[ \frac{\partial G(\x,\y)}{\partial \n_\y} - i \eta G(\x,\y) \right] \sigma(\y) ds_\y = u_c(\x), \qquad \x\in\pO
  \label{Hmfsie}
  \ee
  has a unique solution $\sigma\in C^\infty(\gamma)$, and
  \be
  u(\x) = \int_\gamma \left[ \frac{\partial G(\x,\y)}{\partial \n_\y} - i \eta G(\x,\y) \right] \sigma(\y) ds_\y~, \qquad \x\in\Omega\backslash \R^d~.
  \label{Hmfsrep}
  \ee
  \label{t:qfshelm}.
  \end{thm}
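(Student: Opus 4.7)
The plan is to follow the Laplace template of Theorem~\ref{t:qfslap}, with two adaptations dictated by Helmholtz: the auxiliary interior boundary value problem used to build $\sigma$ should be an \emph{impedance} problem rather than a Dirichlet one, chosen so that (i) its traces combine with the exterior GRF to reproduce exactly the combined-field kernel $\partial G/\partial \n_\y - i\eta G$, and (ii) the impedance parameter $\eta \neq 0$ rules out the interior Dirichlet/Neumann eigenvalues of $\Omega_-$ that would otherwise spoil a pure SLP or DLP representation (as flagged in the text following the theorem statement).

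For existence, I will first use the hypothesis that $u$ continues analytically as a Helmholtz solution throughout the closed annulus between $\pO$ and $\gamma$, so that $u$ and $u_\n$ are smooth on $\gamma$, then solve the interior Helmholtz impedance BVP
\[
(\Delta+k^2)v=0 \text{ in } \Omega_-, \qquad v_\n - i\eta v = u_\n - i\eta u \text{ on } \gamma.
\]
Uniqueness of this BVP for real $\eta\neq 0$ follows from applying Green's first identity to $v$ and taking imaginary parts, which forces $v|_\gamma=0$ and hence $v\equiv 0$ by Holmgren; existence and smoothness of $v$ then follow from Fredholm theory and standard elliptic regularity. Setting $\sigma := u-v$ on $\gamma$, I would add the exterior GRF for $u$ on $\R^d\backslash\overline{\Omega_-}$ (whose sphere-at-infinity contribution vanishes by \eqref{SRC}) to the interior extinction GRF for $v$ evaluated at exterior points; the double-layer coefficient becomes $u-v=\sigma$, and the impedance matching on $\gamma$ forces the single-layer coefficient $u_\n - v_\n$ to equal exactly $i\eta\sigma$, yielding \eqref{Hmfsrep} throughout $\R^d\backslash\overline{\Omega_-}$ and its restriction \eqref{Hmfsie} on $\pO$. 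Smoothness $\sigma\in C^\infty(\gamma)$ is inherited from that of $u$ and $v$.

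For uniqueness of $\sigma$, I would suppose $\sigma$ solves \eqref{Hmfsie} with zero right-hand side, define $w$ as the combined-field potential everywhere off $\gamma$, and note that $w$ is a radiating Helmholtz solution vanishing on $\pO$. Exterior Dirichlet uniqueness for \eqref{Hpde}--\eqref{SRC} yields $w\equiv 0$ in $\R^d\backslash\overline{\Omega}$; since $w$ also solves the Helmholtz equation throughout the annulus between $\pO$ and $\gamma$, unique continuation extends the vanishing up to the outer face of $\gamma$, so the exterior traces $w_+$ and $(\partial w/\partial\n)_+$ both vanish there. Applying the standard DLP value jump and SLP normal-derivative jump across $\gamma$ then reveals that the interior traces $w_-$ and $(\partial w/\partial\n)_-$ satisfy the same impedance relation $(\partial w/\partial\n)_- - i\eta w_- = 0$ used in the existence step, so the same energy argument forces $w\equiv 0$ in $\Omega_-$; the DLP value jump across $\gamma$ finally delivers $\sigma\equiv 0$. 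The main technical hurdle will be keeping normal-direction and jump-sign conventions consistent, so that the exterior GRF plus interior extinction formula cancel all double-layer parts and reassemble the single-layer parts with coefficient exactly $-i\eta$, and so that the uniqueness argument produces the uniqueness-enforcing sign of the impedance condition rather than its eigenvalue-laden partner.
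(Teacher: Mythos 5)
Your proposal follows essentially the same route as the paper's proof: the auxiliary interior impedance BVP on $\Omega_-$ with data $v_\n - i\eta v = u_\n - i\eta u$, addition of the exterior GRF and interior extinction formula so that the impedance matching reassembles the combined-field kernel with $\sigma=(u-v)|_\gamma$, and uniqueness via exterior Dirichlet uniqueness, unique continuation, the jump relations giving $w_-=-\sigma$, $\partial_\n w_-=-i\eta\sigma$, and interior impedance uniqueness. The only difference is that you sketch the standard energy/Holmgren argument for the impedance BVP where the paper cites the literature; both are fine.
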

\begin{proof}
  We use $u$ to denote the
  continuation of $u$ as a Helmholtz solution onto $\gamma$.
  Let $v$ solve the homogeneous
  Helmholtz impedance BVP in the interior of $\gamma$,
  with boundary data
  \be
  v_\n-i\eta v \;=\; u_\n-i\eta u \qquad \mbox{ on } \gamma~.
  \label{helmimp}
  \ee
  It is standard that
  this BVP has a unique solution for any real $k$ \cite[Sec.~8.8]{SBH19}
  (or \cite[Prop~2.1]{iti}).
  Adding the Helmholtz versions of the GRFs \eqref{GRFe}
  (which applies since $u$ is radiative \cite[Sec.~2.2]{coltonkress})
  and \eqref{GRFn},
  and adding and subtracting $i \eta (v-u)$, we get
  $$
  u(\x) = \int_\gamma \left[ G(\x,\y)[v_\n-i \eta v - (u_\n-i \eta u)
    + i\eta (v-u)] - \frac{\partial G(\x,\y)}{\partial \n_\y} (v-u) \right] ds_\y
  $$
  which, by \eqref{helmimp} simplifies to give \eqref{Hmfsrep}
  with $\sigma:= (u-v)|_\gamma$. Choosing $\x\in\pO$ shows that
  $\sigma$ solves \eqref{Hmfsie}.
  
  Uniqueness follows by similar arguments as Laplace:
  instead let $\sigma$ solve \eqref{Hmfsie} with zero RHS,
  then let $u$ be given by \eqref{Hmfsrep}. Then $u$ vanishes on
  $\pO$, so by the uniqueness of the exterior Dirichlet BVP \eqref{Hpde}--\eqref{SRC}, $u$ also vanishes throughout $\R^d\backslash\Omega$.
  Since $u$ is analytic \cite[Thm.~2.2]{coltonkress},
  by unique continuation $u$ also vanishes in $\R^d\backslash\overline{\Omega_-}$.
  By the jump relations its interior limits of $u$ on $\gamma$ are
  $u^- = -\sigma$ and $u^-_\n = -i\eta\sigma$.
  Thus $u_\n^--i\eta u^- = 0$ on $\gamma$, and by construction
  $u$ is also a Helmholtz solution in $\Omega_-$.
  By the uniqueness of the impedance BVP in $\Omega_-$, then
  $u\equiv 0$ in $\Omega_-$,
  so, again by either jump relation, $\sigma\equiv 0$.
\end{proof}

\begin{thm}[QFS robustness for exterior Stokes velocity evaluation]
  Let $(\u,p)$ solve \eqref{Spde1}--\eqref{Spde2} in $\Omega\backslash \R^d$,
  with $\u=\u_c$ on $\pO$,
  and decay condition at infinity
  $\u(\x) = \bSig \log\|\x\| + o(1)$ in $d=2$
  or $\u(\x) = o(1)$ in $d=3$
  (ie, zero constant term).
  Let $(\u,p)$ continue analytically as a Stokes solution throughout the closed
  annulus (or shell) between $\pO$ and a simple smooth interior surface $\gamma\subset\Omega$.
  Let $d=3$, or for $d=2$ let the $2\times 2$ matrix mapping $\bSig$ to $\bom$
  in \eqref{bSig} be nonsingular for both $\pO$ and $\gamma$.
  Let $G$ and $D$ be as in \eqref{SG}--\eqref{SD}.
  Then the (``completed'' $S+D$ representation) first kind integral equation
  \be
  \int_\gamma [G(\x,\y) + D(\x,\y)] \bsig(\y) ds_\y = \u_c(\x), \qquad \x\in\pO
  \label{Smfsie}
  \ee
  has a unique solution $\bsig\in C^\infty(\gamma)^d$, and
  \be
  \u(\x) = \int_\gamma [G(\x,\y) + D(\x,\y)] \bsig(\y) ds_\y~, \qquad \x\in\Omega\backslash \R^d~.
  \label{Smfsrep}
  \ee
  \label{t:qfssto}
\end{thm}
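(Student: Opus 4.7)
The plan is to follow closely the template established in the proofs of Theorems~\ref{t:qfslap} and \ref{t:qfshelm}, replacing scalar Green's identities by their Stokes analogues (Lorentz reciprocal identity) and replacing the Dirichlet or impedance auxiliary interior BVP by a Robin-type one matched to the S+D combination. First I would use the analytic continuation hypothesis to read off both the velocity $\u$ and the traction $\mathbf{t}_\u = \bsig_{\u}\n$ of the continued Stokes solution on $\gamma$. Then I would invoke the exterior Stokes GRF (valid in $\R^d\backslash\overline{\Omega_-}$ under our decay assumption, so that no Robin constant appears in $d=2$), expressing $\u$ at exterior points as an SLP of $-\mathbf{t}_\u$ plus a DLP of $\u$ over $\gamma$.

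Next, I would introduce an auxiliary Stokes solution $(\v,q)$ in the interior domain $\Omega_-$ whose boundary values on $\gamma$ are chosen so that $\mathbf{t}_\v - \v = \mathbf{t}_\u - \u$ on $\gamma$ (the Stokes analogue of the impedance condition used for Helmholtz). Writing the interior extinction GRF for $(\v,q)$ from the exterior viewpoint and adding it to the exterior GRF for $(\u,p)$, the SLP contribution combines the traction data into $\mathbf{t}_\v^- - \mathbf{t}_\u$ and the DLP contribution combines the velocity data into $\u - \v$. Adding and subtracting terms (mimicking the $\pm i\eta(v-u)$ trick in the Helmholtz proof) recasts the sum as a single integral against $(G+D)$ with density $\bsig := (\u - \v)|_\gamma$. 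Evaluating at $\x\in\pO$ yields \eqref{Smfsie} and on all of $\R^d\backslash\Omega_-$ yields \eqref{Smfsrep}. Smoothness of $\bsig$ follows from analyticity of the Stokes data on $\gamma$ together with elliptic regularity for the auxiliary BVP.

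For uniqueness, suppose $\bsig$ solves \eqref{Smfsie} with zero right-hand side and define $\u$ in $\R^d\backslash\Omega_-$ (minus $\gamma$) by \eqref{Smfsrep}. Since the $S+D$ representation has pressure vanishing at infinity and the density has net flux compatible with the zero-constant-term decay class, $\u$ lies in the subspace in which the exterior Dirichlet BVP for $\pO$ is uniquely solvable (this is where the nonsingular $2{\times}2$ matrix hypothesis for $\pO$ enters; it is the Stokes analogue of $C_\Omega\neq 1$). Hence $\u\equiv \mathbf{0}$ in $\R^d\backslash\Omega$, and by real-analyticity of Stokes solutions and unique continuation from the Cauchy data on $\pO$, also $\u\equiv\mathbf{0}$ throughout $\R^d\backslash\overline{\Omega_-}$. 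The jump relations for $S$ and $D$ at $\gamma$ then force both $\u^-$ and $\mathbf{t}_\u^-$ on $\gamma$ to be explicit constants times $\bsig$, and in particular the combination $\mathbf{t}_\u^- - \u^-$ vanishes on $\gamma$. Invoking the interior Stokes Robin BVP on $\Omega_-$ (again using the nonsingularity of the $2{\times}2$ matrix, now for $\gamma$, so that the Robin problem has no nontrivial nullspace) concludes $\u\equiv\mathbf{0}$ in $\Omega_-$; a second application of the jump relations then gives $\bsig\equiv\mathbf{0}$.

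The main obstacle I expect is the third step: proving uniqueness of the auxiliary interior Robin-type Stokes BVP under exactly the stated $2{\times}2$-matrix hypothesis on $\gamma$, and in $d=2$ tracking the additive-constant pressure and rigid-body velocity modes carefully through the GRF so that no unaccounted Robin-constant term appears. The Lorentz/energy identity gives the right inequality once the sign of the Robin coefficient is correct, but in $d=2$ one must also rule out the rigid-translation mode tied to the growth $\bSig\log r$, and this is precisely what the invertibility of the $\bSig\mapsto\bom$ map on $\gamma$ rules out. Once this Robin uniqueness is established, the remainder of the argument is a mechanical adaptation of the Helmholtz proof.
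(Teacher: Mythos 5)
Your overall architecture is the same as the paper's: exterior Stokes GRF on $\gamma$ plus an auxiliary interior Stokes solution chosen so that the single-layer contributions combine correctly, then uniqueness via exterior Dirichlet uniqueness in the zero-constant-term class (the $\pO$ hypothesis), unique continuation, jump relations on $\gamma$, and uniqueness of an interior Robin problem on $\Omega_-$. But two points in your plan are wrong in ways that matter. First, the sign of the auxiliary Robin condition: to land on the completed kernel $G+D$ you must impose $\T(\v,q)+\v=\T(\u,p)+\u$ on $\gamma$ (the Helmholtz condition $v_\n-i\eta v=u_\n-i\eta u$ with $-i\eta$ replaced by $+1$, as in \eqref{stoimp}). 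With your condition $\T(\v,q)-\v=\T(\u,p)-\u$, the add-and-subtract step in the GRF sum produces the kernel $D-G$ acting on $(\u-\v)|_\gamma$, not $G+D$, so it does not establish \eqref{Smfsie}--\eqref{Smfsrep}. The same sign is also what makes the auxiliary problem tractable: uniqueness of the interior Robin BVP comes from Green's first identity because $\int_\gamma \T(\v,q)\cdot\v\,ds = -\int_\gamma \|\v\|^2 ds \le 0$ when $\T(\v,q)+\v=\mbf{0}$; with $\T(\v,q)-\v=\mbf{0}$ the boundary term has the unfavorable sign and the energy argument gives nothing.

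Second, you have misplaced where the nonsingularity hypothesis on $\gamma$ enters. Uniqueness of the interior Robin BVP needs no hypothesis at all (Lemma~\ref{l:stoimp}): it is unconditional by the energy identity, and for an interior problem there is no $\bSig\log r$ growth or rigid-translation mode to rule out, since the Robin term itself kills rigid motions. The $\gamma$-hypothesis is needed instead for the \emph{existence} of the auxiliary interior solution $(\v,q)$ — precisely the step your outline leaves as a vague ``main obstacle.'' The paper obtains existence via the Fredholm alternative for the BIE $(S+D^T+1/2)\bphi=\f$ in \eqref{stoimpbie}: injectivity of the adjoint $S+D+1/2$ on $\gamma$ is shown by forming the exterior field $({\cal S}+{\cal D})\bpsi$ and invoking uniqueness of the modified exterior BVP with zero constant term, which in $d=2$ is exactly where nonsingularity of the $\bSig\mapsto\bom$ map for $\gamma$ is used. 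Until you supply this existence argument (or an equivalent), the existence half of your proof is incomplete; with the Robin sign corrected and this lemma in hand, the remainder of your outline coincides with the paper's proof.
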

\begin{proof}
  The proof is as for Helmholtz but with $-i\eta$ replaced by 1.
  The Green's representation formulae \eqref{GRFe}--\eqref{GRFn}
  apply for the Stokes velocity field, with
  traction data $\T(\u,p)$
  (defined, eg, in \cite[Sec.~2.3.1]{HW})
  in place of normal derivative data,
  and
  $D(\x,\y)$ from \eqref{SD} in place of the scalar kernel
  $\partial G(\x,\y)/\partial\n_\y$.
  Then let $(\v,q)$ solve the homogeneous Stokes BVP
  interior to $\gamma$, with Robin (``impedance'') data
  \be
  \T(\v,q) + \v = \T(\u,p) + \u \qquad \mbox{ on }\gamma~.
  \label{stoimp}
  \ee
  A solution exists by Lemma~\ref{l:stoimp} below.
  Adding \eqref{GRFe} (which applies since $\u$ has a zero constant term),
  and \eqref{GRFn}, and adding and subtracting $\v-\u$, we get
  $$
  \u(\x) = \int_\gamma \left[ G(\x,\y)[\T(\v,q)+\v - (\T(\u,p)+\u) - (\v-\u)]
    - D(\x,\y) (\v-\u) \right] ds_\y
  $$
  which, by \eqref{stoimp} simplifies to give \eqref{Smfsrep}
  with $\bsig:= (\u-\v)|_\gamma$. Choosing $\x\in\pO$ shows that
  $\bsig$ solves \eqref{Smfsie}.
  This completes existence.
  The uniqueness proof is similar to Laplace, apart from the following.
  One needs uniqueness for the exterior Stokes Dirichlet BVP with zero
  constant term: in $d=2$ Lemma~\ref{l:0const} (logarithmic
  capacity condition) is replaced by the nonsingularity hypothesis for $\pO$ in the theorem statement.
  The unique continuation argument relies on each component of $\u$ being analytic \cite[p.~60]{Ladyzhenskaya}.
  The rest of the proof is as for Helmholtz, replacing $-i\eta$ by 1,
  with the uniqueness
  of the interior Robin BVP assured by Lemma~\ref{l:stoimp} below.
\end{proof}


\begin{lem}[Existence and uniqueness for Stokes interior Robin BVP]
  Let $\Omega\subset\R^d$, be bounded with smooth boundary $\pO$.
  Let $\f: \pO\to\R^d$ be given smooth data. 
  Let the vector field $\v$ and scalar function $q$ solve
  in $\Omega$ the Stokes equations $-\mu \Delta \v + \nabla q = \mbf{0}$ and
  $\nabla\cdot\v = 0$, with Robin data
  $\T(\v,q) + \v = \f$.
  Then this problem has at most one solution.
  In addition, let $d=3$, or $d=2$ and let the $2\times 2$ matrix mapping
  $\bSig$ to $\bom$ in \eqref{bSig} be nonsingular,
  then it has exactly one solution.
  \label{l:stoimp}
\end{lem}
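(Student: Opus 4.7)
For uniqueness, I would take $\f\equiv\mbf{0}$ and apply the standard Stokes energy identity. Integrating $\v\cdot(-\mu\Delta\v + \nabla q) = 0$ by parts in $\Omega$ and using $\nabla\cdot\v = 0$ gives
\[
\int_\Omega 2\mu |e(\v)|^2\, d\x \;=\; \int_{\pO}\v\cdot\T(\v,q)\,ds,
\]
where $e(\v):=\tfrac{1}{2}(\nabla\v + (\nabla\v)^T)$. Substituting the Robin condition $\T(\v,q)=-\v$ on $\pO$ yields
\[
\int_\Omega 2\mu|e(\v)|^2\,d\x + \int_{\pO}|\v|^2\,ds \;=\; 0,
\]
so $e(\v)\equiv\mbf{0}$ in $\Omega$ (forcing $\v$ to be an infinitesimal rigid motion) and $\v\equiv\mbf{0}$ on $\pO$; together these give $\v\equiv\mbf{0}$ in $\Omega$. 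Then $\nabla q\equiv\mbf{0}$, so $q$ is a constant, and the Robin condition reduces to $q\n\equiv\mbf{0}$ on $\pO$, yielding $q\equiv 0$.

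\textbf{Existence via layer potentials and the Fredholm alternative.} I would represent the candidate solution as a Stokes single-layer potential, $\v=\S\bsig$ with pressure given by the associated pressure kernel, and $\bsig$ an unknown density on $\pO$. Using the interior trace and the jump relation for the traction of a single-layer, the Robin condition becomes the second-kind boundary integral equation
\[
\bigl(-\tfrac{1}{2}I + K^* + S\bigr)\bsig \;=\; \f\qquad\mbox{on }\pO,
\]
where $S$ is the Stokes single-layer boundary operator and $K^*$ is its adjoint double-layer (the traction trace of $\S\bsig$). Since $K^*$ and $S$ are compact on $L^2(\pO)^d$, this is $-\tfrac{1}{2}I + \mbox{compact}$, and the Fredholm alternative reduces existence to triviality of the null space. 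If $\bsig$ kills the left-hand side, then $\v=\S\bsig$ solves the homogeneous interior Robin BVP, hence vanishes in $\Omega$ by the first half. By continuity of the single-layer across $\pO$, the exterior trace of $\v$ on $\pO$ also vanishes, so $\v$ becomes an exterior Stokes solution with zero Dirichlet data. In $d=3$, decay of $\S\bsig$ at infinity plus uniqueness of the exterior Dirichlet Stokes BVP gives $\v\equiv\mbf{0}$ in $\Oc$, and the jump in traction across $\pO$ then forces $\bsig\equiv\mbf{0}$.

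\textbf{The 2D subtlety and main obstacle.} In $d=2$ the exterior SLP has asymptotic form $\bSig\log r + \bom + o(1)$ with $\bSig$ proportional to $\int_{\pO}\bsig$, so the exterior uniqueness step must be augmented. The hypothesis that the $\bSig\mapsto\bom$ matrix $M$ for $\pO$ is nonsingular is exactly what is needed: the space of exterior Stokes solutions with zero Dirichlet data on $\pO$ is two-dimensional, parameterized by $\bSig$ with determined constant term $\bom=M\bSig$; on the other hand, the single-layer itself produces a specific $\bom$ as an explicit linear functional of $\bsig$, readable from the far-field expansion of $G$. Nonsingularity of $M$ will be used to force these two constraints to be incompatible unless $\bSig=\mbf{0}$, at which point standard 2D exterior uniqueness gives $\v\equiv\mbf{0}$ in $\Oc$, and the jump relation yields $\bsig\equiv\mbf{0}$ as before. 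The principal obstacle I anticipate is making this last step rigorous: identifying the SLP-induced $\bom$ functional and showing that nonsingularity of $M$ truly forces $\int_{\pO}\bsig=\mbf{0}$, together with verifying the required compactness properties of $S$ and $K^*$ in 2D, where $S$ has only a logarithmic singularity and the analysis is more delicate than in 3D.
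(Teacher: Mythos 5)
Your uniqueness half is essentially the paper's argument: the same Green's identity $\frac{\mu}{2}\int_\Omega\|\nabla\v+\nabla\v^T\|_F^2 = \int_\pO \T(\v,q)\cdot\v\,ds$ with the homogeneous Robin condition making the right side nonpositive; your extra care about rigid motions and about recovering $q\equiv 0$ from $\T(\v,q)=-q\n$ on $\pO$ is a welcome detail the paper leaves implicit (and you in fact need $q\equiv0$ later). For existence the framework is the same (SLP ansatz, second-kind BIE, Fredholm alternative), but the route differs: you argue injectivity of the Robin operator itself, whereas the paper argues injectivity of its \emph{adjoint} $(S+D+\tfrac12)$, reinterpreting an adjoint null vector $\bpsi$ as the density of the completed exterior representation $\w=(\S+\D)\bpsi$, which has zero exterior Dirichlet data and zero constant term; the nonsingularity of the $2\times2$ matrix for $\pO$ then kills $\w$ outright, and the interior Robin uniqueness already proved, plus jump relations, give $\bpsi\equiv\mbf{0}$. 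The paper's detour through the adjoint is precisely what lets the $d=2$ hypothesis enter painlessly, because the completed representation has no constant term by construction. (A minor point: with the paper's outward-normal convention the interior traction trace of the SLP is $(+\tfrac12 I + K^\ast)\bsig$, so your BIE should carry $+\tfrac12$, matching the paper's $(S+D^T+\tfrac12)$; this does not affect your argument since you reason directly from the PDE interpretation.)

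The genuine gap is the $d=2$ step you yourself flag: you never close the argument that a null density of the direct operator must vanish. It can be closed along the lines you sketch, but the missing ingredient is the fact (asserted in Section~\ref{s:thHS}) that the exterior Stokes SLP has zero constant term $\bom=\mbf{0}$. With that, your exterior solution $\v=\S\bsig$ has zero Dirichlet data on $\pO$, so its constant term satisfies $\bom=M\bSig$, while the representation forces $\bom=\mbf{0}$; nonsingularity of $M$ gives $\bSig=\mbf{0}$, and uniqueness of the standard exterior BVP (zero data, $\bSig=\mbf{0}$) gives $\v\equiv\mbf{0}$ in $\Oc$. One more care point, in both $d=2$ and $d=3$: after the velocity vanishes on both sides, the traction jump gives $\bsig$ equal to (a multiple of) the interior pressure constant times $\n$, not zero outright; you must use that the interior Robin uniqueness yields $q\equiv 0$ (which your proof does) and that the exterior pressure, being constant and decaying, also vanishes, before concluding $\bsig\equiv\mbf{0}$. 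Your worry about compactness of $S$ and $K^\ast$ in 2D is not a real obstacle: on a smooth curve these kernels are weakly singular (log) or smooth, so compactness on $C(\pO)^2$ or $L^2(\pO)^2$ is standard.
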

\begin{proof}
  Uniqueness follows
  easily as in \cite[p.~83]{hsiao85} \cite[p.51]{manasthesis}.
  One uses $(\v,q)$ for both the solution pairs in
  Green's 1st identity \cite[p.~53]{Ladyzhenskaya} to get
  $$
  \frac{\mu}{2}\int_\Omega \| \nabla \v + \nabla \v^T \|_F^2 = \int_\pO \T(\v,q)\cdot \v\, ds~,
  $$
  where $F$ indicates the Frobenius norm of the $d\times d$ tensor.
  Applying the Robin condition with $\f\equiv\mbf{0}$ shows that
  the right-hand side is non-positive, so that both vanish, so that
  $\v\equiv\mbf{0}$.
  For existence, suppose that $\bphi \in C(\pO)^d$ solves the BIE
  \be
  (S + D^T + 1/2)\bphi \;=\; \f
  \label{stoimpbie}
  \ee
  where $D^T$ is the adjoint double-layer
  operator. Then $\v = {\cal S}\bphi$ solves the Stokes equations in
  $\Omega$ with the correct Robin data following from the jump relations,
  thus is a solution.
  By the Fredholm alternative, to prove existence for \eqref{stoimpbie},
  one may prove uniqueness for the adjoint BIE
  $(S + D + 1/2)\bpsi = \mbf{0}$.
  This is already known in $d=3$ \cite[Thm~2.1]{hebeker}.
  In $d=2$ the constant term again rears its ugly head \cite{hsiao85}, but
  given the hypothesis we prove uniqueness as follows.
  Let $\bpsi$ solve the homogeneous adjoint BIE, then construct
  $\w = ({\cal S} + {\cal D})\bpsi$
 and $r$ the corresponding pressure representation,
  which solve the modified exterior
  BVP \eqref{Spde1}--\eqref{bSig} with given $\bom=\mbf{0}$, but
  $\bSig$ arbitrary.
  By the $2\times 2$
  matrix nonsingularity hypothesis the exterior
  solution is unique, hence trivial.
  By the jump relations on $\pO$, the interior limits are $\w^{-} = \bpsi$
  and $\T(\w,r)^{-} = -\bpsi$, so that $(\w,r)$ solves the interior
  Robin BVP with zero data $\T(\w,r)^{-}+\w^- = \mbf{0}$. By uniqueness
  proved above, the solution is identically zero, so again by the
  jump relations, $\bpsi\equiv\mbf{0}$.
\end{proof}

We suspect that there is a way to remove the
above Stokes domain nonsingularity condition
in $d=2$, perhaps following \cite{hsiao85}.

\section{Geometry generation for large-scale 2D examples}
\label{a:geometry}

Here we present an algorithm to generate $K$ simple polar-Fourier shapes
$\pO_i$, $i=1,\dots,K$, located at randomly-generated centers, that obey
the distance and variation criteria of Sec.~\ref{section:large_scale:geometry}.
Its inputs are $\ddist$, a body radius scale $r_0$,
and a routine {\tt randomcenter} that returns fresh centers $\c$.

First we make a list of centers $\{\c_1,\dots,\c_K\}$ that are far enough apart. Starting with the empty list,
\ben
\item Generate a new candidate center $\c$ via {\tt randomcenter},
\item Append $\c$ to the list if $\c$ has distance at least $2r_0$ from
  all $\c_i$ in the list,
\item Repeat 1-2 until the list has $K$ centers.
  \een
  The body $\pO_i$ is now chosen from the star-shaped family defined
  about the center $\c_i$ by the polar parameterization
  $r(t)=r_0(1+a\cos(ft + \phi))$, where $a$ is the ``wobble'' amplitude, $f$ its frequency, and $\phi$ its rotation.
For example \Cref{f:setup}(a) shows $\c=\mbf{0}$,
$r_0=1$, $a=0.3$, $f=5$, $\phi=0.2$.
$f$ is drawn randomly from $\{3,4,\dots,7\}$ with
a distribution function $\{16/31, 8/31, 4/31, 2/31, 1/31\}$,
to include higher frequencies less often.
The amplitude $a$ is uniform random in $[0,0.3(3/f)^{3/2}]$.
Thus higher frequencies will tend to have smaller amplitudes, in order to prevent any single boundary from dominating the resolution requirements.
$\phi$ is uniform random in $[0,2\pi)$.
  Since the maximum radius of a body is currently $(1+a)r_0$,
  intersections are possible,
  and there may not be any bodies that are $\approx\ddist$ apart.
  Thus we use the following to adjust all body radii:
\begin{enumerate}
  \item All geometries are rescaled so that the farthest distance from center to boundary is $r_0+\ddist/2$. At this point, no boundaries can intersect and all boundaries must be separated by at least $\ddist$.
  \item 10\% of the geometries are chosen at random, and for each chosen geometry:
  \begin{enumerate}
    \item Denote the current maximum radius of the geometry by $R$.
    \item (\emph{expansion}) The geometry is rescaled to increase its radius by $\ddist$, and minimal separation distances between the geometry and all its nearest neighbors are computed.
    \item Step (b) is repeated until either the geometries current radius is $>1.5R$, or the geometry is separated from a nearest neighbor by $<\ddist$.
    \item If the prior step is terminated because the geometry is $<\ddist$ from a nearest neighbor, proceed to the next step; otherwise handling for this geometry is finished.
    \item (\emph{rescue}) The geometry is rescaled to decrease its radius by $\ddist/10$, and separation distances between the geometry and all its nearest neighbors are computed.
    \item Step (e) is repeated until the radius of the geometry is between $(\ddist, 1.1\ddist]$.
  \end{enumerate}
  \item Step 2 is repeated three times.
  \item For speed, the prior items are computed using approximate methods (distances are computed pointwise over barely-resolved boundaries), and in rare instances boundaries may be closer together than $\ddist$. A final \emph{rescue} step is performed for every boundary with upsampled boundaries and using full Newton iterations to compute the minimal distances.
\end{enumerate}
Although elaborate, this process allows us to efficiently place $K$ polydisperse boundaries in a specified manner throughout a domain, with a separation no less than $\ddist$ between the individual boundaries. When the initial set of boundaries are packed sufficiently tightly, the \emph{expansion} steps always produce at least some boundaries whose expansion is terminated because they are too close to others;
thus, due to how the {\em rescue} stage is implemented, there will always be some close pairs of boundaries separated by between $\ddist$ and $1.1\ddist$.

\bibliography{refs}
\bibliographystyle{abbrv}

\end{document}